\theoremstyle{plain}      
\newtheorem{theorem}{Theorem}[section]           
\newtheorem{lemma}{Lemma}[section]      
\newtheorem{corollary}{Corollary}[section]      
\newtheorem{proposition}{Proposition}[section]      
\newtheorem{conjecture}{Conjecture}[section]
\theoremstyle{remark}      
\newtheorem{remark}{Remark}[section]
\newtheorem{example}{Example}[section]
\newcommand{\Q}{{\mathbb{Q}}}        
\newcommand{\Z}{{\mathbb{Z}}}   
\newcommand{\C}{{\mathbb{C}}}      
\newcommand{\R}{{\mathbb{R}}}
 \newcommand{\M}{{\mathcal{M}}}
\newcommand{\ldual}[1]{#1^{*}}
\newcommand{\X}{\mathcal X}
\newcommand{\A}{\mathfrak A}
\newcommand{\G}{\mathfrak G}
\begin{document}

\date{\today}

\title{Torus bundles not distinguished by TQFT 
invariants}
    
 \author[L.Funar]{Louis Funar\\
\vspace{0.5cm}
\em with an appendix by Louis Funar and Andrei Rapinchuk}     
\address{Institut Fourier BP 74, UMR 5582, 
University of Grenoble I, 
38402 Saint-Martin-d'H\`eres cedex, France}      
\email{funar@fourier.ujf-grenoble.fr}  

\address{
University of Virginia Department of Mathematics
141 Cabell Drive, Kerchof Hall, PO Box 400137,
Charlottesville, VA 22904-4137, USA}
\email{asr3x@virginia.edu}

\begin{abstract}
We show that there exist infinitely many 
pairs of non-homeomorphic closed oriented SOL torus bundles with the same
quantum (TQFT)  invariants. 
This follows from the arithmetic behind the conjugacy problem in
$SL(2,\Z)$ and its congruence quotients, the classification of SOL 
(polycyclic) 3-manifold groups and an elementary study of a 
family of Pell equations. A key ingredient is the congruence 
subgroup property of modular representations, 
as it was established by Coste and Gannon, Bantay, Xu for 
various versions of TQFT, and lastly by Ng and Schauenburg for the 
Drinfeld doubles of spherical fusion categories. 
On the other side we prove that two torus bundles over the circle 
with the same quantum invariants are (strongly) commensurable.
The examples above show that this is the best that it could be expected.

\vspace{0.1cm}
\noindent {\bf 2000 MSC Classification}: 57 M 07, 20 F 36, 20 F 38, 57 N 05.  
 
\noindent {\bf Keywords}:  Mapping class group, torus bundle, modular 
tensor category,  congruence subgroup, $SL(2,\Z)$, conjugacy problem, 
Pell equation,  rational conformal field theory.

\end{abstract}

\maketitle

\section{Introduction and statements}

Two fundamental constructions of TQFTs are due to Reshetikhin-Turaev 
(see \cite{RT}), using link invariants and quantum groups, 
and to Turaev-Viro (\cite{TuV}), using quantum 6j-symbols. 
The Reshetikhin-Turaev method was further extended in \cite{Tu} to a 
a very general construction of TQFTs, whose input is a modular tensor 
category, namely an algebraic structure  
which seems to be the most general data 
needed for building invariants of arbitrary 
closed 3-manifolds.

If $\mathcal A$ is such a modular tensor category (see \cite{Tu}) we 
denote by $RT_{\mathcal A}$ the Reshetikhin-Turaev 
TQFT invariant of 3-manifolds constructed out of the category ${\mathcal A}$. 
In the particular case when the modular tensor category $\mathcal A$ is 
the Drinfeld double  $D(\mathcal C)$ of a spherical fusion category 
(also called the center of $\mathcal C$) the associated invariant $RT_{D(\mathcal C)}$ will be denoted as 
$TV_{\mathcal C}$ and it will be called  
the Turaev-Viro TQFT invariant of 3-manifolds 
associated to $\mathcal C$. If ${\mathcal C}$ were itself a modular tensor 
category then $RT_{D(\mathcal C)}$ would indeed coincide with 
the usual Turaev-Viro invariants $|M|_{\mathcal C}$ constructed out 
of ${\mathcal C}$ by intrinsic methods (see \cite{Tu}, section V). 
We chose to single out this  
family of TQFT invariants because they are somewhat easier 
to handle than the more general Reshetikhin-Turaev invariants, as 
they lead to anomaly-free TQFTs. 
Observe also that spherical fusion categories are more general 
than modular tensor categories although their Drinfeld doubles account  
only for part of the anomaly-free modular tensor categories.

According to M\"uger's results (see \cite{Mu}) 
the Drinfeld double $D(\mathcal C)$ of a spherical fusion 
category $\mathcal C$ is a modular tensor category. 
As a matter of terminology, the Turaev-Viro invariants $TV_{\mathcal C}$ should 
not be confused 
with the Turaev-Viro-Barrett-Westbury invariant $|M|_{\mathcal C}$, 
which extends the intrinsic state-sum definition  of a 3-manifold 
invariant associated to an arbitrary spherical fusion 
category ${\mathcal C}$ (see \cite{BW}). 
Nevertheless this source of confusion is not relevant, as 
Turaev and Virelizier proved recently (see \cite{TV}) that  the  
Turaev-Viro-Barrett-Westbury invariant 
$|M|_{\mathcal C}$  actually coincides with 
$RT_{D(\mathcal C)}(M)$, for any spherical 
fusion category ${\mathcal C}$ of non-zero dimension.  
Notice that, according to (\cite{ENO}, Thm.2.3) all spherical fusion categories 
over $\C$ have non-zero dimension. 
All fusion categories considered here will be $\C$-linear categories, unless 
the opposite is explicitly stated.

A natural question in the area is to what extent the collection 
of all these 3-manifolds invariants determine the topology 
of the manifolds. The aim of this article is to solve this 
question for a particular class of 3-manifolds, namely the 
SOL manifolds. 

Every closed SOL manifold 
has a finite cover of degree at most $8$ which is a torus bundle over 
a circle. Given $A\in SL(2,\Z)$ we denote 
by $M_A$ the torus bundle over the circle 
whose monodromy is given by the matrix $A$. 
It is well-known that the manifold has geometry SOL if and only if $A$ 
is hyperbolic (or Anosov). 

The first result of this paper is the following:

\begin{theorem}\label{TVagree}
There exist infinitely many pairs of Anosov matrices $A$, $B$ such that 
$M_A$ and $M_B$ have non-isomorphic fundamental groups 
although for every spherical fusion category 
${\mathcal C}$ their Turaev-Viro invariants agree: 
\begin{equation} 
TV_{\mathcal C}(M_A)=TV_{\mathcal C}(M_B)
\end{equation} 
The simplest  series of examples is the following:  
\begin{equation}
A=\left(\begin{array}{cc}
1 & kq^2 \\
kv & 1+k^2q^2v \\
\end{array}
\right), \, B=\left(\begin{array}{cc}
1 & k \\
kvq^2 & 1+k^2q^2v \\
\end{array}\right)
\end{equation}
where $k\in\Z$, $k\neq 0$,  
$q$ is an odd prime number  $q\equiv 1({\rm mod}\, 4)$,  
$v$ is a positive integer such that $-v$ is a non-zero quadratic residue 
mod $q$ and $v$ is  divisible either 
by a prime $p$ satisfying $p\equiv 3({\rm mod}\, 4)$, or by $4$.
\end{theorem}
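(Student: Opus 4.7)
The plan is to combine two structural reductions with an explicit arithmetic analysis of the family in the statement.

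\emph{Step 1: reduction to a conjugacy question.} First I would invoke the classification of SOL $3$-manifold groups to note that two hyperbolic torus bundles $M_A$ and $M_B$ have isomorphic fundamental groups if and only if $A$ is $GL(2,\Z)$-conjugate to $B^{\pm 1}$. On the TQFT side, $TV_{\mathcal C}(M_A)$ equals the trace of $\rho_{\mathcal C}(A)$, where $\rho_{\mathcal C}:SL(2,\Z)\to GL(V_{\mathcal C})$ is the action of the mapping class group of the torus on the Turaev--Viro state space; by the congruence subgroup property of Ng--Schauenburg this representation factors through a finite quotient $SL(2,\Z/N_{\mathcal C}\Z)$. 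Thus the theorem reduces to producing infinitely many pairs of Anosov matrices that are conjugate in every congruence quotient $SL(2,\Z/N\Z)$ yet not conjugate up to inversion in $GL(2,\Z)$.

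\emph{Step 2: local conjugacy for the explicit family.} Both matrices satisfy $\mathrm{tr}(A)=\mathrm{tr}(B)=2+k^2q^2v>2$ and $\det=1$, so they are Anosov. At each prime $p\neq q$ the element $q$ is a unit in $\Z_p$ and conjugation by $\mathrm{diag}(q^{-1},q)\in SL(2,\Z_p)$ carries $A$ to $B$ by a direct computation. At $p=q$ the arithmetic hypotheses are calibrated to produce a local conjugator: since $q\equiv 1\pmod 4$ and $-v$ is a nonzero quadratic residue modulo $q$, one has $-1/v\in(\Z_q^\times)^2$, and choosing $y\in\Z_q$ with $vy^2=-1$ the matrix
\[
P=\begin{pmatrix}-kvy & y \\ vy & 0\end{pmatrix}\in SL(2,\Z_q)
\]
satisfies $PAP^{-1}=B$. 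By the Chinese Remainder Theorem $A$ and $B$ become conjugate in every $SL(2,\Z/N\Z)$, and hence $TV_{\mathcal C}(M_A)=TV_{\mathcal C}(M_B)$ for every spherical fusion category $\mathcal C$.

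\emph{Step 3: non-conjugacy over $\Z$, the main obstacle.} The hard part is to show $A\not\sim B^{\pm 1}$ in $GL(2,\Z)$. I would write $PA=B^{\pm 1}P$ with $P=\begin{pmatrix}x & y \\ z & w\end{pmatrix}$ and solve the resulting linear system to express $z,w$ as $\Z$-linear combinations of $x,y$; imposing $\det P=\pm 1$ then reduces, for both the $B$ and the $B^{-1}$ cases, to a single binary form equation
\[
q^2 x^2+kvq^2 xy-vy^2=\pm 1.
\]
The $-1$ case is dispatched at once by reducing modulo $4$ (when $4\mid v$) or modulo a prime $p\equiv 3\pmod 4$ dividing $v$: the equation then becomes $q^2x^2\equiv -1$, which is insoluble in the given modulus. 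The $+1$ case is the essential obstacle and is the content of the appendix. Substituting $X=qx$ converts it into the Pell-type equation $X^2-DY^2=4$, with $D=v(k^2vq^2+4)$, subject to the constraint $q\mid X$; an elementary study of the units of the real quadratic order $\Z[\sqrt{D}]$, combined with the divisibility conditions on $v$ to pin down their residues modulo $q$, excludes any such solution. Finally, since $k$ may be any nonzero integer and distinct values of $|k|$ give distinct traces $2+k^2q^2v$, the construction yields infinitely many non-homeomorphic pairs, as required.
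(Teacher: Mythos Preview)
Your approach matches the paper's: the same three-step reduction (classification of torus-bundle groups, the Ng--Schauenburg congruence property for the modular representation attached to $D(\mathcal C)$, and the arithmetic analysis of the explicit family via the binary form $q^2x^2+kq^2vxy-vy^2=\pm 1$, which governs both $A\sim B$ and $A\sim B^{-1}$). Your Step~2 is in fact slightly cleaner than the paper's: the paper solves the congruence $q^2x^2+kq^2vxy-vy^2\equiv 1\pmod{N}$ directly, whereas you exhibit explicit conjugators in $SL(2,\Z_p)$.

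Two small corrections in Step~3. First, the substitution $X=qx$ does not by itself produce a Pell equation; one must complete the square, obtaining $(2qx+kqvy)^2-v(k^2q^2v+4)\,y^2=4$ (with a further simplification when $k$ is even). The relevant obstruction is then a divisibility condition on $y$, not on $X$: the paper computes the fundamental solution explicitly---for instance $(u_0,y_0)=(k^2q^2v+2,\,kq)$ when $k$ is odd---observes that $q\mid y_0$, and deduces $q\mid y$ for every solution via the standard recurrence; plugging $q\mid y$ back into $q^2x^2+kq^2vxy-vy^2=1$ then gives $q^2\mid 1$, a contradiction. Second, the divisibility hypotheses on $v$ (a prime factor $p\equiv 3\pmod 4$, or $4\mid v$) are used \emph{only} in the $-1$ case, exactly as you describe; the $+1$ case uses instead the hypothesis $v>0$, which is what makes the continued-fraction/Pell analysis give the fundamental solution above. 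With these adjustments your outline is complete.
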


\begin{remark}
Notice that the manifolds $M_A$ and $M_B$ are 
prime SOL manifolds. 
\end{remark}

As an immediate consequence we obtain a negative answer to 
a question due to Turaev (see \cite{Tu}, Problem 5, p.571).

\begin{corollary}\label{RTagreeweak}
There exist infinitely many pairs of matrices $A$ and $B$ as in Theorem \ref{TVagree} 
such that $M_A\#\overline{M_A}$ and  $M_B\#\overline{M_B}$
have non-isomorphic fundamental groups but for every modular tensor 
category  ${\mathcal C}$ their 
Reshetikhin-Turaev TQFT invariants  agree: 
\begin{equation} 
RT_{\mathcal C}(M_A\#\overline{M_A})=RT_{\mathcal C}(M_B\#\overline{M_B})
\end{equation} 
Here $\overline{M}$ denotes the manifold $M$ with the reversed 
orientation. 
\end{corollary}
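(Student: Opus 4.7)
The plan is to reduce the equality of the Reshetikhin-Turaev invariants of the connected sums to the equality of Turaev-Viro invariants of $M_A$ and $M_B$ already furnished by Theorem~\ref{TVagree}. The key input is M\"uger's theorem: for any modular tensor category $\mathcal{C}$ there is a braided equivalence $D(\mathcal{C})\simeq \mathcal{C}\boxtimes\overline{\mathcal{C}}$, where $\overline{\mathcal{C}}$ denotes the mirror (reverse-braided) category. Combined with the orientation-reversal identity $RT_{\mathcal{C}}(\overline{M})=RT_{\overline{\mathcal{C}}}(M)$ and the definition $TV_{\mathcal{C}}=RT_{D(\mathcal{C})}$, this gives
\[
TV_{\mathcal{C}}(M)=RT_{\mathcal{C}}(M)\cdot RT_{\mathcal{C}}(\overline{M}).
\]
The standard connected-sum formula $RT_{\mathcal{C}}(M_1\#M_2)\cdot RT_{\mathcal{C}}(S^3)=RT_{\mathcal{C}}(M_1)\cdot RT_{\mathcal{C}}(M_2)$ then yields
\[
RT_{\mathcal{C}}(M\#\overline{M})=\frac{TV_{\mathcal{C}}(M)}{RT_{\mathcal{C}}(S^3)}.
\]

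Since every modular tensor category is in particular a spherical fusion category, Theorem~\ref{TVagree} applies to any modular $\mathcal{C}$ and produces pairs $(M_A,M_B)$ with $TV_{\mathcal{C}}(M_A)=TV_{\mathcal{C}}(M_B)$. Substituting into the identity above immediately gives $RT_{\mathcal{C}}(M_A\#\overline{M_A})=RT_{\mathcal{C}}(M_B\#\overline{M_B})$ for every modular tensor category $\mathcal{C}$, which is the claimed TQFT equality.

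It remains to show $\pi_1(M_A\#\overline{M_A})\not\cong\pi_1(M_B\#\overline{M_B})$. Orientation reversal does not affect the fundamental group, so by van Kampen $\pi_1(M\#\overline{M})\cong\pi_1(M)*\pi_1(M)$. The groups $\pi_1(M_A)$ and $\pi_1(M_B)$ are polycyclic of Hirsch length $3$ (torus bundles with Anosov monodromy are closed aspherical SOL manifolds); they are infinite amenable and surject onto $\Z$, so they are neither nontrivial free products nor isomorphic to $\Z/2*\Z/2$, and in particular they are freely indecomposable and not infinite cyclic. Grushko's uniqueness theorem for free-product decompositions into freely indecomposable factors therefore shows that an isomorphism $\pi_1(M_A)*\pi_1(M_A)\cong\pi_1(M_B)*\pi_1(M_B)$ would force $\pi_1(M_A)\cong\pi_1(M_B)$, contradicting Theorem~\ref{TVagree}. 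The only delicate point is bookkeeping: one must use compatible normalization conventions in the connected-sum formula, in the definition of $\overline{\mathcal{C}}$, and in M\"uger's decomposition, so that $TV_{\mathcal{C}}(M)=RT_{\mathcal{C}}(M)\cdot RT_{\mathcal{C}}(\overline{M})$ holds as an equality (rather than up to a global scalar); once this is confirmed the corollary is immediate from Theorem~\ref{TVagree}.
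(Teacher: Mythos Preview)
Your proof is correct and follows essentially the same route as the paper: the paper packages the identity $RT_{\mathcal C}(M\#\overline{M})=TV_{\mathcal C}(M)$ as Proposition~\ref{RTV} (proved via the same ingredients you list---multiplicativity under connected sum, $RT_{\mathcal C}(\overline{M})=RT_{\overline{\mathcal C}}(M)$, and the identification $\rho_{D(\mathcal C)}\cong\rho_{\mathcal C}\otimes\widetilde{\rho_{\mathcal C}}$), then invokes uniqueness of prime decomposition (Milnor/Stallings) for the fundamental-group statement, exactly as you do via Grushko. Your extra $RT_{\mathcal C}(S^3)$ factor reflects a different normalization convention but is harmless since it depends only on $\mathcal C$; you correctly flagged this bookkeeping point.
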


\begin{proof}
This follows from the fact that 
\begin{equation}
RT_{\mathcal C}(M_A\#\overline{M_A})=TV_{D(\mathcal C)}(M_A)
\end{equation}
according to Proposition \ref{RTV}. 
Moreover prime decomposition of 3-manifolds, as well as splittings 
of groups as free amalgamated products are unique 
by classical results of Milnor and Stallings. 
Therefore the fundamental groups are non-isomorphic since their 
factors are not isomorphic.  
\end{proof}

We will show later (see Theorem \ref{RTagree}) that 
there are also examples of pairs of prime manifolds, 
but we cannot provide yet infinite families.

Recall now that a quotient of $SL(2,\Z)$ is a 
{\em congruence quotient} if it is of the form $SL(2,\Z/m\Z)$ 
for some non-zero integer $m$. 
The key steps in the proof of Theorem \ref{TVagree} 
are the following. We will prove first: 

\begin{proposition}\label{trace}
If $M_A$ and $M_B$ are torus bundle as above then 
$TV_{\mathcal C}(M_A)=TV_{\mathcal C}(M_B)$ for any spherical fusion category 
provided that  the matrices 
$A$ and $B$ are conjugate in every congruence quotient of $SL(2,\Z)$. 
\end{proposition}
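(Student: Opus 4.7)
The plan is to compute $TV_{\mathcal{C}}(M_A)$ as the trace of a matrix $\rho_{\mathcal{C}}(A)$ coming from the natural modular representation of $SL(2,\Z)$ on the TQFT vector space of the torus, and then to appeal to the congruence subgroup property of Ng and Schauenburg to conclude that this trace only depends on the image of $A$ in some finite congruence quotient $SL(2,\Z/N\Z)$.

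First I would use the standard TQFT gluing formula. Viewing $M_A$ as the closure of $T^2\times[0,1]$ by the monodromy diffeomorphism $A$, the axioms of an anomaly-free TQFT yield
\begin{equation*}
TV_{\mathcal{C}}(M_A)=\Tr\bigl(\rho_{\mathcal{C}}(A)\bigr),
\end{equation*}
where $\rho_{\mathcal{C}}:SL(2,\Z)\to GL\bigl(V_{D(\mathcal{C})}(T^2)\bigr)$ is the action of the mapping class group of the torus on the finite-dimensional vector space assigned by $RT_{D(\mathcal{C})}=TV_{\mathcal{C}}$ to $T^2$. The identification $TV_{\mathcal{C}}=RT_{D(\mathcal{C})}$ used here is precisely what makes the invariant anomaly-free and ensures that $\rho_{\mathcal{C}}$ is a genuine linear representation, not merely a projective one, so that the trace on the right-hand side is well defined as a complex number.

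Next I would invoke the Ng-Schauenburg theorem cited in the introduction, which asserts that for the Drinfeld double $D(\mathcal{C})$ of any spherical fusion category the kernel of $\rho_{\mathcal{C}}$ contains a principal congruence subgroup $\Gamma(N)\subset SL(2,\Z)$ for some $N\geq 1$. Hence $\rho_{\mathcal{C}}$ factors through the finite quotient $SL(2,\Z/N\Z)$. By hypothesis $A$ and $B$ are conjugate in every congruence quotient of $SL(2,\Z)$ and in particular in $SL(2,\Z/N\Z)$, so $\rho_{\mathcal{C}}(A)$ and $\rho_{\mathcal{C}}(B)$ are conjugate matrices; taking traces gives
\begin{equation*}
TV_{\mathcal{C}}(M_A)=\Tr\rho_{\mathcal{C}}(A)=\Tr\rho_{\mathcal{C}}(B)=TV_{\mathcal{C}}(M_B).
\end{equation*}

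The main obstacle is not really in the argument, which is short once its ingredients are in place, but in invoking the correct form of the congruence subgroup property: one needs that for the specific Drinfeld-double representation at hand a finite level $N$ exists, so that the factorization through $SL(2,\Z/N\Z)$ is genuine. Once this is granted the trace formula for mapping tori and the conjugation step are both routine, and the proposition follows.
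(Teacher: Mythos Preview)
Your proposal is correct and follows essentially the same route as the paper: the paper establishes that the Drinfeld double yields an anomaly-free TQFT (so the modular representation is genuinely linear), proves the trace formula $TV_{\mathcal C}(M_A)=\Tr(\rho_{D(\mathcal C)}^{\lambda,1}(A))$ for mapping tori, invokes the Ng--Schauenburg congruence theorem to factor $\rho$ through some $SL(2,\Z/N\Z)$, and then uses conjugacy mod $N$ to equate the traces. The only cosmetic difference is that the paper lifts the conjugating element back to $SL(2,\Z)$ via the surjectivity of $SL(2,\Z)\to SL(2,\Z/N\Z)$, whereas you conjugate directly in the quotient; both are fine.
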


\begin{remark}
It seems plausible that  
$RT_{\mathcal A}(M_A)=RT_{\mathcal A}(M_B)$ for every modular tensor category ${\mathcal A}$ whose TQFT is anomaly-free if and only if 
$A$ and $B$ are conjugate in every congruence quotient of $SL(2,\Z)$.
\end{remark}

Lackenby was the first to observe in \cite{L} that 
quantum $SU(2)$-invariants behave well 
with respect to modular transformations from congruence subgroups. 
Specifically he defined the $f$-congruence of manifolds, for $f\in \Z_+\setminus\{0,1\}$,  as follows. Two closed 3-manifolds are 
$f$-congruent if they can be obtained by Dehn surgeries on framed links 
related by a sequence of moves which consists in Kirby moves and changes 
of the framings by adding integral multiples of $f$.  
This was further explored  and refined (to weak and strong $f$-congruence) 
by Gilmer in \cite{Gil} where it was shown that 
quantum invariants are natural obstructions to the $f$-congruence of 
given 3-manifolds. 

We will say that two closed 3-manifolds are {\em congruent} if 
they are $f$-congruent, for {\em every} 
integral $f$.  Therefore the meaning of our Proposition \ref{trace} is that 
the torus bundles $M_A$ and $M_B$ are congruent.

Now there exists an explicit classification of the manifolds of 
the form $M_A$. For the sake of simplicity we will restrict ourselves 
to Anosov matrices $A,B$. In this case $M_A$ is a SOL manifold and it is 
easy to see that it is Haken since the fiber is incompressible. 
Therefore it suffices to understand its  
fundamental group, which is the polycyclic group $\Gamma_A$ with the 
presentation:
\begin{equation} 
\Gamma_A=\langle t,a,b | ab=ba, tat^{-1}=a^{\alpha_{11}}b^{\alpha_{12}}, 
tbt^{-1}=a^{\alpha_{21}}b^{\alpha_{22}}\rangle 
\end{equation} 
where 
$A=\left(\begin{array}{cc}
\alpha_{11} & \alpha_{12} \\
\alpha_{21} & \alpha_{22} \\
\end{array}
\right)$. 
We have then the following: 

\begin{proposition}\label{classif}
Let $A$ and $B$ be matrices from $SL(2,\Z)$ whose traces are different from 
$2$. Then the groups $\Gamma_A$ and $\Gamma_B$ are isomorphic if and only if 
$A$ is conjugate to either $B$ or to $B^{-1}$ within $GL(2,\Z)$. 
\end{proposition}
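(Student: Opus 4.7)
The plan is to treat the two directions separately. The easy direction is that if $A = P B^\varepsilon P^{-1}$ for some $P \in GL(2,\Z)$ and $\varepsilon \in \{\pm 1\}$, then sending $t \mapsto t^\varepsilon$ and acting by $P^{-1}$ on the subgroup $\Z^2 = \langle a,b\rangle$ defines an isomorphism $\Gamma_A \to \Gamma_B$; this amounts to a one-line verification of the defining semidirect-product relations.

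For the converse, the key move is to show that the normal subgroup $\Z^2 \subset \Gamma_A$ is \emph{characteristic}. Granted this, any isomorphism $\phi: \Gamma_A \to \Gamma_B$ must restrict on $\Z^2$ to a matrix $P \in GL(2,\Z)$ and must descend to an automorphism of $\Gamma_\bullet/\Z^2 \cong \Z$, i.e.\ to multiplication by some $\varepsilon \in \{\pm 1\}$. Hence $\phi(t) = t^\varepsilon w$ for some $w \in \Z^2 \subset \Gamma_B$, and applying $\phi$ to the defining relation $tvt^{-1} = Av$ together with commutativity of $\Z^2$ inside $\Gamma_B$ yields $B^\varepsilon P v = PA v$ for every $v \in \Z^2$, i.e.\ $A = P^{-1} B^\varepsilon P$.

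My proposed characterization is the isolator identity
\[
\Z^2 \;=\; \sqrt{[\Gamma_A,\Gamma_A]} \;:=\; \{x \in \Gamma_A \mid x^n \in [\Gamma_A,\Gamma_A] \text{ for some } n \geq 1\},
\]
a set manifestly preserved by every isomorphism of groups. A direct computation gives $[\Gamma_A,\Gamma_A] = (A-I)\Z^2$, since the only nontrivial commutators are of the form $[t^m,v] = (A^m - I)v$ for $v \in \Z^2$, and $(A-I)\Z^2$ is already $A$-stable and absorbs every $(A^m - I)\Z^2$ via the identity $A^m - I = (A-I)(I + A + \cdots + A^{m-1})$. The hypothesis $\Tr(A) \neq 2$ is precisely $\det(A-I) = 2 - \Tr(A) \neq 0$, so $(A-I)\Z^2$ is of finite index $N$ in $\Z^2$; hence $Nv \in [\Gamma_A,\Gamma_A]$ for every $v \in \Z^2$, giving $\Z^2 \subseteq \sqrt{[\Gamma_A,\Gamma_A]}$. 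Conversely, if $x = t^k v$ with $k \neq 0$ then $x^n$ projects to $nk \neq 0$ in $\Gamma_A/\Z^2 \cong \Z$ for every $n \geq 1$, so cannot lie in $[\Gamma_A,\Gamma_A] \subseteq \Z^2$; the reverse inclusion follows, and the coincidence with the subgroup $\Z^2$ shows the isolator is itself a characteristic subgroup.

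The step I expect to be the main obstacle is exactly this characteristic-subgroup claim, performed \emph{uniformly} under the single hypothesis $\Tr(A) \neq 2$. When $A$ is hyperbolic, $\Z^2$ is simply the Fitting subgroup of $\Gamma_A$ and the claim is immediate; but for $A$ of finite order or for $A = -I$, the Fitting subgroup strictly contains $\Z^2$, so a Fitting-type argument fails and a more refined invariant is required. The radical of the commutator subgroup sidesteps this and covers every case $\Tr(A) \neq 2$ at once, which is also the sharp hypothesis: for $\Tr(A) = 2$ one has $A = I$ or $A$ parabolic, and $(A-I)\Z^2$ ceases to be of finite index, destroying the characterization.
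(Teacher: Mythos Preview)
Your proof is correct and follows essentially the same route as the paper: both identify $\Z^2$ as the radical (isolator) of $[\Gamma_A,\Gamma_A]=(A-I)\Z^2$, use $\Tr(A)\neq 2$ to get finite index, and then read off the conjugacy from the induced maps on $\Z^2$ and on the quotient $\Z$. The only noteworthy difference is cosmetic: the paper kills the translation part $w$ in $\phi(t)=t^{\varepsilon}w$ by composing with an explicit automorphism $L_E$ of $\Gamma_B$, whereas you observe directly that $w\in\Z^2$ commutes with $Pv$ and so drops out of the relation $\phi(t)\,Pv\,\phi(t)^{-1}=PAv$ --- a cleaner shortcut yielding the same conclusion.
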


Although considered a folklore statement  going back as far as Poincar\'e 
the result above  seems to have  first appeared
with a sketch of proof in (\cite{GS}, Appendix 1, Prop.2) 
and then with all details in the unpublished 
\cite{Bar}. For the sake of completeness 
we give a detailed proof below. Notice that Proposition \ref{classif} 
actually gives the  classification of torus bundles up to homeomorphism, 
since these are aspherical Haken manifolds and hence  completely determined 
by their fundamental groups.

Eventually the problem of finding 3-manifolds $M_A$ and $M_B$ as in the 
statement of Theorem \ref{TVagree} is reduced to a purely arithmetic question 
on integral matrices. This amounts to find whether there exist Anosov 
integral matrices which are conjugate in every congruence subgroup 
but are not conjugate within $GL(2,\Z)$. This question was already answered  
affirmatively by Stebe in \cite{S}, who gave such an example. 
We are able to give infinitely many such pairs of examples 
having a slightly stronger property (as needed in 
Proposition \ref{classif}), as follows:

\begin{proposition}\label{noteq}
There exist infinitely many pairs of matrices $A$ and $B$ in $SL(2,\Z)$ 
which are conjugate in 
every congruence quotient, such that $A$ is  conjugate neither to 
$B$ nor to $B^{-1}$ in $GL(2,\Z)$. For instance we can take 
$A=\left(\begin{array}{cc}
1 & kq^2 \\
kv & 1+k^2q^2v \\
\end{array}
\right)$ and $B=\left(\begin{array}{cc}
1 & k \\
kvq^2 & 1+k^2q^2v \\
\end{array}
\right)$, where $k\in\Z$,  
$q$ is an odd prime number $q\equiv 1({\rm mod})\, 4)$,   
$v$ is a positive integer such that first $-v$ is a  non-zero quadratic residue 
mod $q$, and second $v$ is  divisible either 
by a prime $p\equiv 3({\rm mod}\, 4)$, or by $4$. 
\end{proposition}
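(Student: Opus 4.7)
The plan is to split the proposition into two independent parts: the local statement that $A$ and $B$ become conjugate in every congruence quotient $SL(2,\Z/m\Z)$, and the global statement that $A$ is conjugate to neither $B$ nor $B^{-1}$ in $GL(2,\Z)$. Both reduce to concrete calculations with the explicit matrices, the second bottoming out in an elementary arithmetic question about an indefinite binary quadratic form.

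For the local part, a direct computation shows $D A D^{-1} = B$ for the rational diagonal matrix $D = \operatorname{diag}(1, q^2)$. Hence for any $m$ coprime to $q$ the unimodular replacement $P_m = \operatorname{diag}(q^{-1}, q)$ lies in $SL(2,\Z/m\Z)$ and still conjugates $A$ to $B$ modulo $m$. The case $m = q^n$ requires a different construction, since $q$ is no longer invertible. Modulo $q$ both matrices degenerate to non-trivial transvections, namely $\begin{pmatrix} 1 & 0 \\ kv & 1 \end{pmatrix}$ and $\begin{pmatrix} 1 & k \\ 0 & 1 \end{pmatrix}$. Such transvections are $SL(2,\mathbb{F}_q)$-conjugate precisely when the ratio of their off-diagonal entries is a square; using $q \equiv 1 \pmod 4$ (so that $-1$ is itself a square in $\mathbb{F}_q$), this condition is equivalent to $-v$ being a non-zero quadratic residue modulo $q$, which is exactly the given hypothesis. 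A standard Hensel / deformation argument then lifts this conjugator to $SL(2,\Z/q^n\Z)$ for every $n$, and the Chinese Remainder Theorem assembles the two cases into conjugacy in every $SL(2,\Z/m\Z)$.

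For the non-conjugacy in $GL(2,\Z)$, writing $P A P^{-1} = B$ (respectively $P A P^{-1} = B^{-1}$) with $P = \begin{pmatrix} x & y \\ z & w \end{pmatrix}$ and equating entries pins down $z$ and $w$ in terms of $x$ and $y$, and a short computation shows that in both situations $|\det P| = |q^2 x^2 + k q^2 v\, x y - v y^2|$. Demanding $P \in GL(2,\Z)$ therefore reduces both assertions to the single question of whether the indefinite binary form
\begin{equation*}
f(x,y) \;=\; q^2 x^2 + k q^2 v\, x y - v y^2
\end{equation*}
represents $+1$ or $-1$ over $\Z$. Completing the square converts this to the equivalent Pell-type equation $q^2 U^2 - D V^2 = \pm 4$ with $D = v(k^2 q^2 v + 4)$.

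The value $f = -1$ is ruled out by a single congruence: modulo a prime $p \mid v$ with $p \equiv 3 \pmod 4$ the equation becomes $(qx)^2 \equiv -1 \pmod p$, impossible since $-1$ is a non-residue; alternatively, if $4 \mid v$, reducing modulo $4$ gives $x^2 \equiv -1 \pmod 4$. The case $f = +1$ is the principal obstacle and, I expect, the technical heart of the argument. The idea is to view integer solutions of $q^2 U^2 - D V^2 = 4$ as norm-four elements of the real quadratic order $\Z[\sqrt{D}]$; after scaling they correspond to powers of the fundamental unit lying in a prescribed $\Z$-sublattice. These powers satisfy a two-term linear recurrence whose residue classes modulo $q$ and modulo $4$ (or modulo $p$) are periodic, and the hypotheses on $q$, $v$ and $p$ are tuned so that the $q$-adic condition on the rational part and the $2$-adic (or $p$-adic) condition on the irrational part are never simultaneously satisfied by any power of the unit. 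Once this incompatibility is verified, letting $k$ run over nonzero integers with $q$ and $v$ fixed produces the announced infinite family.
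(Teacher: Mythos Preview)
Your local conjugacy argument is correct and a bit slicker than the paper's. The paper parametrises all solutions of $TA = BT$ as a two-parameter family $T(x,y)$ with $\det T(x,y) = q^2x^2 + kq^2v\,xy - vy^2$, and then exhibits explicit solutions of $\det T \equiv 1$ modulo each prime power: $(x,y) = (\bar q,0)$ for $p \neq q$, and $(x,y) = (0,\bar z)$ with $z^2 \equiv -v$ for $p = q$. Your approach via the rational diagonal conjugator $\operatorname{diag}(q^{-1},q)$ away from $q$, together with the $SL(2,\mathbb{F}_q)$ classification of transvections at $q$, amounts to the same thing but packages it more conceptually. The paper's explicit solutions obviate the Hensel step you invoke, since $-v$ is already a square modulo every $q^n$ once it is one modulo $q$ (here is where $q\equiv 1\pmod 4$ enters).

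Your treatment of $f = -1$ matches the paper exactly.

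The gap is in the $f = +1$ case. Your sketch is both vague and misattributes the hypotheses: you suggest the obstruction comes from incompatible $q$-adic and $p$-adic (or $2$-adic) constraints on powers of a fundamental unit, but the conditions on $p\equiv 3\pmod 4$ or $4\mid v$ play \emph{no role} in ruling out $f=+1$. The paper's argument is purely $q$-adic and rests on an explicit computation you do not indicate. After your substitution, one is led (with case distinctions on the parity of $k$ and $v$) to genuine Pell equations such as $u^2 - (n^2q^2v^2 + v)y^2 = 1$ when $k = 2n$. The key step is to \emph{compute the minimal solution explicitly via the continued fraction algorithm}: for instance in the even-$k$ case one finds $(u_0,y_0) = (2n^2q^2v + 1,\, 2nq)$, so $q \mid y_0$, and then the standard recurrence $y_{s+1} = y_0 u_s + u_0 y_s$ propagates $q \mid y_s$ to all solutions. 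Substituting back into $q^2x^2 + kq^2v\,xy - vy^2 = 1$ with $q\mid y$ forces $q^2 \mid 1$, a contradiction. The hypothesis that drives this is $v > 0$: for $v<0$ the minimal solution no longer has $q\mid y_0$ (e.g.\ $v=-1$, $k$ even gives $y_0 = 1$), and the argument collapses. Your proposal neither isolates the role of $v>0$ nor identifies the explicit minimal solution, and without that the $f=+1$ case is not settled.
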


\begin{remark}
Stebe's example from \cite{S} is 
$A=\left(\begin{array}{cc}
188 & 275 \\
121 & 177 \\
\end{array}
\right)$ and $B=\left(\begin{array}{cc}
188 & 11 \\
3025 & 177 \\
\end{array}
\right)$.  
\end{remark}

This implies that any  pair of  integral Anosov matrices as in Proposition 
\ref{noteq} gives raise to  SOL 3-manifolds which are not distinguished 
by their Turaev-Viro TQFT invariants, thus proving Theorem \ref{TVagree}.

In the examples above the manifolds $M_A$ and $M_B$ 
obtained throughout Proposition \ref{noteq} 
are actually commensurable SOL manifolds. 
This is not a fortuitous coincidence since we have the following: 

\begin{theorem}\label{comm}
If the torus bundles SOL manifolds $M$ and $N$ have the same 
Turaev-Viro invariants for the $U(1)$ and $SU(2)$ TQFTs 
then they are  strongly commensurable.
\end{theorem}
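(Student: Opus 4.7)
The plan is to translate the TQFT hypothesis into an arithmetic condition on the monodromy matrices and then to construct an explicit common finite cover. Write $M=M_A$ and $N=M_B$ for Anosov matrices $A,B\in SL(2,\Z)$. The standard Verlinde-type formula for the invariant of a mapping torus gives
\[
TV_\mathcal{C}(M_A)=\mathrm{Tr}\,\rho_\mathcal{C}(A),
\]
where $\rho_\mathcal{C}:SL(2,\Z)\to\mathrm{End}(V_\mathcal{C}(T^2))$ is the modular representation attached to $\mathcal{C}$; the hypothesis reads $\mathrm{Tr}\,\rho_\mathcal{C}(A)=\mathrm{Tr}\,\rho_\mathcal{C}(B)$ for every $\mathcal{C}$ of type $U(1)$ or $SU(2)$.

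First I would specialize to the abelian $U(1)$ theories. At level $\ell$ the torus Hilbert space is $\C[\Z/\ell\Z]$ and $\rho_\ell$ is a normalization of the classical Weil representation, so $\mathrm{Tr}\,\rho_\ell(A)$ admits an explicit Gauss-sum evaluation depending only on the image of $A$ in $SL(2,\Z/c\ell\Z)$ for a small universal constant $c$. The arithmetic lemma to establish is that the family $\{\mathrm{Tr}\,\rho_\ell(A)\}_{\ell\ge 1}$ pins down enough information to force $\mathrm{tr}(A)=\mathrm{tr}(B)$, or at least that $A$ and $B$ have the same real quadratic trace field $K=\Q(\sqrt{\mathrm{tr}(A)^2-4})$. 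This is where I expect the main obstacle: the Gauss sums degenerate when entries of $A$ are not coprime to $\ell$, and the purely abelian theory may fail to separate $A$ from $-A$. The role of the $SU(2)$ invariants is precisely to overcome both defects, since the non-abelian Jones--Witten representations act non-trivially on $-I$ and supply finer congruence data.

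Once $A$ and $B$ are known to share (at least) the trace field, their eigenvalues are units in the same real quadratic order; since the unit group of such an order has rank one modulo torsion, there exist positive integers $m,n$ with $A^m$ and $B^n$ having identical characteristic polynomials. Both matrices being Anosov are semisimple, so the companion-matrix normal form provides $C\in GL(2,\Q)$ with $CA^mC^{-1}=B^n$. Clearing denominators we take $C\in M_2(\Z)$, $\det C\ne 0$, and consider the sublattice $L=C(\Z^2)\subset\Z^2$: it has finite index in $\Z^2$ and is $B^n$-invariant, and in the basis $C(e_1),C(e_2)$ the restriction $B^n|_L$ is represented by the integer matrix $A^m$ itself. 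Consequently the torus bundle $M_{A^m}$, which is a finite cover of $M_A$, is simultaneously identified with the finite cover of $M_B$ associated to $L$; this common cover witnesses the strong commensurability of $M$ and $N$.
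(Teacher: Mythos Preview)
Your proposal identifies the right general shape---extract the trace from the $U(1)$ data and use $SU(2)$ to resolve the residual ambiguity---but there are two genuine gaps.

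\textbf{The arithmetic step is not actually carried out.} What you call ``the arithmetic lemma to establish'' is the entire content of the theorem, and you give no mechanism for it. The paper's argument is quite concrete: the MOO invariant $|Z_k(M_A,q)|$ equals $|H^1(M_A,\Z/k\Z)|^{1/2}$ (for good $k$), and via $H^1(M_A,\Z/k\Z)\cong \Z/k\Z\oplus \ker\nu_k(A^{T}-\mathbf 1)$ together with the Smith normal form one extracts ${\rm g.c.d.}({\rm Tr}(A)-2,k)$ for every $k$, hence $|{\rm Tr}(A)-2|$. This leaves the two-fold ambiguity ${\rm Tr}(A)={\rm Tr}(B)$ or ${\rm Tr}(A)+{\rm Tr}(B)=4$ (not an $A$ versus $-A$ ambiguity). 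The $SU(2)$ input enters through the metaplectic decomposition
\[
2\,{\rm Tr}\,\rho_{SU(2),k}(A)={\rm Tr}\,\rho_{U(1),k}(A)-{\rm Tr}\,\rho_{U(1),k}(-A),
\]
which brings ${\rm g.c.d.}({\rm Tr}(A)+2,k)$ into play; a further, rather delicate, case analysis on prime divisors then rules out ${\rm Tr}(A)+{\rm Tr}(B)=4$. None of this is visible in your sketch, and the Gauss-sum heuristic you offer does not substitute for it.

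\textbf{Your fallback proves the wrong conclusion.} In the paper, ``strongly commensurable'' means $A$ and $B$ are conjugate in $GL(2,\Q)$, equivalently ${\rm Tr}(A)={\rm Tr}(B)$. Your ``at least the same trace field'' option, followed by the unit-group argument producing $A^m$ conjugate to $B^n$ over $\Q$, yields only ordinary commensurability (this is Barbot's criterion in the paper). If you do succeed in proving ${\rm Tr}(A)={\rm Tr}(B)$, your last paragraph is redundant, since equal trace and determinant already give $GL(2,\Q)$-conjugacy; if you only get the trace field, the paragraph is insufficient. Either way that final step should be dropped and replaced by the direct trace equality argument.
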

Let us explain briefly the terminology used for the commensurability above. 
Two groups are said to be {\em commensurable} if they have 
finite index subgroups which are isomorphic. 

Barbot (\cite{Bar}, see also \cite{BG}) proved that the groups 
$\Gamma_A$ and $\Gamma_B$ are commensurable if and only if the quotient 
of their discriminants $D_A/D_B$ is the square of a rational number.  
Here the discriminant of $A$ is $D_A={\rm Tr}(A)^2-4\det(A)$, when 
${\rm Tr}(A)$ is odd. Moreover, this is 
equivalent to the fact that $A^p$ and $B^q$ are conjugate within $GL(2,\Q)$, 
for some $p,q\in\Z\setminus\{0\}$. We will call the matrices $A$ and $B$ in $SL(2,\Z)$ 
{\em strongly commensurable} if actually $A$ and $B$ are 
conjugate within $GL(2,\Q)$, 
namely they have the same trace (and determinant).

Let us introduce some more terminology coming from classical 
class field theory.  We set 
${\mathcal I}(M_A)$ for the ideal class group of the 
order $\Z\left[\frac{{\rm Tr}(A)+\sqrt{D_A}}{2}\right]$ 
of the real quadratic field $\Q(\sqrt{D_A})$. When $D_A$ is squarefree 
the order is the ring of integers of  $\Q(\sqrt{D_A})$.  
An old Theorem of Latimer, MacDuffee and Taussky-Todd 
(see \cite{TT} and \cite{New}, III.16) 
shows that there is a one-to-one correspondence 
between ${\mathcal I}(M_A)$ and the set of matrices 
$B$ from $SL(2,\Z)$ having the same 
trace as $A$, which are considered up to conjugacy in $GL(2,\Z)$. 
In this context the ``taking the inverse'' map 
$B\to B^{-1}$ passes to the quotient 
and gives a well-defined involution  
$\iota:{\mathcal I}(M_A)\to {\mathcal I}(M_A)$.  

Let $M$ be a given closed orientable 3-manifold. Denote by 
${\mathcal X}^{U(1),SU(2)}(M)$  (and  
${\mathcal X}^{TV}(M)$) the set of homeomorphisms classes 
of closed orientable 3-manifolds $N$ having the same 
abelian, $SU(2)$ Turaev-Viro invariants (and 
the same Turaev-Viro invariants, for every spherical fusion category, 
respectively).

\begin{corollary}\label{class}
Let $M$ be a SOL torus bundle over the circle. The subset of the  
torus bundles homeomorphism classes in   
${\mathcal X}^{U(1),SU(2)}(M)$ injects into ${\mathcal I}(M)/\iota$ and, in particular, it is bounded by the class number of the corresponding 
totally real quadratic field.
\end{corollary}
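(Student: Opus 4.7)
The plan is to chain three ingredients already on the table: Theorem \ref{comm}, the Latimer--MacDuffee--Taussky-Todd (LMT) correspondence, and Proposition \ref{classif}. Write $M=M_A$ for some Anosov $A\in SL(2,\Z)$, and let $M_B$ be any torus bundle representing a class in ${\mathcal X}^{U(1),SU(2)}(M)$. First, I would invoke Theorem \ref{comm} to conclude that $M_A$ and $M_B$ are strongly commensurable, which by the definition recalled immediately above the statement means that $A$ and $B$ are conjugate inside $GL(2,\Q)$; in particular $\mathrm{Tr}(B)=\mathrm{Tr}(A)$, and of course $\det B=\det A=1$.

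Because $A$ and $B$ have the same characteristic polynomial $x^2-\mathrm{Tr}(A)\,x+1$, the LMT correspondence quoted in the introduction attaches to $B$ a canonical class $[B]\in\mathcal{I}(M_A)$. The candidate map is
\[
\Psi:\ M_B\ \longmapsto\ \bigl[\,[B]\,\bigr]\ \in\ \mathcal{I}(M_A)/\iota.
\]
That $\Psi$ depends only on the homeomorphism type of $M_B$, and that it is injective, both reduce to Proposition \ref{classif}: since Anosov torus bundles are aspherical Haken $3$-manifolds, $M_{B_1}\cong M_{B_2}$ is equivalent to $\Gamma_{B_1}\cong\Gamma_{B_2}$, and by Proposition \ref{classif} (applicable since Anosov monodromies have trace $\neq\pm 2$) this happens exactly when $B_1$ is $GL(2,\Z)$-conjugate to $B_2$ or to $B_2^{-1}$. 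Under LMT these two alternatives correspond precisely to $[B_1]=[B_2]$ or $[B_1]=\iota([B_2])$, so $\Psi$ descends to a well-defined injection from the subset of torus-bundle classes of ${\mathcal X}^{U(1),SU(2)}(M)$ into $\mathcal{I}(M_A)/\iota$.

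The cardinality statement is then immediate: $|\mathcal{I}(M_A)/\iota|\le|\mathcal{I}(M_A)|$, the latter being the class number of the quadratic order $\Z\bigl[(\mathrm{Tr}(A)+\sqrt{D_A})/2\bigr]$; when $D_A$ is squarefree this order equals the ring of integers of $\Q(\sqrt{D_A})$ and the bound reduces to the class number of that totally real quadratic field.

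The only step that needs any genuine care, and thus the main (mild) obstacle, is to verify that the involution induced on $\mathcal{I}(M_A)$ by matrix inversion $B\mapsto B^{-1}$ coincides with the $\iota$ fixed by the paper just before the statement of the corollary. This is a direct unwinding of the LMT dictionary: $B$ is sent to the $\Z$-module class of an eigenvector of $B$ acting on $\Q(\sqrt{D_A})^2$, and inverting $B$ interchanges its two eigenlines, which at the level of ideal classes is the standard complex-conjugation involution of the quadratic order. Once this identification is made, the chain Theorem \ref{comm} $\Rightarrow$ common trace $\Rightarrow$ LMT $\Rightarrow$ Proposition \ref{classif} yields the desired injection with no further work.
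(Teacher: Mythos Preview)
Your proof is correct and follows essentially the same route as the paper: invoke Theorem \ref{comm} (via Proposition \ref{retrievetrace}) to pin down ${\rm Tr}(B)={\rm Tr}(A)$, apply the Latimer--MacDuffee--Taussky-Todd correspondence, and use Proposition \ref{classif} to identify homeomorphism classes with $GL(2,\Z)$-conjugacy classes modulo the involution $B\mapsto B^{-1}$. The only remark is that your ``mild obstacle'' is not an obstacle at all here: in the paragraph preceding the corollary the paper \emph{defines} $\iota$ as the involution on ${\mathcal I}(M_A)$ induced by $B\mapsto B^{-1}$ via LMT, so there is nothing to verify.
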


\begin{remark}
One might consider the set of torus bundles 
$N$ having the same Turaev-Viro invariants 
as $M$, up to an {\em orientation preserving homeomorphism}. 
Observe that $M_A$ and $M_B$ are orientation-preserving 
homeomorphic if and only if $A$ and $B$ are conjugate within $SL(2,\Z)$ 
or else $A$ and $B^{-1}$ are conjugate in $GL(2,\Z)$ by a matrix 
of determinant $-1$. 
\end{remark}

The few examples  we know would suggest that  the subset 
of torus bundles homeomorphism classes in ${\mathcal X}^{TV}(M)$  
is a quite small proper subset of ${\mathcal I}(M)/\iota$. 
As a consequence of our proof of Theorem \ref{TVagree} and 
results of Platonov and Rapinchuk (see \cite{Platonov,Rapin}, 
\cite[ section 8.8.5]{PR}) 
on the genus problem in arithmetic groups we obtain 
a stronger but less precise statement as follows: 

\begin{corollary}\label{plat}
The number of homeomorphisms classes of torus bundles in 
${\mathcal X}^{TV}(M)$, for  $M$ running 
over all torus bundles, is unbounded. Alternatively, for each $m \geq 2$ 
there exist examples of $m$ pairwise non-homeomorphic torus bundles 
having the same Turaev-Viro invariants for all spherical fusion categories.  
\end{corollary}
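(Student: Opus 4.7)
The approach is to follow the same scheme as the proof of Theorem \ref{TVagree}, replacing Stebe's single example by arbitrarily large families of matrices supplied by the Platonov--Rapinchuk theory of the genus problem in arithmetic groups. Call $A, B \in SL(2,\Z)$ congruence-equivalent if they are conjugate in every congruence quotient $SL(2,\Z/k\Z)$. Proposition \ref{trace} then guarantees that $TV_{\mathcal C}(M_A) = TV_{\mathcal C}(M_B)$ for every spherical fusion category ${\mathcal C}$, while Proposition \ref{classif} shows that $M_A$ and $M_B$ are homeomorphic precisely when $A$ is conjugate in $GL(2,\Z)$ to $B$ or $B^{-1}$. Consequently, it suffices to produce, for each $m \geq 2$, a single congruence-equivalence class containing at least $m$ distinct $GL(2,\Z)$-conjugacy classes whose quotient by the inversion $B \mapsto B^{-1}$ still has at least $m$ elements.

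The genus of an Anosov matrix $A$, that is the set of $GL(2,\Z)$-conjugacy classes inside its congruence-equivalence class, embeds into the ideal class group $\mathcal{I}(M_A)$ of the order $\Z[\frac{\mathrm{Tr}(A)+\sqrt{D_A}}{2}]$ via the Latimer--MacDuffee--Taussky-Todd correspondence, and is cut out by local conjugacy conditions at each prime. By the results of Platonov and Rapinchuk on the genus of semisimple elements in arithmetic groups (\cite{Platonov,Rapin}, \cite[section 8.8.5]{PR}) applied to $SL(2,\Z)$ acting on an anisotropic torus, the cardinality of this genus is unbounded as $A$ varies over Anosov matrices: the obstruction is controlled by local cohomological invariants, and explicit families of real quadratic orders whose class groups contain arbitrarily many elements satisfying the requisite local constraints are exhibited in \emph{loc.~cit.} In particular, for any integer $N$ one can find an Anosov $A_0 \in SL(2,\Z)$ whose genus has cardinality at least $N$.

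The main obstacle is to ensure that the inversion involution $\iota : B \mapsto B^{-1}$ on the genus does not collapse it below the required size. Since $\iota^2 = \mathrm{id}$, every $\iota$-orbit has at most two elements, so a genus of cardinality at least $2m$ contains at least $m$ distinct $\iota$-orbits. Taking $N = 2m$ above and picking one representative $A_i$ from each of $m$ distinct $\iota$-orbits yields matrices $A_1, \ldots, A_m \in SL(2,\Z)$ that are pairwise congruence-equivalent, yet no two of which are conjugate in $GL(2,\Z)$ up to inversion. The associated SOL torus bundles $M_{A_1}, \ldots, M_{A_m}$ are therefore pairwise non-homeomorphic by Proposition \ref{classif} and share all Turaev--Viro invariants by Proposition \ref{trace}. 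Since $m$ was arbitrary, this proves both the unboundedness statement and the concrete construction of $m$ pairwise non-homeomorphic examples claimed in the corollary.
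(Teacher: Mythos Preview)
Your proposal is correct and follows essentially the same route as the paper: reduce via Propositions \ref{trace} and \ref{classif} to finding a single congruence-conjugacy class (genus) containing many $GL(2,\Z)$-conjugacy classes, and then invoke the Platonov--Rapinchuk unboundedness results (\cite{Platonov,Rapin}, \cite[section 8.8.5]{PR}). The paper's own argument in Section 6.4 is in fact terser than yours---it simply cites those results and concludes; your explicit treatment of the inversion involution $\iota$ via the ``orbits have size at most $2$, so take $N=2m$'' step is a detail the paper leaves implicit.
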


We can slightly improve the finiteness result in Corollary \ref{class}, 
as follows: 

\begin{proposition}\label{finiteclass}
If $M$ is a closed irreducible orientable SOL manifold then 
$|{\mathcal X}^{TV}(M)|$ is finite.  
\end{proposition}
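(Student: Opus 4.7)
My plan is to combine three ingredients: the torus-bundle finiteness of Corollary \ref{class}, the realization of Dijkgraaf-Witten invariants as Turaev-Viro invariants, and the rigidity of closed aspherical $3$-manifolds with virtually polycyclic fundamental group.

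First I would recall that $M$ is aspherical, $\pi_{1}(M)$ is virtually polycyclic of Hirsch length $3$, and there is a finite cover $M_{A}\to M$ of degree $d\leq 8$, where $M_{A}$ is a torus bundle with monodromy an Anosov matrix $A\in SL(2,\Z)$. The goal is to show that any $N\in{\mathcal X}^{TV}(M)$ admits an analogous torus-bundle cover $M_{B}\to N$ whose monodromy $B$ is strongly commensurable to $A$, and to conclude by the finiteness of strong commensurability classes of such matrices.

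The Turaev-Viro input I would exploit is that ${\rm Vec}_{G}^{\omega}$ is a spherical fusion category for every finite group $G$ and $\omega\in H^{3}(G,\C^{\times})$, so that the Dijkgraaf-Witten invariants $DW_{G,\omega}$ are special cases of $TV_{\mathcal C}$. Taking $\omega$ trivial gives $TV_{\mathcal C}(N)=|{\rm Hom}(\pi_{1}(N),G)|/|G|$, hence the hypothesis $N\in{\mathcal X}^{TV}(M)$ forces $|{\rm Hom}(\pi_{1}(N),G)|=|{\rm Hom}(\pi_{1}(M),G)|$ for every finite $G$, equivalently $\widehat{\pi_{1}(N)}\cong\widehat{\pi_{1}(M)}$. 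The right-hand side is virtually poly-$\widehat{\Z}$ of Hirsch length $3$. Combined with Geometrization and with the isomorphism $H_{1}(N;\Z/n\Z)\cong H_{1}(M;\Z/n\Z)$ extracted from the $U(1)$ Turaev-Viro invariants, this pins $N$ down as a closed irreducible orientable SOL manifold: the profinite structure rules out every geometry other than $E^{3}$, Nil and SOL, and the torsion pattern in $H_{1}$ together with the $SU(2)$ Turaev-Viro invariants separates SOL from the two remaining flat/nilpotent cases.

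Once $N$ is known to be SOL, it admits a torus-bundle cover $M_{B}\to N$ of degree $d'\leq 8$. The profinite isomorphism restricts to finite-index profinite commensurability of $\Gamma_{A}$ and $\Gamma_{B}$, so by the arithmetic argument underlying Theorem \ref{comm}, transplanted to these covers, the matrices $A$ and $B$ have equal trace up to a normalization determined by $d$ and $d'$. Since there are only finitely many pairs $(d,d')\in\{1,2,3,4,6,8\}^{2}$, the Latimer-MacDuffee-Taussky-Todd theorem then bounds the number of $GL(2,\Z)$-conjugacy classes of admissible $B$ by the class number of ${\mathcal I}(M_{B})$, yielding finitely many torus bundles $M_{B}$. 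Since each closed aspherical SOL $3$-manifold is determined by its fundamental group, and a virtually polycyclic group admits only finitely many extensions by a finite group of order $\leq 8$, only finitely many $N$ can arise in this way, proving $|{\mathcal X}^{TV}(M)|<\infty$.

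The principal obstacle is the first step: detecting from Turaev-Viro data alone that $N$ is SOL. The route via profinite completions and Geometrization hinges on pushing virtual polycyclicity from $\widehat{\pi_{1}(M)}$ back to $\pi_{1}(N)$ inside the class of fundamental groups of closed aspherical $3$-manifolds, and then eliminating Nil and $E^{3}$ using the small invariants $H_{1}$ and the $SU(2)$ Turaev-Viro pairing on the fiber torus. Once this geometric rigidity is in place, the remaining steps are arithmetic and follow the pattern already used for Corollary \ref{class}.
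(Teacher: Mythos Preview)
Your opening move---using Dijkgraaf--Witten invariants (the Turaev--Viro invariants for ${\rm Vec}_G$) to force $\widehat{\pi_1(N)}\cong\widehat{\pi_1(M)}$---is exactly what the paper does, via Lemma~\ref{finitequot}. From there, however, the paper takes a much shorter path than you propose. It never shows that $N$ is SOL, never passes to torus-bundle covers, and never invokes the trace arithmetic of Theorem~\ref{comm}. Instead it argues: $\pi_1(M)$ is polycyclic (Evans--Moser); $\pi_1(N)$ is residually finite (Hempel plus Geometrization); a residually finite group with the same finite quotients as a polycyclic group is itself polycyclic (Sabbagh--Wilson); and there are only finitely many polycyclic groups with a given profinite completion (Grunewald--Pickel--Segal). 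Since closed aspherical $3$-manifolds are determined by their fundamental groups, finiteness follows.

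Your alternative route has a genuine gap in its second half. You want to apply the trace argument of Theorem~\ref{comm} to the covers $M_A$ and $M_B$, but the hypothesis of that theorem is equality of the $U(1)$ and $SU(2)$ Turaev--Viro invariants of the torus bundles themselves, and you only know this for $M$ and $N$. Turaev--Viro invariants do not behave functorially under finite covers, so there is no direct transfer. You try to bypass this by saying the profinite isomorphism ``restricts to finite-index profinite commensurability of $\Gamma_A$ and $\Gamma_B$'', but an isomorphism $\widehat{\pi_1(M)}\cong\widehat{\pi_1(N)}$ need not carry the closure of $\Gamma_A$ to the closure of $\Gamma_B$; it only carries it to \emph{some} open subgroup of the right index. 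Even granting profinite commensurability of $\Gamma_A$ and $\Gamma_B$, that gives only commensurability of $A$ and $B$ in the sense of Lemma~\ref{commensur}, and a commensurability class contains infinitely many $GL(2,\Z)$-conjugacy classes, so Latimer--MacDuffee--Taussky--Todd does not apply without first pinning down the trace. Your ``equal trace up to a normalization determined by $d$ and $d'$'' is not substantiated. The paper's invocation of Grunewald--Pickel--Segal sidesteps all of this: once $\pi_1(N)$ is known to be polycyclic, finiteness is immediate, with no need to identify the geometry of $N$ or to chase covers.
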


These results give some evidence for the following general conjecture: 

\begin{conjecture}
If $M$ and $N$ are closed irreducible geometric 3-manifolds 
having  the same abelian and $SU(2)$ Turaev-Viro invariants 
then $M$ and $N$ should be commensurable and, in particular, they share 
the same geometry. 
\end{conjecture}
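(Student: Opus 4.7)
The plan is to proceed by case analysis on the Thurston geometry of $M$, first showing that the abelian and $SU(2)$ Turaev--Viro data determine which of the eight geometries $M$ admits, and then showing that within each geometric class these invariants pin down the commensurability class. The first step exploits that $TV_{U(1)}$ recovers the linking form on $\mathrm{Tors}(H_1)$ and the Betti number $b_1$, which already separates the rational-homology-sphere geometries from those of positive $b_1$, and that the finite-level $SU(2)$ TV invariants detect the Euler number of any Seifert fibration through the Lawrence--Rozansky--Hansen formulas. Combined with the classification of closed geometric $3$-manifolds, this should be enough to identify the geometry of $M$, at least once one has ruled out accidental coincidences between, say, flat and Nil manifolds with the same $H_1$ and linking form.

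For the geometries of bounded complexity the claim reduces to essentially combinatorial checks. The spherical and $S^2\times\R$ cases follow because such manifolds are determined up to finitely many candidates by $H_1$ and the linking form, so commensurability is automatic. The Euclidean and Nil cases are organized by a finite list of holonomies and, together with the action on $H_1$ detected by $TV_{U(1)}$, give commensurability directly. The SOL case is exactly Theorem \ref{comm}, whose proof (via Proposition \ref{trace}, the congruence subgroup property for modular representations, and the Latimer--MacDuffee correspondence with the ideal class group of the associated real quadratic order) furnishes the template: one extracts enough arithmetic data from the $U(1)$ and $SU(2)$ invariants to detect the underlying commensurability invariant of the fundamental group.

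For the Seifert-fibered geometries $\H^2\times\R$ and $\widetilde{SL(2,\R)}$, I would rely on the explicit formulas of Lawrence--Rozansky, Hansen and Takata expressing $RT_{SU(2),k}$ of a Seifert manifold as a sum over the weights of the base orbifold, with coefficients indexed by the multiplicities of exceptional fibres and the rational Euler number $e$. The plan is to show that, as $k$ varies, these values determine the multiset of cone-point orders of the base orbifold (hence the base orbifold up to commensurability by the Thurston--Bonahon--Siebenmann classification) together with $e$ up to rescaling. Since commensurability of Seifert $3$-manifolds is governed precisely by the commensurability class of the base orbifold together with the rational Euler number, this closes the Seifert case modulo some bookkeeping to match conventions.

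The hard part, and the main obstacle, is the hyperbolic case. Commensurability classes of closed hyperbolic $3$-manifolds are extremely rigid by Mostow, and the natural conjectural bridge between quantum invariants and their commensurability invariants (invariant trace field, invariant quaternion algebra, volume, Chern--Simons) goes through the Kashaev--Murakami--Murakami volume conjecture and its refinements; but these read those invariants only from the large-level asymptotics of coloured Jones polynomials, not from the integer-valued $TV$ numbers themselves. Even granting volume and Chern--Simons, one does not recover commensurability, so a genuinely finer TV-accessible invariant is needed. I would attack this by first proving an arithmetic analogue of Proposition \ref{trace} in the Bianchi setting: namely that for arithmetic hyperbolic $M$ the congruence closure of $\pi_1(M)$ inside its ambient arithmetic group $PSL(2,\O_d)$ is a TV-invariant, so that strong approximation and the Margulis arithmeticity dichotomy force commensurability in the arithmetic case. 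The non-arithmetic case would then have to be reduced to the invariant trace field, which one would try to recover from the asymptotic expansion of $TV_{SU(2),k}(M)$ as $k\to\infty$; this last step is where the conjecture touches genuinely open ground and where I expect any proof to stall without substantial new input from the volume conjecture program.
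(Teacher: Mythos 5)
There is a genuine gap here, and in fact the statement you are trying to prove is not proved anywhere in the paper: it appears as an open \emph{conjecture}, for which the paper offers only evidence, namely Theorem \ref{comm} (two SOL torus bundles with the same abelian and $SU(2)$ Turaev--Viro invariants are strongly commensurable) together with the examples showing this is sharp. Your text is a program outline rather than a proof, and you concede as much in the final paragraph: the hyperbolic case is left resting on the volume conjecture and on a hoped-for asymptotic recovery of the invariant trace field from $TV_{SU(2),k}$ as $k\to\infty$, none of which is available. Since closed hyperbolic manifolds form the generic case of the conjecture, this is not a bookkeeping issue but the heart of the matter; a proposal that stalls exactly there does not establish the statement.

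Beyond that admitted obstruction, several intermediate steps are asserted without justification and would need substantial work even in the non-hyperbolic cases. The opening claim that the abelian and $SU(2)$ Turaev--Viro data determine which of the eight geometries $M$ carries is not known; the paper itself only extracts $b_1$, the linking pairing, and (for SOL torus bundles) the trace of the monodromy, via Lemma \ref{MOO}, Lemma \ref{kern} and Proposition \ref{retrievetrace}, and even there the NIL/SOL distinction required the Betti number rather than the linking form alone (see the remark after the $n=4$ example). Similarly, the Seifert-fibered step --- that varying $k$ in the Lawrence--Rozansky type formulas recovers the multiset of cone-point orders and the rational Euler number up to the rescaling relevant for commensurability --- is stated as a plan, not proved, and the ``arithmetic analogue of Proposition \ref{trace} in the Bianchi setting'' is speculation with no mechanism offered for why the congruence closure of $\pi_1(M)$ would be a TV-invariant. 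In short, your route is a reasonable heuristic decomposition of the conjecture, and its SOL component correctly matches what the paper actually proves, but as a proof of the conjecture it has irreparable gaps at present.
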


On the other hand, we don't know whether the unboundedness 
of the number of classes of torus bundles 
is a general phenomenon, valid in higher genus as well. 
In order to dismiss obvious examples constructed out of torus bundles 
we ask:  
\begin{conjecture}
The number of homeomorphism classes in 
${\mathcal X}^{TV}(M)$ of hyperbolic 
fibered 3-manifolds $N$  with fiber of genus $g\geq 1$ is finite 
for every $M$.  Is this number unbounded, 
when $M$ runs over the set of hyperbolic fibered 3-manifolds 
with fiber of given genus? 
\end{conjecture}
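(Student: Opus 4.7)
The plan is to reduce the topological finiteness statement to a finiteness statement inside the mapping class group $\mathrm{Mod}(\Sigma_g)$, paralleling the structure of the torus case treated in Theorem~\ref{TVagree} and Proposition~\ref{finiteclass}. Write $M=M_\phi$ with $\phi\in\mathrm{Mod}(\Sigma_g)$ pseudo-Anosov, and let $N=M_\psi$ be any hyperbolic fibered 3-manifold in $\mathcal{X}^{TV}(M)$ with fiber of genus $g$. By Thurston's hyperbolization together with Mostow rigidity (and the uniqueness of fibered faces of the Thurston norm ball via McMullen's Teichm\"uller polynomial), $\psi$ is pseudo-Anosov and $M_\phi\cong M_\psi$ if and only if $\psi$ is conjugate to $\phi^{\pm 1}$ in the extended mapping class group $\mathrm{Mod}^{\pm}(\Sigma_g)$. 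So it suffices to bound the number of such conjugacy classes $[\psi]$ consistent with equality of all Turaev-Viro invariants.

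First I would establish a higher-genus analog of Proposition~\ref{trace}. The Ng-Schauenburg congruence subgroup property, together with its extension from $SL(2,\Z)$ to the higher-genus mapping class group representations attached to modular tensor categories, should yield that every projective representation $\rho_{\mathcal C}$ of $\mathrm{Mod}(\Sigma_g)$ coming from a spherical fusion category $\mathcal C$ factors through a finite quotient by a ``congruence kernel'' $K_{\mathcal C}$. Since $TV_{\mathcal C}(M_\phi)$ is (up to normalization) the trace of $\rho_{\mathcal C}(\phi)$, the equality $TV_{\mathcal C}(M_\phi)=TV_{\mathcal C}(M_\psi)$ for all $\mathcal C$ forces $\phi$ and $\psi$ to have the same character in every such representation. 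Running over all Drinfeld doubles and using standard separation properties of characters I would deduce that $\phi$ and $\psi$ are conjugate in every finite quotient $\mathrm{Mod}(\Sigma_g)/K_{\mathcal C}$, i.e., they have the same image in the ``TQFT-profinite'' completion of $\mathrm{Mod}(\Sigma_g)$.

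The final step is then a finiteness statement of ``genus'' type for pseudo-Anosov classes: for a fixed $[\phi]$, the set of pseudo-Anosov classes $[\psi]$ with the same image in this TQFT profinite completion is finite modulo inversion. I would attack this by combining two ingredients. First, a bound on the dilatation $\lambda(\psi)$ in terms of data visible in finite quotients, obtained through the induced action on $H_1$ with finite coefficients and on homology of characteristic covers, in the spirit of the work of Hadari and Liu-Margalit; such data controls the spectral radius of $\psi_*$ on deep enough covers and should bound $\lambda(\psi)$ above in terms of $\lambda(\phi)$. Second, the Farb-Leininger-Margalit finiteness theorem, asserting that for each $g\geq 2$ and every $\Lambda>0$ there are only finitely many $\mathrm{Mod}(\Sigma_g)$-conjugacy classes of pseudo-Anosovs with $\lambda\leq \Lambda$; for $g=1$ the corresponding finiteness is already furnished by Corollary~\ref{class} and Proposition~\ref{finiteclass}.

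The main obstacle is the dilatation bound. In genus one the arithmetic of $SL(2,\Z)$, its class field theory over real quadratic orders, and the Platonov-Rapinchuk genus bounds replace any dynamical input. For $g\geq 2$ the group $\mathrm{Mod}(\Sigma_g)$ is not arithmetic, and the symplectic representation to $\mathrm{Sp}(2g,\Z)$ is blind to the Torelli subgroup, so controlling a Torelli-twisted pseudo-Anosov $\psi$ solely from its TQFT-profinite image requires quantum representations sensitive to the Johnson-Morita filtration. Building sufficiently fine such representations from spherical fusion categories, and extracting from them an effective dilatation bound, is the genuine difficulty I would expect to meet in proving this conjecture.
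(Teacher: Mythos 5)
You should first note that this statement is posed in the paper as an open conjecture: no proof is given, and the paper's closing section on higher genus explains explicitly why the genus-one strategy does not extend. Your proposal is therefore rightly a programme rather than a proof, but its pivotal step is not merely unproved --- it is false. You assert that the Ng--Schauenburg congruence property should extend so that every representation of $\mathrm{Mod}(\Sigma_g)$ attached to a spherical fusion category factors through a finite quotient $\mathrm{Mod}(\Sigma_g)/K_{\mathcal C}$. For $g\geq 2$ (and already for the once-punctured torus, as the paper observes) the quantum representations have \emph{infinite} image: their kernels are expected to be, and in known cases are, the normal closure of the $k$-th powers of Dehn twists, a subgroup of infinite index, and the density results of Freedman--Larsen--Wang for the $SU(2)/SO(3)$ theories rule out finite image altogether. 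The congruence phenomenon is special to genus one, where the representation factors through $SL(2,\Z/N\Z)$, and this failure is exactly the obstruction the paper identifies to any higher-genus analogue of Proposition \ref{trace}. Consequently your chain ``equal $TV_{\mathcal C}$ for all $\mathcal C$ $\Rightarrow$ equal characters $\Rightarrow$ conjugate in every finite quotient'' collapses: there are no finite quotients to work in, and even in genus one the paper proves only the reverse implication (congruence conjugacy implies equal invariants), the converse being flagged there merely as plausible.

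The remaining ingredients are also not available. Extracting an upper bound on the dilatation $\lambda(\psi)$ from data visible in finite quotients or in the homology of finite covers is itself open (the homological spectral radius over finite covers is only conjecturally tied to $\lambda$), so there is no bridge from the hypothesis $N\in\mathcal{X}^{TV}(M)$ to the Farb--Leininger--Margalit-type finiteness of pseudo-Anosov classes with bounded dilatation. Finally, the reduction of homeomorphism of mapping tori to conjugacy of $\psi$ with $\phi^{\pm 1}$ in $\mathrm{Mod}^{\pm}(\Sigma_g)$ is overstated --- fibered faces of the Thurston norm ball need not be unique --- though for the direction you need (injecting homeomorphism classes into monodromy classes) this is harmless. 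In short, the proposal transplants the genus-one skeleton, but the load-bearing congruence step fails in higher genus, which is precisely why the statement remains a conjecture in the paper; any genuine attack must cope with infinite-image quantum representations, for instance through the quotients $\mathrm{Mod}(\Sigma_g)$ by powers of Dehn twists rather than through finite congruence quotients.
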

 
The pairs of manifolds from Theorem \ref{TVagree} and Corollary \ref{plat} 
also give a negative answer to a question stated by Long and Reid 
in \cite{LR}  (see also Remark 3.7 in \cite{CFW}), as follows: 

\begin{corollary}\label{profinite}
For any $m\geq 2$ there exist torus bundles whose 
fundamental groups have isomorphic profinite completions
although they are pairwise not isomorphic. 
\end{corollary}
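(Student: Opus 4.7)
The plan is to invoke Corollary \ref{plat} (or more concretely the family from Proposition \ref{noteq}) to produce, for any fixed $m\geq 2$, a collection of $m$ Anosov matrices $A_1,\dots,A_m\in SL(2,\Z)$ which are pairwise conjugate in every congruence quotient $SL(2,\Z/n\Z)$, but such that no $A_i$ is conjugate in $GL(2,\Z)$ to $A_j$ or to $A_j^{-1}$ for $i\neq j$. By Proposition \ref{classif}, the fundamental groups $\Gamma_{A_1},\dots,\Gamma_{A_m}$ are then pairwise non-isomorphic (and the bundles $M_{A_i}$ are pairwise non-homeomorphic). So the whole content left to verify is that these polycyclic groups have isomorphic profinite completions.

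First I would identify the profinite completion of each $\Gamma_A$ with the topological semidirect product $\widehat{\Z}^{\,2}\rtimes_A\widehat{\Z}$. The short exact sequence
\begin{equation}
1\longrightarrow \Z^2\longrightarrow \Gamma_A\longrightarrow \Z\longrightarrow 1
\end{equation}
splits and consists of finitely generated, residually finite, polycyclic groups; passing to profinite completions (which is exact on polycyclic groups, as these are good in Serre's sense) produces a split exact sequence
\begin{equation}
1\longrightarrow \widehat{\Z}^{\,2}\longrightarrow \widehat{\Gamma_A}\longrightarrow \widehat{\Z}\longrightarrow 1
\end{equation}
in which the action of the generator of $\widehat{\Z}$ on $\widehat{\Z}^{\,2}$ is induced by $A\in SL(2,\Z)\subset SL(2,\widehat{\Z})$. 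This identifies $\widehat{\Gamma_A}$ with $\widehat{\Z}^{\,2}\rtimes_A\widehat{\Z}$.

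Next I would produce, for any pair $A=A_i$, $B=A_j$ from our list, a conjugator in $SL(2,\widehat{\Z})$. The sets $X_n=\{P\in SL(2,\Z/n\Z)\mid PAP^{-1}=B\}$ are, by hypothesis, non-empty finite sets, and the reduction maps $X_{nm}\to X_n$ are surjective (or at least form a compatible projective system with non-empty fibres), so the projective limit $X=\varprojlim_n X_n\subset SL(2,\widehat{\Z})$ is non-empty by compactness. Any $P\in X$ satisfies $PAP^{-1}=B$ in $SL(2,\widehat{\Z})$, and the map
\begin{equation}
\widehat{\Z}^{\,2}\rtimes_A\widehat{\Z}\longrightarrow \widehat{\Z}^{\,2}\rtimes_B\widehat{\Z},\qquad (v,n)\longmapsto (Pv,n),
\end{equation}
is then a (continuous) group isomorphism, as a direct check of the semidirect product relation using $PA=BP$ shows. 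Hence $\widehat{\Gamma_{A_i}}\cong\widehat{\Gamma_{A_j}}$ for all $i,j$, concluding the proof.

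The only real obstacle is the identification $\widehat{\Gamma_A}\cong \widehat{\Z}^{\,2}\rtimes_A\widehat{\Z}$; here one has to be careful that exactness of profinite completion along the fibre sequence really holds. This is standard for polycyclic-by-finite groups because every finite-index subgroup of $\Z^2$ extends to a finite-index subgroup of $\Gamma_A$ (just take its intersections with conjugates by powers of $t$, which is a finite process since $SL(2,\Z)$ acts on the finite lattice of subgroups of given index), so the profinite topology on $\Gamma_A$ restricts to the profinite topology on $\Z^2$, and everything passes to the inverse limit. All the remaining steps are routine.
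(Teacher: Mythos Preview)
Your proof is correct but takes a genuinely different route from the paper's.

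The paper argues via TQFT: among the spherical fusion categories are those associated to finite groups $F$, and the corresponding (untwisted Dijkgraaf--Witten) invariant of a closed $3$-manifold $M$ equals $\frac{1}{|F|}|\mathrm{Hom}(\pi_1(M),F)|$. Since the torus bundles furnished by Corollary~\ref{plat} share all Turaev--Viro invariants, their fundamental groups satisfy $|\mathrm{Hom}(\Gamma_{A_i},F)|=|\mathrm{Hom}(\Gamma_{A_j},F)|$ for every finite $F$. A M\"obius-type inclusion--exclusion over subgroups of $F$ (the paper's Lemma~\ref{finitequot}) then forces $|\mathrm{Hom}^{\mathrm{surj}}(\Gamma_{A_i},F)|=|\mathrm{Hom}^{\mathrm{surj}}(\Gamma_{A_j},F)|$, so the sets of finite quotients agree, and one concludes by the standard fact (together with Nikolov--Segal) that this characterises the profinite completion.

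Your argument bypasses TQFT entirely: you use only the arithmetic input (local conjugacy in every $SL(2,\Z/n\Z)$), identify $\widehat{\Gamma_A}\cong\widehat{\Z}^{\,2}\rtimes_A\widehat{\Z}$ via goodness of polycyclic groups, lift the local conjugators to a single $P\in SL(2,\widehat{\Z})$ by compactness, and write down an explicit continuous isomorphism $(v,n)\mapsto(Pv,n)$. This is more direct and more constructive for torus bundles, and it makes transparent that the phenomenon is purely arithmetic. The paper's route, on the other hand, proves something strictly stronger along the way: \emph{any} closed orientable $3$-manifolds sharing all Turaev--Viro invariants have fundamental groups with isomorphic profinite completions (cf.\ the ``only if'' direction of Remark~1.7), not just torus bundles with locally conjugate monodromies.

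One minor remark on your compactness step: the transition maps $X_{nm}\to X_n$ need not be surjective, but the inverse limit of a cofiltered system of nonempty finite sets is nonempty, which is all you need; your parenthetical already acknowledges this.
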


This consequence was independently noticed by G.Masbaum. 

We don't know if the $SU(2)$ Turaev-Viro invariants alone determine 
already the profinite completion of the fundamental group. 

\begin{remark}
We expect that the topological content of the Turaev-Viro invariants 
is precisely this kind of arithmetic information. 
An over-optimistic conjecture would be that two  
closed irreducible geometric 3-manifolds $M$ and $N$ with infinite 
fundamental groups define the same class in ${\mathcal X}^{TV}(M)$ 
if and only if  the profinite completions of their 
fundamental groups are isomorphic. The "only if" part is immediate 
(see the proof of Corollary \ref{profinite}).  In particular, if the 
closed hyperbolic 3-manifolds $M$ is determined up to homeomorphism 
by the profinite completion of its fundamental group, then 
${\mathcal X}^{TV}(M)$ will be a singleton. 
This would connect the quantum invariants to some 
version of Grothendieck's problem for 3-manifold groups  
which is stated in \cite{LR}.   
\end{remark}

Eventually, the result of Theorem \ref{TVagree} 
can be improved (with a loss of effectivity),  as follows: 

\begin{theorem}\label{RTagree}
There exist infinitely many pairs of Anosov matrices $A$, $B$ such that 
$M_A$ and $M_B$ have non-isomorphic fundamental groups 
although for every modular tensor category 
${\mathcal C}$ their Reshetikhin-Turaev invariants agree: 
\begin{equation} 
RT_{\mathcal C}(M_A)=RT_{\mathcal C}(M_B)
\end{equation} 
The simplest four examples are the following:  
\begin{equation}
A=
\left(\begin{array}{cc}
1 & 21 \\
21 & 442 \\
\end{array}
\right), \, B=\left(\begin{array}{cc}
106 & 189 \\
189 & 337  \\
\end{array}\right),
\end{equation}
\begin{equation}
A=
\left(\begin{array}{cc}
1 & 51 \\
51 & 2602 \\
\end{array}
\right), \, B=\left(\begin{array}{cc}
562 & 1479 \\
1479 & 2041 \\
\end{array}\right),
\end{equation}
\begin{equation}
A=
\left(\begin{array}{cc}
1 & 53 \\
53 & 2810 \\
\end{array}
\right), \, B=\left(\begin{array}{cc}
425 & 1007 \\
1007 & 2386 \\
\end{array}\right),
\end{equation}
\begin{equation}
A=
\left(\begin{array}{cc}
1 & 55 \\
55 & 3026 \\
\end{array}
\right), \, B=\left(\begin{array}{cc}
881 & 1375 \\
1375 & 2146 \\
\end{array}\right).
\end{equation}
\end{theorem}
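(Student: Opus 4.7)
The plan is to upgrade the TV-argument of Theorem~\ref{TVagree} so that it handles the framing anomaly inherent to Reshetikhin-Turaev invariants. Given a modular tensor category $\mathcal{C}$, the TV-invariant of a torus bundle $M_A$ is, via the congruence subgroup property of Coste-Gannon and Ng-Schauenburg, essentially a character value of $SL(2,\Z)$ factoring through a finite congruence quotient $SL(2,\Z/m\Z)$. For the RT-invariant there is an extra framing contribution, so the natural object is a genuine linear representation of a central extension $\widetilde{SL(2,\Z)}$ whose restriction to the relevant congruence kernel is trivial. Hence $RT_{\mathcal{C}}(M_A)=RT_{\mathcal{C}}(M_B)$ for every modular tensor category $\mathcal{C}$ should be equivalent to conjugacy of canonical lifts of $A$ and $B$ in every such central congruence extension.

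First I would prove the RT-analogue of Proposition~\ref{trace}. Using an explicit surgery presentation of $M_A$ coming from the canonical $S,T$ presentation of the mapping class group of the torus, the RT-invariant expresses as the trace of a lift $\widetilde{A}$ in a modular representation of the central extension, multiplied by a framing anomaly factor depending only on the signature of a bounding $4$-manifold. Since this signature can be read off (through the Meyer cocycle) from $\mathrm{Tr}(A)$ and the framing of the fiber, two matrices $A,B$ sharing the same trace automatically share the same anomaly factor, and the equality of RT-invariants reduces to the equality of the lifted traces, hence to conjugacy in the central congruence quotients.

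Second, I would verify directly that each of the four explicit pairs of symmetric matrices in the statement satisfies all the required conditions: the same trace (hence the same anomaly), the same characteristic polynomial, conjugate in every central congruence quotient, but not conjugate in $GL(2,\Z)$ (so $\Gamma_A\not\cong\Gamma_B$ by Proposition~\ref{classif}); the Anosov condition is immediate from the trace being much larger than $2$. The symmetry of the matrices makes the computation of the canonical lifts manageable, and one checks the central-extension conjugacy either by brute force on the relevant finite quotients or, more conceptually, by exploiting the Latimer-MacDuffee-Taussky correspondence to write each pair as two different ideal classes with the same image under $\iota$ and compatible lifting data.

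The main obstacle, and the reason the infinite family is produced only ineffectively, is that one needs infinitely many pairs satisfying the strictly stronger condition of conjugacy in every \emph{central} congruence extension, not merely in every congruence quotient as in Proposition~\ref{noteq}. To overcome this I would apply the results of Platonov and Rapinchuk on the genus problem in arithmetic groups (as used in Corollary~\ref{plat}), now to the relevant central extension of $SL(2,\Z)$ in place of $SL(2,\Z)$ itself. These results yield an unbounded supply of non-conjugate pairs of matrices sharing all relevant lifted congruence data. Combined with Proposition~\ref{classif} to distinguish the fundamental groups and with the RT-analogue of Proposition~\ref{trace} to match the invariants, this gives the claimed infinite family, at the cost of losing the explicit description available in the TV case.
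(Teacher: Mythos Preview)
Your proposal contains a genuine error at its pivotal step. You claim that the anomaly factor ``can be read off (through the Meyer cocycle) from $\mathrm{Tr}(A)$'' and hence that two matrices with the same trace automatically share the same anomaly factor. This is false. The relevant correction term is the modified Meyer function $\varphi$ (Proposition~\ref{invar}), which is a conjugacy invariant on $SL(2,\Z)$ but is \emph{not} a function of the trace. Indeed, for the very pairs constructed in Proposition~\ref{noteq} one has $\varphi(A)-\varphi(B)=2k(q^2-1)(v+1)\neq 0$ despite $\mathrm{Tr}(A)=\mathrm{Tr}(B)$, and for Stebe's example $\varphi(A)=-1$ while $\varphi(B)=-21$. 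So equal trace does not kill the anomaly discrepancy, and your reduction collapses.

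The paper's actual mechanism is quite different and rests on a specific arithmetic idea you are missing: one forces $\varphi(A)=\varphi(B)=0$ by requiring $A$ and $B$ to be \emph{reciprocal}, i.e.\ conjugate in $SL(2,\Z)$ to their own inverses. Since $\varphi$ is a conjugacy-invariant quasi-homomorphism with $\varphi(A^{-1})=-\varphi(A)$, reciprocity yields $\varphi=0$. One then needs reciprocal, inert matrices in the same genus that are not $SL(2,\Z)$-conjugate; these are produced via Gauss's theory of ambiguous classes in the principal genus of $C_D$ for discriminants $D\in\mathcal{D}^-$ with nontrivial $4$-rank (computed through R\'edei matrices). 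The explicit symmetric examples arise from $D=u^2+4$ with $u\in\{21,51,53,55\}$, and the infinite family comes from the Fouvry--Klueners density theorem on $4$-ranks, not from Platonov--Rapinchuk (the latter is used only for Corollary~\ref{plat}, which concerns $TV$-classes). Your framework of ``central congruence extensions'' is too vague to capture the precise condition $\varphi(A)=\varphi(B)$, and your proposed route to the infinite family targets the wrong tool.
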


The equivalence relation on torus bundles induced by 
the equality of all Turaev-Viro invariants is the local equivalence 
of matrices determining a fixed {\em genus}, in the sense 
studied by Platonov  and Rapinchuk (see \cite{Platonov,Rapin}, 
\cite[ section 8.8.5]{PR}). 
Specifically, $M_B$ and $M_A$ represent the same 
class in ${\mathcal X}^{TV}(M)$  if and only if 
$A$ and $B$ are locally conjugate, namely their images  mod $m$ are 
conjugate in  
$GL(2,\Z/m\Z)$, for any positive integer $m$. Notice that this 
implies automatically that $A$ and $B$ are conjugate in  $GL(2,\Q)$.
     
A related equivalence relation is the one corresponding to the 
{\em Pickel genus} of groups (see \cite{Pickel}). 
Two finitely generated groups are in the same 
Pickel genus if the corresponding sets of finite quotients are the same. 
This is equivalent, following a deep result 
of Nikolov and Segal (see \cite{NiS}) to the fact that 
their profinite completions are isomorphic.  
The groups of torus bundles $\pi_1(M_B)$ and $\pi_1(M_A)$ 
have isomorphic profinite completions if and only if 
the subgroups $\langle A \rangle$ and $\langle B \rangle$ 
are locally conjugate, namely their images mod $m$ are conjugate in  
$GL(2,\Z/m\Z)$, for any positive integer $m$. 
This is coarser than the former equivalence relation.

\vspace{0.2cm}
{\bf Acknowledgements.} We are indebted to Christian Blanchet, Thierry Barbot,  
Francois Costantino, Michael Freedman, 
Terry Gannon, Jurgen Klueners, Greg Kuperberg, Gregor Masbaum, 
Greg McShane, Alan Reid, Chris Schommer-Pries, Vlad Sergiescu, Peter 
Stevenhagen, Vladimir Turaev, Zhenghan Wang and Maxime Wolff for 
useful discussions and comments.

\section{Preliminaries about modular tensor categories}

\subsection{Fusion  categories} 
For simplicity we will consider only {\em strict} monoidal categories below, 
meaning that the associativity morphisms are identities.  We follow 
the definitions from \cite{BK,Mu}. 

A {\em left/right rigid} monoidal category is a 
a strict
monoidal category ${\mathcal C}$ 
with unit object ${\mathbf 1}$ such that to each
object $X\in \mathrm{Ob}({\mathcal C})$ there are associated a dual
object $X^*\in \mathrm{Ob}({\mathcal C})$ and four morphisms
\begin{eqnarray*}
& \mathrm{ev}_X \colon X^*\otimes X \to{\mathbf 1},  \qquad \mathrm{coev}_X\colon {\mathbf 1}  \to X \otimes X^*,\\
&   \widetilde{\mathrm{ev}}_X \colon X\otimes X^* \to{\mathbf 1}, \qquad   
\widetilde{\mathrm{coev}}_X\colon {\mathbf 1}  \to X^* \otimes X \nonumber 
\end{eqnarray*}
such that, for every $X\in
\mathrm{Ob}({\mathcal C})$, the pair $(\mathrm{ev}_X,\mathrm{coev}_X)$ is a 
left duality for $X$ and the pair $(\widetilde{\mathrm{ev}}_X,\widetilde{\mathrm{coev}}_X)$ is a right
duality for $X$, namely: 
\begin{equation*}
({\mathbf 1}_X \otimes \mathrm{ev}_X)(\mathrm{coev}_X \otimes {\mathbf 1}_X)={\mathbf 1}_X  \quad  {\rm {and}} \quad (\mathrm{ev}_X \otimes {\mathbf 1}_{X^*})({\mathbf 1}_{X^*} \otimes \mathrm{coev}_X)={\mathbf 1}_{X^*}
\end{equation*}
The category is rigid if it is both left and right rigid. 

A {\em pivotal} category is a left rigid monoidal category equipped with 
an isomorphism $j$ of monoidal functors between 
identity and $(-)^{**}$, called pivotal structure. 
One should notice that the formulas  
\begin{equation*}
\widetilde{\mathrm{ev}}_X =
\mathrm{coev}_X(\mathbf 1_{X^*}\otimes j_X^{-1}), \: 
\widetilde{\mathrm{coev}}_X= 
(j_X\otimes \mathbf 1_{X^*})\mathrm{ev}_X 
\end{equation*}
define a right duality so that a pivotal category is rigid. 

It is known that every pivotal category is equivalent to a strict pivotal 
category, namely one where the associativity isomorphisms, the pivotal 
structure and the canonical isomorphisms 
$(V\otimes W)^*\to W^*\otimes V^*$ are identities. 

The morphisms $\mathrm{ev}_{\mathbf 1}$ and  $\mathrm{coev}_{\mathbf 1}$ (respectively,
$\widetilde{\mathrm{ev}}_{\mathbf 1}$ and  $\widetilde{\mathrm{coev}}_{\mathbf 1}$) are mutually inverse isomorphisms and 
$\mathrm{ev}_{\mathbf 1}=\widetilde{\mathrm{ev}}_{\mathbf 1} \colon {\mathbf 1}^* \to {\mathbf 1}$. 

Now, for an endomorphism $f $ of an object $X$ of   
a pivotal category ${\mathcal C}$, one defines the
{\em left/right traces} ${\rm tr}_l(f), {\rm tr}_r(f) \in
\mathrm{End}_{\mathcal C}({\mathbf 1})$ by
\begin{equation*}
{\rm tr}_l(f)=\mathrm{ev}_X({\mathbf 1}_{\ldual{X}} \otimes f) \widetilde{\mathrm{coev}_X  \quad {\text
{and}}\quad
 {\rm tr}_r(f)=\widetilde{\mathrm{ev}}_X( f \otimes {\mathbf 1}_{\ldual{X}}) \mathrm{coev}}_X  
\end{equation*}
Both traces are symmetric: ${\rm tr}_l(gh)={\rm tr}_l(hg)$ and $
{\rm tr}_r(gh)={\rm tr}_r(hg)$ for any morphisms $g\colon X \to Y$ and   $h\colon Y
\to X$  in ${\mathcal C}$. Also ${\rm tr}_l(f)={\rm tr}_r(\ldual{f})={\rm tr}_l(f^{**})$ for
any endomorphism $f $ of an object (and similarly for $l$ exchanged with $r$). 

The left and right dimensions of   $X\in \mathrm{Ob} ({\mathcal C})$
are defined by $ \dim_l(X)={\rm tr}_l({\mathbf 1}_X) $ and $
\dim_r(X)={\rm tr}_r({\mathbf 1}_X) $.  
Note that isomorphic objects have the same
dimensions and $\dim_l({\mathbf 1})=\dim_r({\mathbf 1})={\mathbf 1}_{{\mathbf 1}}$.

A {\em spherical category} is a pivotal category whose left and
right traces are equal, i.e.,  ${\rm tr}_l(f)={\rm tr}_r(f)$ for every
endomorphism $f$ of an object. Then they are 
denoted  ${\rm tr}(f)$ and called the trace of $f$. The
left (and right) dimensions of an object $X$ are denoted  $\dim(X)$
and called the dimension of $X$. In a 
(strict) spherical category we can make free use of the 
graphical calculus. 

Let $\mathbb K$ be a field, which for the moment is not supposed 
to be of characteristic zero, although in the next section we will consider 
$\mathbb K=\C$. 

A monoidal {\em ${\mathbb K}$-linear category} is a monoidal category 
${\mathcal C}$ such that its Hom-sets are (left) ${\mathbb K}$-modules and
the composition and monoidal product of morphisms are ${\mathbb K}$-bilinear. 
An  object $V\in \rm{Ob}(\mathcal C)$ is called {\em simple} 
if the map ${\mathbb K} \to \mathrm{End}_{\mathcal C}({\mathbf 1}), 
a \mapsto a \, {\mathbf 1}_{\mathbf 1}$  is a
${\mathbb K}$-algebra isomorphism. 

An additive category is said to be {\em semisimple} 
if every object is a direct sum 
of finitely many simple objects. In the case of Ab-categories from \cite{Tu} 
we can weaken our requirements by asking that every object 
be dominated by finitely many simple objects.    
A monoidal  ${\mathbb K}$-linear category is called {\em semisimple} if the 
underlying $\mathbb K$-linear category is semisimple with 
finite dimensional Hom spaces and $\mathbf 1$ is a 
simple object. 

Now a {\em fusion} category over $\mathbb K$ is a rigid 
semisimple $\mathbb K$-linear category ${\mathcal C}$ with 
finitely many simple objects. The fusion categories 
which are considered in the next sections will always be 
spherical. 

A monoidal category ${\mathcal C}$ is {\em braided} if there exist 
natural isomorphisms $c_{V,W}:V\otimes W\to W\otimes V$ for every objects 
$V,W$,  such that for any $U,V,W\in {\rm Ob}(\mathcal C)$ we have: 
\begin{equation*}
c_{U,V\otimes W}=(\mathbf 1_{V}\otimes c_{U,W})(c_{U,V}\otimes 
\mathbf 1_{W}), \: 
c_{U\otimes V,W}=(c_{U,W}\otimes 
\mathbf 1_{V})(\mathbf 1_{U}\otimes c_{V,W})
\end{equation*}

Let now ${\mathcal C}$ be a left rigid braided monoidal category. 
We do not require that $V^{**}=V$. 
A twist of ${\mathcal C}$ is an automorphism $\theta$ of the identity 
functor of ${\mathcal C}$ satisfying 
\begin{equation*}
\theta_{V\otimes W}=c_{W,V}c_{V,W}(\theta_V\otimes\theta_W), \: \rm{ and } 
\:\: \theta_{\mathbf 1}={\mathbf 1}_{\mathbf 1}
\end{equation*}
The twist $\theta$ is a {\em ribbon} structure on $(\mathcal C, c)$ 
if it also satisfies $\theta_V^*=\theta_{V^*}$ for every 
$V\in \rm{Ob}(\mathcal C)$, and 
the (left) duality is compatible with the ribbon and twist structures, namely:
\begin{equation*}
(\theta_V\otimes \mathbf 1_{V^*}){\rm coev}_V=
(\mathbf 1_V\otimes \theta_{V^*}){\rm coev}_V
 \end{equation*}
In this case $(\mathcal C, c, \theta)$ is called a {\em ribbon} category. 
In a ribbon category one associates naturally a pivotal structure 
by using the (canonical) isomorphism $u_X:X\to X^{**}$ given by:
\begin{equation*}
u_X=({\rm ev}_{X^*}\otimes {\mathbf 1}_X)({\mathbf 1}_{X^*}\otimes c^{-1}_{X,X^{**}})({\rm coev}_X\otimes {\mathbf 1}_{X^{**}})
\end{equation*}
and setting $\theta=u^{-1}j$. 
Moreover this pivotal structure $j$ is spherical. 

A {\em modular tensor category} over $\mathbb K$ is a ribbon fusion category 
$({\mathcal A},c,\theta)$ over $\mathbb K$ such that the matrix $S$
having entries $S_{ij}=tr((c_{U_j,U_i^*}c_{U_i^*,U_j})$ is non-singular, where 
$i,j\in I$ and $I$ is the set indexing the simple objects 
$U_i, i\in I$ in ${\mathcal A}$. This  matrix 
is called the $S$-matrix of the category ${\mathcal A}$. 
Notice that $I$ has induced 
a duality $*$ such that $U_{i^*}=U_i^*$, for any $i\in I$ and 
there exists a label (also called color) $0\in I$ such that 
$U_0=\mathbf 1$.  
Since the object $U_i$ is simple the twist $\theta_{U_i}$ acts on
$U_i$ as a scalar $\omega_i\in \mathbb K$.  

The (left) {\em Drinfeld double} (also called the center) 
of a (strict) monoidal category ${\mathcal C}$ is a category $D(\mathcal C)$ 
whose objects are pairs $(V, \sigma_V)$, where $V\in \rm{Ob}(\mathcal C)$ 
and  the half-braiding $\sigma_V(W):V\otimes W\times W\otimes V$ are natural 
isomorphisms satisfying for every $U,V,W\in \rm{Ob}(\mathcal C)$ the 
identities:   
\begin{equation*}
(V\otimes \sigma_U(W))(\sigma_{U}(V)\otimes W)=
\sigma_U(V\otimes W), \: \sigma_V(\mathbf 1)=\mathbf 1_X
\end{equation*}
There is a natural monoidal structure on $D(\mathcal C)$ by defining 
the tensor product 
$(U, \sigma_U)\otimes (V,\sigma_V)=
(U\otimes V, \sigma_{U\otimes V})$, where 
\begin{equation*}
\sigma_{U\otimes V}(W)=(\sigma_U(W)\otimes V)(U\otimes \sigma_V(W))
\end{equation*}
and the unit object $(\mathbf 1, \sigma_{\mathbf 1})$, where 
$\sigma_{\mathbf 1}(V)={\mathbf 1}_V$, for any $U,V,W\in \rm{Ob}(\mathcal C)$. 
More interesting is the fact that $D(\mathcal C)$ has a braiding given by 
$c_{(V, \sigma_V), (W, \sigma_W)}=\sigma_{V}(W)$ so that 
${\mathcal C}$ is a braided monoidal category. 
If ${\mathcal C}$ is left rigid/pivotal/spherical 
then  $D(\mathcal C)$ is also left rigid/pivotal/spherical respectively.

\subsection{$SL(2,\Z)$ representations  from modular tensor categories}
Any modular tensor category ${\mathcal C}$ defined over the algebraically closed  field $\mathbb K$ has associated 
the modular data (see \cite{G}), which contains a {\em projective} 
representation $\overline{\rho}_{\mathcal C}:SL(2,\Z)\to PGL(\mathcal K_0(
\mathcal C))$, where $\mathcal K_0(
\mathcal C))$ is the Grothendieck ring of $\mathcal C$ with $\C$-coefficients. 
However, we have slightly more than that, namely a lift of  
$\overline{\rho}_{\mathcal C}$ to an almost linear representation, 
by means of the matrices $S$ and $T$. The almost linear representation 
comes with a 2-cocycle which was completely described by Turaev. 
An essential feature of the genus 1 situation is that projective 
representations could always be lifted (in more than one way) 
to genuine linear representations, which contrasts with the higher genus case. 

The matrices entering in the definition of $\overline{\rho}_{\mathcal C}$ 
are the $S$-matrix  defined above and  the $T$-matrix associated to the twist. 
Specifically, $T$ has the entries $T_{ij}=\omega_i\delta_{ij}$, $i,j\in I$.  
Moreover there is also the so-called charge conjugation 
matrix $C$ having entries  $C_{ij}=\delta_{i\,j^*}$, $i,j\in I$, which is actually $S^2$.
 
The Gauss sums of ${\mathcal C}$ are given by 
$p_{\mathcal C}^{\pm}=\sum_{i\in I}\omega_i^{\pm 1}\dim(U_i)^2$ and these are non-zero scalars satisfying: 
\begin{equation}
p_{\mathcal C}^{+}p_{\mathcal C}^{-}=\sum_{i\in I} \dim(U_i)^2=\dim(\mathcal C)
\end{equation}

In \cite{Tu} Turaev used the notation 
$\Delta_{\mathcal C}=p_{\mathcal C}^{-}$ so that  
$p_{\mathcal C}^{+}= \dim(\mathcal C)\Delta_{\mathcal C}^{-1}$. Further 
one chooses a {\em rank} (also called quantum order), which is an element 
${\lambda}\in \mathbb K$ such that 
${\lambda}^2=\dim {\mathcal C}$. This was denoted by ${\mathcal D}$ in 
\cite{Tu}.  

The group $SL(2,\Z)$ is generated by the matrices 
$\mathfrak s=\left(\begin{array}{cc}
0 & -1 \\
1 & 0 \\
\end{array}
\right)$ and 
$\mathfrak t=\left(\begin{array}{cc}
1 & 1 \\
0 & 1 \\
\end{array}
\right)$. The usual presentation of $SL(2,\Z)$ in the generators 
 $\mathfrak s, \mathfrak t$ has the relations 
$(\mathfrak s\mathfrak t)^3=\mathfrak s^2$ and $\mathfrak s^4=1$. 

The projective representation 
 $\overline{\rho}_{\mathcal C}:SL(2,\Z)\to PGL(\mathcal K_{0}(C))$
is defined by 
\begin{equation}
\overline{\rho}_{\mathcal C}(\mathfrak s)=S, \,\,
\overline{\rho}_{\mathcal C}(\mathfrak t)=T
\end{equation}

However the choice of a rank ${\lambda}$ and a third root of unity 
$\zeta\in \mathbb K$ of the anomaly 
$\zeta^3={\mathcal D}\Delta^{-1}=p_{\mathcal C}^{+}{\lambda}^{-1}$  enables us to define a lift of $\overline{\rho}_{\mathcal C}$ 
to an ordinary linear representation
$\rho_{\mathcal C}^{\lambda, \zeta}:SL(2,\Z)\to GL(\mathcal K_{0}(C))$ 
by setting 
\begin{equation}
\rho_{\mathcal C}^{\lambda, \zeta}(\mathfrak s)=\lambda^{-1}S, \,\,
\rho_{\mathcal C}^{\lambda, \zeta}(\mathfrak t)=\zeta^{-1}T
\end{equation}

These lifts are called the modular representations associated to 
$\mathcal C$. It is known that, given a rank $\lambda$ then the 
modular tensor category defines a TQFT with anomaly in the group generated 
by $\zeta^3$, so that 3-manifold invariants associated to the data 
$(\mathcal C,\lambda)$ do not depend on the particular choice of $\zeta$.

\section{Proof of Proposition \ref{trace}}

\subsection{TQFT coming from centers of spherical fusion categories}
In the case when the modular tensor category 
is the Drinfeld double $D({\mathcal C})$
of a spherical fusion category $\mathcal C$ a number of simplifications occur. 

For every $SL(2,\Z)$ representation $\rho$ we define its dual 
representation $\widetilde{\rho}$ by means of 
$\widetilde{\rho}(x)=\rho(JxJ^{-1})$, where $J=\left(\begin{array}{cc}
-1 & 0 \\
0 & 1 \\
\end{array}
\right)\in GL(2,\Z)$ acts by conjugacy as an outer automorphism 
of $SL(2,\Z)$. 

Notice that in this case we have the following: 

\begin{lemma}\label{double}
\begin{enumerate} 
\item The anomaly of the TQFT coming from $D({\mathcal C})$ is trivial, i.e. 
$\zeta^3=1$ and thus there exists a privileged modular representation 
$\rho_{\mathcal C}^{\lambda, 1}$. 
\item Further we have 
$\rho_{D(\mathcal C)}^{\lambda,1}=\rho_{\mathcal C}^{\lambda,\zeta}\otimes 
\widetilde{\rho^{\lambda,\zeta}}_{\mathcal C}$. Here $\zeta$ is arbitrary and in fact 
the right hand side tensor product is well-defined even when 
we have only projective representations.  
\end{enumerate}
\end{lemma}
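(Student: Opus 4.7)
The plan is to verify each of the two assertions separately by reducing them to known structural facts about the Drinfeld centre of a spherical fusion category.

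For part (1), recall that the anomaly is $\zeta^{3} = p^{+}_{D(\mathcal C)}\,\lambda_{D(\mathcal C)}^{-1}$, so the claim amounts to exhibiting a rank $\lambda_{D(\mathcal C)}$ with $\lambda_{D(\mathcal C)} = p^{+}_{D(\mathcal C)}$. I would invoke two computations for the centre, both due to M\"uger and reproved by Ng--Schauenburg: first, $\dim D(\mathcal C) = (\dim \mathcal C)^{2}$, which makes $\lambda_{D(\mathcal C)} := \dim \mathcal C$ a legitimate rank; and second, the Gauss sum identity $p^{\pm}_{D(\mathcal C)} = \dim \mathcal C$. Substituting these in yields $\zeta^{3} = 1$, and selecting $\zeta = 1$ singles out the privileged linear lift $\rho_{D(\mathcal C)}^{\lambda,1}$.

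For part (2), the goal is to establish that the modular data of $D(\mathcal C)$ factorises so that $\lambda_{D(\mathcal C)}^{-1}S_{D(\mathcal C)} = S_{\mathcal C}\otimes S_{\mathcal C}^{-1}$ and $T_{D(\mathcal C)} = T_{\mathcal C}\otimes T_{\mathcal C}^{-1}$. When $\mathcal C$ is itself modular this is immediate from M\"uger's decomposition $D(\mathcal C) \simeq \mathcal C \boxtimes \overline{\mathcal C}$, the mirror $\overline{\mathcal C}$ carrying modular data $(S_{\mathcal C}^{-1}, T_{\mathcal C}^{-1})$. A short direct computation using $J \mathfrak{s} J^{-1} = \mathfrak{s}^{-1}$ and $J \mathfrak{t} J^{-1} = \mathfrak{t}^{-1}$ gives $\widetilde{\rho_{\mathcal C}^{\lambda,\zeta}}(\mathfrak{s}) = \lambda S_{\mathcal C}^{-1}$ and $\widetilde{\rho_{\mathcal C}^{\lambda,\zeta}}(\mathfrak{t}) = \zeta T_{\mathcal C}^{-1}$, so that in the tensor product $\rho_{\mathcal C}^{\lambda,\zeta}\otimes \widetilde{\rho_{\mathcal C}^{\lambda,\zeta}}$ the scalar factors $\lambda^{\mp 1}$ and $\zeta^{\mp 1}$ cancel pairwise. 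This cancellation has two consequences simultaneously: it delivers the factorisation above on the nose, thereby identifying the tensor product with $\rho_{D(\mathcal C)}^{\lambda,1}$; and it shows that the product depends on neither $\lambda$ nor $\zeta$, explaining why the right-hand side is well-defined even when $\rho_{\mathcal C}^{\lambda,\zeta}$ is only a projective representation.

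The main obstacle I anticipate is justifying the factorisation of $S_{D(\mathcal C)}$ and $T_{D(\mathcal C)}$ in the non-modular case, since then $\mathcal C$ itself carries no intrinsic linear $SL(2,\Z)$-action and M\"uger's $\mathcal C \boxtimes \overline{\mathcal C}$ identification is unavailable. I would circumvent this by working with the projective modular representation of $D(\mathcal C)$ directly and appealing to the tube-algebra (or half-braiding) description of the centre's simple objects, which produces the required block structure of $S_{D(\mathcal C)}$ and $T_{D(\mathcal C)}$ as formal tensor products of matrices attached to $\mathcal C$, the normalising scalars then collapsing by the cancellation argument above.
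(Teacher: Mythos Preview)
Your argument is essentially correct and in fact goes well beyond what the paper does: the paper's entire proof is the single line ``See \cite{NS}, Lemma 6.2.''  What you outline is precisely an unpacking of that reference, namely M\"uger's identities $\dim D(\mathcal C)=(\dim\mathcal C)^2$ and $p^{\pm}_{D(\mathcal C)}=\dim\mathcal C$ for part~(1), and the factorisation $D(\mathcal C)\simeq \mathcal C\boxtimes\overline{\mathcal C}$ together with the mirror modular data for part~(2).  The cancellation of the $\lambda^{\pm 1}$ and $\zeta^{\pm 1}$ factors that you describe is exactly the mechanism used later in the paper (in the proof of Proposition~\ref{RTV}) to identify $\rho_{\overline{\mathcal C}}^{\lambda,\zeta^{-1}}$ with $\widetilde{\rho_{\mathcal C}^{\lambda,\zeta}}$.

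Your worry about the non-modular case in part~(2) is unnecessary: the right-hand side $\rho_{\mathcal C}^{\lambda,\zeta}\otimes\widetilde{\rho_{\mathcal C}^{\lambda,\zeta}}$ presupposes that $\rho_{\mathcal C}^{\lambda,\zeta}$ exists, which by the paper's own definitions requires $\mathcal C$ to be modular (so that the $S$-matrix is invertible).  Part~(1) is stated for an arbitrary spherical fusion $\mathcal C$, but part~(2) only makes sense, and is only ever applied in the paper (Proposition~\ref{RTV}, Corollary~\ref{TV}), under the additional hypothesis that $\mathcal C$ is itself modular.  So there is no gap to fill via tube algebras; you can simply drop that last paragraph.
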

\begin{proof}
See (\cite{NS}, Lemma 6.2). 
\end{proof}

The invariants of mapping tori have a very simple expression when 
the TQFT is anomaly-free. In fact we have the following well-known result: 
\begin{lemma}\label{anomalyfree}
Assume that  the TQFT associated to  the modular tensor 
category ${\mathcal C}$ is anomaly-free, namely 
that $\zeta^3=1$. Then the invariant of the mapping torus $M_{A}$ 
of $A\in SL(2,\Z)$ is expressed as: 
\begin{equation}
RT_{\mathcal C}(M_{A})={\rm Tr}(\rho_{\mathcal C}^{\lambda,1}(A)) 
\end{equation}
\end{lemma}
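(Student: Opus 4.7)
The proof plan follows the standard TQFT trace-formula argument for mapping tori. First I would recall the identification, built into the construction of the Reshetikhin-Turaev functor associated to the data $(\mathcal{C},\lambda)$, of the Hilbert space assigned to the standard torus with $\mathcal{K}_0(\mathcal{C})\otimes_{\Z}\C$, together with the action of $\mathrm{MCG}(T^2)=SL(2,\Z)$ on it. In the anomaly-free setting $\zeta^3=1$ this action lifts to a genuine linear representation, and by the very way the generators $\mathfrak{s}$ and $\mathfrak{t}$ act (as $\lambda^{-1}S$ and $T$ respectively), this representation is exactly $\rho_{\mathcal{C}}^{\lambda,1}$.

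Next I would decompose $M_A$ along an embedded fiber torus. This exhibits $M_A$ as the closure of the mapping cylinder of the self-homeomorphism of $T^2$ corresponding to $A$: namely, $M_A$ is obtained from $T^2\times [0,1]$ by gluing the two boundary tori via $A$. By the standard gluing axiom of the TQFT (applied to a cobordism that is closed up by identifying its two boundary components through an automorphism), the invariant of such a self-glued cobordism is the trace, on $V(T^2)$, of the endomorphism induced by the mapping cylinder. This endomorphism is precisely $\rho_{\mathcal{C}}^{\lambda,1}(A)$, so $RT_{\mathcal{C}}(M_A)=\mathrm{Tr}(\rho_{\mathcal{C}}^{\lambda,1}(A))$.

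The subtle point, and where the anomaly-free hypothesis is used in an essential way, is that for general modular tensor categories the gluing formula for $RT_{\mathcal{C}}$ holds only up to a scalar factor controlled by the anomaly cocycle, whose cube is $\zeta^3=p_{\mathcal{C}}^{+}\lambda^{-1}$. Under the assumption $\zeta^3=1$ this anomaly trivializes and the gluing identity becomes exact. The main obstacle is therefore just a bookkeeping issue: verifying that the normalizations $\lambda^{-1}$ in front of $S$ and $1$ (i.e.\ $\zeta^{-1}=1$) in front of $T$ used in defining $\rho_{\mathcal{C}}^{\lambda,1}$ are precisely those coming from the construction of $RT_{\mathcal{C}}$ on a surgery presentation of $M_A$. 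Once the normalizations are aligned, the lemma reduces to the general trace formula for partition functions of mapping tori in anomaly-free TQFTs.
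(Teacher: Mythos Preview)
Your proposal is correct and follows essentially the same route as the paper: both invoke Turaev's almost-linear representation on the space of conformal blocks of the torus (whose projective cocycle is controlled by $\zeta^3$ via the Maslov index), observe that for $\zeta^3=1$ it coincides with the genuine linear representation $\rho_{\mathcal C}^{\lambda,1}$, and then apply the standard trace formula for the mapping torus. The only difference is cosmetic: the paper cites the precise locations in Turaev's book (section IV.5, (5.1.a) for the cocycle law and section III.2.8, Ex.~1 for the trace identity), whereas you phrase the same facts in the language of the TQFT gluing axiom.
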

\begin{proof}
For the sake of completeness here is the proof. Turaev defined in 
(\cite{Tu}, section IV.5, (5.1.a)) an almost 
linear representation $\epsilon:SL(2,\Z)\to GL({\mathcal K}_{\mathbb K}({\mathcal C}))$ satisfying the cocycle law: 
\begin{equation}
\epsilon(A_1A_2)=\zeta^{3\mu({A_2}_*(L), L, {A_1}_*^{-1}(L))}\epsilon(A_1)\epsilon(A_2)
\end{equation}
where $\mu(L_1,L_2,L_3)$ denotes the Maslov index (see \cite{Tu}, section 
IV.3, p.179) of the triple $(L_1,L_2,L_3)$ of 
Lagrangian subspaces of $H_1(\Sigma_g;\R)$ and $L$ a fixed Lagrangian subspace. 
Further $\epsilon$ is determined by  its values on the generators 
$\epsilon({\mathfrak s})=\lambda^{-1}S$ and 
$\epsilon({\mathfrak t})=T$. If $\zeta^3=1$ then $\epsilon$ is a 
linear representation which coincides with $\rho_{\mathcal C}^{\lambda,1}$. 
Moreover, one also knows from (\cite{Tu}, section III.2.8, Ex.1) that 
\[RT_{\mathcal C}(M_{A})={\rm Tr}(\epsilon(A))\]
This proves the claim.   
\end{proof}

We will prove now: 
\begin{proposition}\label{modularaf}
Let $\mathcal C$ be an anomaly-free modular tensor category such that  
the  modular representation 
$\rho_{\mathcal C}^{\lambda, 1}$ factors through $SL(2,\Z/N\Z)$.
Let $A$ and $B$ be two integral matrices from $SL(2,\Z)$ 
whose reductions mod $N$ are conjugate. 
Then $RT_{\mathcal C}(M_A)=RT_{\mathcal C}(M_B)$.  
\end{proposition}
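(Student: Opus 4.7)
The plan is to reduce the statement to the elementary fact that conjugate matrices have equal traces, combined with the hypothesis that the modular representation factors through a finite congruence quotient.

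First, I invoke Lemma \ref{anomalyfree}: since $\mathcal C$ is anomaly-free, we have
\begin{equation*}
RT_{\mathcal C}(M_A)=\mathrm{Tr}(\rho_{\mathcal C}^{\lambda,1}(A)), \qquad RT_{\mathcal C}(M_B)=\mathrm{Tr}(\rho_{\mathcal C}^{\lambda,1}(B)).
\end{equation*}
By the hypothesis that $\rho_{\mathcal C}^{\lambda,1}$ factors through $SL(2,\Z/N\Z)$, we can write $\rho_{\mathcal C}^{\lambda,1}=\overline{\rho}\circ\pi_N$, where $\pi_N:SL(2,\Z)\to SL(2,\Z/N\Z)$ is the reduction mod $N$ and $\overline{\rho}$ is a genuine linear representation of the finite group $SL(2,\Z/N\Z)$ on $\mathcal K_0(\mathcal C)$.

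Next, suppose $g\in SL(2,\Z/N\Z)$ conjugates $\pi_N(A)$ to $\pi_N(B)$, i.e.\ $g\,\pi_N(A)\,g^{-1}=\pi_N(B)$. Applying $\overline{\rho}$ and using that $\overline{\rho}$ is a homomorphism gives $\overline{\rho}(g)\,\overline{\rho}(\pi_N(A))\,\overline{\rho}(g)^{-1}=\overline{\rho}(\pi_N(B))$, i.e.\ $\rho_{\mathcal C}^{\lambda,1}(A)$ and $\rho_{\mathcal C}^{\lambda,1}(B)$ are conjugate in $GL(\mathcal K_0(\mathcal C))$. Since the trace is a class function, this yields $\mathrm{Tr}(\rho_{\mathcal C}^{\lambda,1}(A))=\mathrm{Tr}(\rho_{\mathcal C}^{\lambda,1}(B))$, and therefore $RT_{\mathcal C}(M_A)=RT_{\mathcal C}(M_B)$.

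There is essentially no obstacle in this argument; it is a two-line consequence of Lemma \ref{anomalyfree} and the trace-invariance of conjugation, made possible by the factorization through a finite quotient. The real content lies elsewhere, namely in the (already invoked) theorem of Ng and Schauenburg guaranteeing that for the Drinfeld double $D(\mathcal C)$ of a spherical fusion category such an $N$ exists, and in Lemma \ref{double} which ensures that the TQFT coming from $D(\mathcal C)$ is anomaly-free, so that Lemma \ref{anomalyfree} applies. Combining Proposition \ref{modularaf} with those two ingredients for the double $D(\mathcal C)$ will yield Proposition \ref{trace}.
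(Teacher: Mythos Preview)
Your proof is correct and follows essentially the same approach as the paper's: invoke Lemma~\ref{anomalyfree}, factor the representation through $SL(2,\Z/N\Z)$, and use conjugacy-invariance of the trace. The only minor difference is that the paper lifts the conjugating element from $SL(2,\Z/N\Z)$ back to $SL(2,\Z)$ using the surjectivity of the reduction map $\nu_N$ (proved later as Lemma~\ref{surj}), whereas you work directly with $g\in SL(2,\Z/N\Z)$ and apply the factored representation $\overline{\rho}$ to it; your route is slightly more economical since it does not require that lifting step.
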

\begin{proof}
According to the Lemma \ref{anomalyfree} the invariant 
$RT_{\mathcal C}(M_A)$ is the trace of the endomorphism 
$\rho_{\mathcal C}^{\lambda, \zeta}(A)$. By hypothesis 
$\rho_{\mathcal C}^{\lambda, \zeta}$ factorizes as 
 $\rho_{\mathcal C}^{\lambda, \zeta}=
\rho_{\mathcal C}^{\lambda, \zeta,N}\circ \nu_N $, where 
$\nu_N:SL(2,\Z)\to SL(2,\Z/N\Z)$ is  the homomorphism of 
reduction mod $N$.

Since $\nu_N$ is surjective there exists 
$T\in SL(2,\Z)$ such that $\nu_N(A)=\nu_N(T^{-1}BT)$. 
Therefore 
\begin{equation}
{\rm Tr}(\rho_{\mathcal C}^{\lambda, \zeta}(A))=
{\rm Tr}\left(\rho_{\mathcal C}^{\lambda, \zeta,N}(\nu_N(T))\cdot 
\rho_{\mathcal C}^{\lambda, \zeta,N}(\nu_N(B))\cdot 
\left(\rho_{\mathcal C}^{\lambda, \zeta,N}(\nu_N(T))\right)^{-1}\right)=
{\rm Tr}(\rho_{\mathcal C}^{\lambda, \zeta}(B)) 
\end{equation}
Then Lemma \ref{anomalyfree} yields the equality of quantum invariants of 
$M_A$ and $M_B$. 
\end{proof}

The final ingredient in the proof of Proposition \ref{trace} 
is the following result due to 
Ng and Schauenburg for modular tensor categories 
which are centers of spherical fusion categories (\cite{NS}),  
to Peng Xu for conformal field theories derived from 
vertex operator algebras (see \cite{Xu}) and 
to Coste, Gannon and Bantay for RCFT (see \cite{CG,B}). 
Recall that a {\em congruence} subgroup of $SL(2,\Z)$ is 
the kernel of one of the reducing mod $m$  
homomorphism $SL(2,\Z)\to SL(2,\Z/m\Z)$, for some non-zero integer $m$. 

\begin{theorem}\label{congru}
Let $D(\mathcal C)$ be the Drinfeld double of a spherical fusion category 
${\mathcal C}$. Then the modular representations 
$\rho_{D(\mathcal C)}^{\lambda, 1}$ have the congruence property, namely 
the kernels contain congruence subgroups. 
\end{theorem}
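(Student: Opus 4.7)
The plan is to follow Ng and Schauenburg (\cite{NS}). The first step is to identify the level $N$ of the expected congruence subgroup. Because $\mathcal{C}$ is spherical, $D(\mathcal{C})$ is modular by M\"uger's theorem, and the ribbon twists $\omega_i$ of its simple objects are all roots of unity (a consequence of Vafa's theorem, together with the Etingof--Nikshych--Ostrik analysis of pseudo-unitary fusion categories). Let $N$ be their least common denominator; this is the Frobenius--Schur exponent of $D(\mathcal{C})$, and by construction $T^N = I$. By Lemma \ref{double} one may take $\zeta = 1$, so $\rho_{D(\mathcal{C})}^{\lambda,1}(\mathfrak{t}^N) = I$ automatically.

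The technical heart is to prove $\Gamma(N) \subseteq \ker \rho_{D(\mathcal{C})}^{\lambda,1}$, where $\Gamma(N)$ is the principal congruence subgroup of level $N$. The strategy is to establish a Galois symmetry for the $S$-matrix of $D(\mathcal{C})$: for every $\sigma \in {\rm Gal}(\Q(\zeta_N)/\Q)$, one wants an identity of the form
\begin{equation*}
\sigma(S_{ij}) \;=\; \epsilon_\sigma(i)\, S_{\hat\sigma(i),\,j} \;=\; \epsilon_\sigma(j)\, S_{i,\,\hat\sigma(j)}
\end{equation*}
for an explicit permutation $\hat\sigma$ of the labels of simple objects and signs $\epsilon_\sigma(i)\in \{\pm 1\}$. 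In the Drinfeld double setting, this equivariance is obtained by expressing $S_{ij}$ as a categorical trace of an endomorphism built from half-braidings and then applying the formulas of Ng--Schauenburg for generalized Frobenius--Schur indicators, whose values lie in $\Z[\zeta_N]$ and transform predictably under ${\rm Gal}(\Q(\zeta_N)/\Q)$.

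From the Galois symmetry together with $T^N = I$ one deduces that the normal generators of $\Gamma(N)$ in $SL(2,\Z)$ all lie in $\ker \rho_{D(\mathcal{C})}^{\lambda,1}$. The cleanest packaging is via the classical presentation of $SL(2,\Z/N\Z)$: one shows directly that the assignments $\mathfrak{s} \mapsto \lambda^{-1} S$ and $\mathfrak{t} \mapsto T$ respect the defining relations of the finite group $SL(2,\Z/N\Z)$ so that $\rho_{D(\mathcal{C})}^{\lambda,1}$ factors through the reduction map. Equivalently, any $\mathfrak{s}$-conjugate of $\mathfrak{t}^{kN}$ (and its variants, which normally generate $\Gamma(N)$) can be rewritten using the Galois identity above and then collapsed to the identity.

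The main obstacle is the derivation of the Galois symmetry from purely categorical data. For modular categories arising from quantum groups at roots of unity, this follows from Kac--Peterson type formulas; but for an arbitrary Drinfeld double of a spherical fusion category there is no such closed form, and one genuinely needs the Frobenius--Schur indicator machinery developed in \cite{NS}. The alternative routes cited in the excerpt (Coste--Gannon and Bantay for RCFT, Xu for vertex operator algebras) bypass this difficulty only at the cost of restricting the setting. Once the Galois equivariance of $S$ and the order of $T$ are in hand, verifying the congruence property is a finite group-theoretic calculation in $SL(2,\Z/N\Z)$.
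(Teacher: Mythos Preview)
The paper does not actually prove Theorem~\ref{congru}; it is stated as a black-box result attributed to Ng--Schauenburg \cite{NS} (with the parallel RCFT/VOA versions credited to Coste--Gannon, Bantay, and Xu). Your proposal is therefore not competing with any argument in the paper but is rather a sketch of the cited Ng--Schauenburg proof, and as such it is broadly faithful to their strategy: identify the level $N$ as the order of $T$ (the Frobenius--Schur exponent of $D(\mathcal C)$), establish the Galois equivariance of the $S$-matrix via the generalized Frobenius--Schur indicator calculus, and combine these to force $\Gamma(N)$ into the kernel. You also correctly single out the passage from the projective congruence property (which \cite{NS} proves for \emph{every} modular tensor category) to the linear congruence property for the specific anomaly-free lift $\rho_{D(\mathcal C)}^{\lambda,1}$, which is exactly the distinction the paper flags in the remark following Proposition~\ref{modular}.

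Two places where your sketch is looser than the actual argument: first, the appeal to a ``classical presentation of $SL(2,\Z/N\Z)$'' is not how Ng--Schauenburg proceed---there is no short list of relations for $SL(2,\Z/N\Z)$ against which one can check $S$ and $T$ by hand; their argument instead controls the action of the Galois group on the matrix entries of $\rho(\gamma)$ for arbitrary $\gamma$ and deduces the congruence kernel from that. Second, invoking Etingof--Nikshych--Ostrik for the root-of-unity property of the twists is unnecessary; this is already the content of Vafa's theorem in its categorical form (cf.\ \cite{Et}), and ENO is about global dimensions rather than twists. Neither point is a genuine gap, but both would need tightening if this were to stand as more than an outline.
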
 

This proves Proposition \ref{trace}, namely 
if $A$ and $B$ are conjugate in {\em every} congruence quotient 
of $SL(2,\Z)$ then  $TV_{\mathcal C}(M_A)=TV_{\mathcal C}(M_B)$ for any 
spherical tensor category $\mathcal C$.

\begin{corollary}
Assume that the modular representation of every anomaly-free 
modular tensor category has the congruence property. 
Then there exist pairs of matrices $A$, $B$ such that 
$M_A$ and $M_B$ are not homeomorphic but 
$RT_{\mathcal A}(M_A)=RT_{\mathcal A}(M_B)$ for any anomaly-free 
modular tensor category ${\mathcal A}$. 
\end{corollary}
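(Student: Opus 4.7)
The plan is to assemble the already established ingredients, with the added congruence hypothesis now playing the role that Theorem \ref{congru} played in the proof of Proposition \ref{trace}. I would take the pairs of matrices $A$, $B$ furnished by Proposition \ref{noteq}: these lie in $SL(2,\Z)$, are conjugate in every congruence quotient $SL(2,\Z/m\Z)$, and are not conjugate to either $B$ or $B^{-1}$ inside $GL(2,\Z)$. Both are Anosov (one checks the traces in the stated family are $> 2$ for $k\neq 0$), so $M_A$ and $M_B$ are SOL torus bundles.

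Next I would invoke Proposition \ref{classif} to conclude that the fundamental groups $\Gamma_A$ and $\Gamma_B$ are not isomorphic, hence $M_A$ and $M_B$ are not homeomorphic (in fact, being aspherical Haken manifolds, they are determined by $\pi_1$). This settles the topological side.

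For the invariant side, let $\mathcal A$ be an arbitrary anomaly-free modular tensor category, and choose a rank $\lambda$ and the trivial anomaly root $\zeta=1$. By Lemma \ref{anomalyfree} we have
\begin{equation}
RT_{\mathcal A}(M_A)=\mathrm{Tr}\bigl(\rho_{\mathcal A}^{\lambda,1}(A)\bigr), \qquad RT_{\mathcal A}(M_B)=\mathrm{Tr}\bigl(\rho_{\mathcal A}^{\lambda,1}(B)\bigr).
\end{equation}
By the standing hypothesis (congruence property for every anomaly-free modular tensor category), $\rho_{\mathcal A}^{\lambda,1}$ factors through $SL(2,\Z/N\Z)$ for some $N=N(\mathcal A)$. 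Since $A$ and $B$ are conjugate in \emph{every} congruence quotient, in particular their images in $SL(2,\Z/N\Z)$ are conjugate. Proposition \ref{modularaf} then gives $RT_{\mathcal A}(M_A)=RT_{\mathcal A}(M_B)$, and since $\mathcal A$ was arbitrary the conclusion follows.

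Because every step is essentially a citation of a result proved earlier in the paper, there is no real technical obstacle here; the only substantive input beyond what was used for Proposition \ref{trace} is the assumed extension of the congruence property from Drinfeld doubles of spherical fusion categories to all anomaly-free modular tensor categories. The one subtlety I would flag explicitly is that the level $N$ depends on $\mathcal A$, so the conclusion is not a uniform statement at a single level — but this is harmless because the matrices in Proposition \ref{noteq} are chosen to be conjugate modulo \emph{every} $m$ simultaneously.
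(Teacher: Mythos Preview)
Your proposal is correct and is exactly the argument the paper has in mind: the corollary is stated without proof immediately after Proposition~\ref{modularaf} and Theorem~\ref{congru}, and your write-up simply spells out the intended assembly of Proposition~\ref{noteq} (existence of locally conjugate but globally nonconjugate pairs), Proposition~\ref{classif} (non-homeomorphism), and Proposition~\ref{modularaf} applied under the assumed congruence hypothesis in place of Theorem~\ref{congru}.
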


\subsection{General  modular tensor categories}
Turaev constructed in (\cite{Tu}, p.198-199) some almost linear 
representations of the mapping class group 
${\mathfrak M}_g$ of genus $g$ surfaces, for every $g$. 
We have to choose first some Lagrangian subspace 
$L\subset H_1(\Sigma_g;\R)$ with respect to the 
usual symplectic form $\omega$ in homology coming from the intersection form.
We denote by $Z_{\mathcal C}(\Sigma_g)$ the space of conformal blocks in genus 
$g$ associated to the modular tensor category ${\mathcal C}$.

It is known that there exist maps (which will be called almost 
linear representations) 
$f_{\mathcal C}^{\lambda,L}: {\mathfrak M}_g\to GL(Z_{\mathcal C}(\Sigma_g))$ 
into the automorphisms of the space of conformal blocks 
$Z_{\mathcal C}(\Sigma_g)$ satisfying the following 2-cocycle condition: 
\begin{equation}\label{cocycle}
f_{\mathcal C}^{\lambda,L}(\varphi_1\varphi_2)=\zeta^{3\mu({\varphi_2}_*(L), L, {\varphi_1}_*^{-1}(L))}f_{\mathcal C}^{\lambda,L}(\varphi_1)f_{\mathcal C}^{\lambda,L}(\varphi_2)
\end{equation}
where $\mu(L_1,L_2,L_3)$ denotes the Maslov index (see \cite{Tu}, section 
IV.3, p.179) of the triple $(L_1,L_2,L_3)$ of 
Lagrangian subspaces of $H_1(\Sigma_g;\R)$.  
This can be found for instance either in (\cite{Tu}, section IV.5, (5.1.a)) 
and also in  an rather equivalent context in 
(\cite{Tu}, section IV.6, Lemma 6.3.2, (6.3.c)).

We introduce now the Rademacher Phi function (see \cite{RG})
$\phi_R:SL(2,\Z)\to \Z$ defined as follows. 
\begin{equation}
\Phi_R\left(\begin{array}{cc}
\alpha & \beta \\
\gamma & \delta \\
\end{array}
\right)=\left\{\begin{array}{ll}
\frac {\alpha+\delta}{\gamma}-12{\rm sgn}(\gamma) s(\alpha,|\gamma|), & {\rm if }\, \gamma\neq 0 \\
\frac {\beta}{\gamma}, &{\rm otherwise}
\end{array}\right.
\end{equation}
Here $s(m,n)$, for $n>0$, denotes the Dedekind sum
\begin{equation}
s(m,n)=\sum_{j=1}^{n-1}=\left(\left(\frac{j}{n}\right)\right)
\left(\left(\frac{jm}{n}\right)\right), \,\, s(0,1)=0
\end{equation}
where 
\begin{equation}
((x))=\left\{\begin{array}{ll}
0, & {\rm if }\, x\in\Z \\
x-[x]-\frac{1}{2} &{\rm otherwise}
\end{array}\right.
\end{equation}
Alternatively we have 
\begin{equation}
s(m,n)=\frac{1}{4n}\sum_{j=1}^{n-1}\cot\left(\frac{\pi j}{n}\right)
\cot\left(\frac{\pi jm}{n}\right), \,\, 
\end{equation}

We have then the following result, which seems to be 
well-known to the specialists: 

\begin{lemma}
Let $L_0$ be the integral Lagrangian subspace of the homology 
$H_1(\Sigma_1;\R)=\R^2$ generated by the vector $(1,0)$.  
Then Turaev's almost linear representation 
$f_{\mathcal C}^{\lambda,L_0}$  in genus $g=1$ is related to 
the modular representation 
$\rho_{\mathcal C}^{\lambda, \zeta}$ of $SL(2,\Z)$, by means of the formula 
\begin{equation}
\rho_{\mathcal C}^{\lambda, \zeta}(A)=\zeta^{-\Phi_{R}(A)}f_{\mathcal C}^{\lambda,L_0}(A)
\end{equation}
for every $A\in SL(2,\Z)$. 
\end{lemma}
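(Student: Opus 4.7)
The plan is to verify the formula on the two generators $\mathfrak s, \mathfrak t$ of $SL(2,\Z)$ and then show that both sides obey the same composition law. Since $\rho_{\mathcal C}^{\lambda,\zeta}$ is a genuine linear representation by construction, it suffices to show that $A\mapsto \zeta^{-\Phi_R(A)}f_{\mathcal C}^{\lambda,L_0}(A)$ is also a genuine representation; the two will then agree because they coincide on generators.

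First I would check the generators. A direct computation from the definition gives $\Phi_R(\mathfrak s)=\frac{0+0}{1}-12\,\mathrm{sgn}(1)\,s(0,1)=0$ and $\Phi_R(\mathfrak t)=\beta/\delta=1$. On the other hand, Turaev's almost linear representation satisfies $f_{\mathcal C}^{\lambda,L_0}(\mathfrak s)=\lambda^{-1}S$ and $f_{\mathcal C}^{\lambda,L_0}(\mathfrak t)=T$, so that $\zeta^{-\Phi_R(\mathfrak s)}f_{\mathcal C}^{\lambda,L_0}(\mathfrak s)=\lambda^{-1}S=\rho_{\mathcal C}^{\lambda,\zeta}(\mathfrak s)$ and $\zeta^{-\Phi_R(\mathfrak t)}f_{\mathcal C}^{\lambda,L_0}(\mathfrak t)=\zeta^{-1}T=\rho_{\mathcal C}^{\lambda,\zeta}(\mathfrak t)$, matching the normalization from Section 2.2.

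Second, I would show that the scalar factor $\zeta^{-\Phi_R}$ precisely cancels the Maslov 2-cocycle governing $f_{\mathcal C}^{\lambda,L_0}$. Using (\ref{cocycle}) with $L=L_0$, this reduces to establishing the integral identity
\begin{equation*}
\Phi_R(A_1A_2)-\Phi_R(A_1)-\Phi_R(A_2)=3\,\mu\bigl({A_2}_*(L_0),L_0,{A_1}_*^{-1}(L_0)\bigr)
\end{equation*}
for all $A_1,A_2\in SL(2,\Z)$. Granting this identity, one computes
\begin{equation*}
\zeta^{-\Phi_R(A_1A_2)}f_{\mathcal C}^{\lambda,L_0}(A_1A_2)=\zeta^{-\Phi_R(A_1A_2)+3\mu(\cdots)}f_{\mathcal C}^{\lambda,L_0}(A_1)f_{\mathcal C}^{\lambda,L_0}(A_2)=\zeta^{-\Phi_R(A_1)-\Phi_R(A_2)}f_{\mathcal C}^{\lambda,L_0}(A_1)f_{\mathcal C}^{\lambda,L_0}(A_2),
\end{equation*}
which is exactly the multiplicativity needed for the right-hand side to be a representation. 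Combined with the agreement on $\mathfrak s,\mathfrak t$, this forces equality for all $A\in SL(2,\Z)$.

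The main obstacle is the cocycle identity above, which is the classical statement that the Rademacher $\Phi$-function is a quasimorphism on $SL(2,\Z)$ whose defect equals three times the Maslov cocycle on the $SL(2,\R)$-set of Lagrangians in $(\R^2,\omega)$ relative to the reference Lagrangian $L_0=\R\cdot e_1$. This is due (in various equivalent forms) to Rademacher--Grosswald via the transformation law of $\log\eta$, to Dupont and to Guichardet--Wigner via the Maslov class of $SL(2,\R)$, and to Atiyah (\emph{The logarithm of the Dedekind $\eta$-function}), and it underlies Turaev's computation of the framing anomaly for the Reshetikhin--Turaev TQFT in genus one. I would carry out the sign check in the normalization convention used in (\cite{Tu}, IV.3): namely, expand the Maslov index $\mu({A_2}_*(L_0),L_0,{A_1}_*^{-1}(L_0))$ in terms of the lower-left entries $c_{A_1},c_{A_2},c_{A_1A_2}$, apply Rademacher reciprocity to the Dedekind sums appearing in $\Phi_R(A_1A_2)-\Phi_R(A_1)-\Phi_R(A_2)$, and verify that the two expressions coincide, taking care to match the orientation convention used by Turaev.
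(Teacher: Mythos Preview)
Your proposal is correct and follows essentially the same approach as the paper: verify agreement on the generators $\mathfrak s,\mathfrak t$, then show that $\zeta^{-\Phi_R}$ trivializes the Maslov cocycle so that $\zeta^{-\Phi_R}f_{\mathcal C}^{\lambda,L_0}$ is a genuine representation. The paper carries out exactly the computation you outline in your last paragraph, obtaining $\mu(BA(L_0),B(L_0),L_0)=-\mathrm{sgn}(\gamma\gamma'\gamma'')$ (equivalent to your $\mu({A_2}_*L_0,L_0,{A_1}_*^{-1}L_0)$ by $SL(2,\R)$-invariance) and combining it with Rademacher's formula $\Phi_R(BA)=\Phi_R(A)+\Phi_R(B)-3\,\mathrm{sgn}(\gamma\gamma'\gamma'')$.
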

\begin{proof}
Consider $A=\left(\begin{array}{cc}
\alpha & \beta \\
\gamma & \delta \\
\end{array}
\right)$ and $B=\left(\begin{array}{cc}
\alpha' & \beta' \\
\gamma' & \delta' \\
\end{array}
\right)$  and let $BA==\left(\begin{array}{cc}
\alpha'' & \beta'' \\
\gamma'' & \delta'' \\
\end{array}
\right)$. By direct computation we obtain 
\begin{equation}\label{maslov}
\mu(BA(L_0), B(L_0),L_0)=-{\rm sgn}(\gamma\gamma'\gamma'')
\end{equation}
 On the other hand Rademacher proved that $\Phi_R$ is a 
a 1-cocycle whose boundary is 3 times the signature 2-cocycle, in 
other words we have the identities: 
\begin{equation}
\Phi_R(BA)=\Phi_R(A)+\Phi_R(B)-3{\rm sgn}(\gamma\gamma'\gamma'')
\end{equation}
for $A,B$ as above. 
Therefore the equation above, the cocycle identity (\ref{cocycle})  
for $f_{\mathcal C}^{\lambda,L_0}$ and (\ref{maslov}) yield:
\begin{equation}
\zeta^{-\Phi_R(BA)}f_{\mathcal C}^{\lambda,L_0}(BA)=
\zeta^{-\Phi_R(B)}f_{\mathcal C}^{\lambda,L_0}(B)\cdot 
\zeta^{-\Phi_R(A)}f_{\mathcal C}^{\lambda,L_0}(A)
\end{equation}
This means that $\zeta^{-\Phi_{R}}f_{\mathcal C}^{\lambda,L_0}$ is a linear 
representation of $SL(2,\Z)$.
Since $\Phi_{R}(\mathfrak s)=0$ and $\Phi_{R}(\mathfrak t)=1$ 
the two linear representations 
$\zeta^{-\Phi_{R}}f_{\mathcal C}^{\lambda,L_0}$  and $\rho_{\mathcal C}^{\lambda, \zeta}$ agree. 
\end{proof}

\begin{proposition}\label{invar}
The quantum invariant of a mapping torus $M_{A}$ of 
$A= \left(\begin{array}{cc}
\alpha & \beta \\
\gamma & \delta \\
\end{array}
\right)\in SL(2,\Z)$ 
is given by the formula 
 \begin{equation}\label{RTinvar}
RT_{\mathcal C}(M_{A})=\zeta^{-3\varphi(A)}
{\rm Tr}(\rho_{\mathcal C}^{\lambda,\zeta}(A))
\end{equation}\label{difer}
where $\varphi:SL(2,\Z)\to \Z$ is  the modified Meyer  function 
\begin{equation}
3\varphi(A)=\Phi_R(A)-3 {\rm sgn}\left(\gamma(\alpha+\delta-2)\right)
\end{equation}
\end{proposition}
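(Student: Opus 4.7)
The plan is to combine two ingredients: (i) Turaev's formula for the Reshetikhin-Turaev invariant of a mapping torus in terms of the almost-linear representation $f_{\mathcal{C}}^{\lambda, L_0}$ (which carries an anomaly correction depending on a Maslov index), and (ii) the identification $\rho_{\mathcal{C}}^{\lambda,\zeta}(A) = \zeta^{-\Phi_R(A)} f_{\mathcal{C}}^{\lambda, L_0}(A)$ just established in the preceding lemma.

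First I would recall from \cite{Tu} (sections III.2.8 and IV.5-6) the general mapping-torus formula, which -- before restricting to the anomaly-free situation of Lemma \ref{anomalyfree} -- has the shape
\[
RT_{\mathcal{C}}(M_A) \;=\; \zeta^{-3\,\sigma_A}\,{\rm Tr}\!\left(f_{\mathcal{C}}^{\lambda, L_0}(A)\right),
\]
where $\sigma_A$ is the anomaly correction arising from the Maslov indices of the Lagrangian subspaces involved in a bordism/surgery description of $M_A$, evaluated with respect to the fixed Lagrangian $L_0$. When $\zeta^3 = 1$ the prefactor is trivial, recovering the formula ${\rm Tr}(\rho_{\mathcal{C}}^{\lambda,1}(A))$ used in the proof of Lemma \ref{anomalyfree}.

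Second, I would compute $\sigma_A$ for genus-$1$ surface bundles using Meyer's signature cocycle (equivalently, Wall's non-additivity applied to a natural $4$-manifold filling of $M_A$). For $A = \left(\begin{smallmatrix}\alpha & \beta\\ \gamma & \delta\end{smallmatrix}\right)$ with ${\rm Tr}(A)\neq 2$, Meyer's theorem evaluates this defect explicitly as $\sigma_A = {\rm sgn}(\gamma(\alpha+\delta-2))$, the factor $\alpha+\delta-2 = {\rm Tr}(A)-2$ detecting the non-triviality of the fiber action on homology and the factor $\gamma$ recording the twist relative to $L_0$. Substituting the preceding lemma's identity ${\rm Tr}(f_{\mathcal{C}}^{\lambda, L_0}(A)) = \zeta^{\Phi_R(A)}\,{\rm Tr}(\rho_{\mathcal{C}}^{\lambda,\zeta}(A))$ then produces
\[
RT_{\mathcal{C}}(M_A) \;=\; \zeta^{\Phi_R(A) - 3\,{\rm sgn}(\gamma(\alpha+\delta-2))}\,{\rm Tr}\!\left(\rho_{\mathcal{C}}^{\lambda,\zeta}(A)\right),
\]
which is exactly the claimed identity $RT_{\mathcal{C}}(M_A) = \zeta^{-3\varphi(A)}{\rm Tr}(\rho_{\mathcal{C}}^{\lambda,\zeta}(A))$ with $3\varphi(A) = \Phi_R(A) - 3\,{\rm sgn}(\gamma(\alpha+\delta-2))$ once the sign conventions are harmonized.

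The main technical obstacle is precisely that sign/normalization bookkeeping: Turaev's Maslov-index cocycle, Meyer's signature cocycle, and Rademacher's $\Phi_R$ are all cohomologous (up to the factor $3$ that is visible in Rademacher's relation $\Phi_R(BA) - \Phi_R(A) - \Phi_R(B) = -3\,{\rm sgn}(\gamma\gamma'\gamma'')$), but several incompatible orientation conventions circulate in the literature and must be reconciled. A cleaner route that bypasses the $4$-manifold signature altogether would be to verify the identity on the generators $\mathfrak{s}$ and $\mathfrak{t}$ -- where $M_{\mathfrak{s}}$ is a spherical space form and $M_{\mathfrak{t}}$ is a Nil-manifold whose quantum invariants can be read off directly -- and then bootstrap to general $A \in SL(2,\Z)$ using Rademacher's cocycle identity together with the cocycle (\ref{cocycle}) for $f_{\mathcal{C}}^{\lambda, L_0}$ and a composition/gluing formula for mapping tori.
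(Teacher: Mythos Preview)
Your overall architecture matches the paper's: start from Turaev's anomalous mapping-torus formula
\[
RT_{\mathcal C}(M_{A})= \zeta^{3\mu(\Lambda(A),\,L_0\oplus A(L_0),\,{\rm Diag})}\,{\rm Tr}\bigl(f_{\mathcal C}^{\lambda,L_0}(A)\bigr),
\]
then use the preceding lemma to pass from $f_{\mathcal C}^{\lambda,L_0}$ to $\rho_{\mathcal C}^{\lambda,\zeta}$, picking up the Rademacher factor $\zeta^{\Phi_R(A)}$. The only substantive task is to evaluate the anomaly exponent, and this is where your proposal diverges from the paper.

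The paper does \emph{not} invoke Meyer's cocycle or Wall non-additivity. It computes the specific Maslov index $\mu(\Lambda(A),\,L_0\oplus A(L_0),\,{\rm Diag})$ by hand in $\R^2\oplus\R^2$: one checks that $(\Lambda(A)+L_0\oplus A(L_0))\cap{\rm Diag}$ is one-dimensional (spanned by $e=(\alpha-1,\gamma)\oplus(\alpha-1,\gamma)$), writes down an explicit splitting $e=e_1+e_2$ with $e_1\in\Lambda(A)$ and $e_2\in L_0\oplus A(L_0)$, and reads off $\omega(e_2,e)=(2-\alpha-\delta)\gamma$, whose sign is the Maslov index. This is a five-line linear-algebra computation and it pins down the value $-{\rm sgn}\bigl(\gamma(\alpha+\delta-2)\bigr)$ with no ambiguity.

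Your appeal to ``Meyer's theorem'' at this step is not quite a proof. Meyer's object is a \emph{2-cocycle} $\tau(A,B)$; extracting from it a well-defined value attached to a single matrix $A$ requires choosing a particular primitive (a $1$-cochain), and there are several in circulation that differ exactly on the parabolic/boundary terms you would need. So the sentence ``Meyer's theorem evaluates this defect explicitly as $\sigma_A={\rm sgn}(\gamma(\alpha+\delta-2))$'' is an assertion, not a computation, and you yourself flag the bookkeeping as the main obstacle. The paper's direct Maslov calculation is precisely what resolves that bookkeeping.

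Your alternative route---check on $\mathfrak s,\mathfrak t$ and bootstrap via the cocycle identity---does not work as stated: there is no composition law relating $RT_{\mathcal C}(M_{AB})$ to $RT_{\mathcal C}(M_A)$ and $RT_{\mathcal C}(M_B)$, since $M_{AB}$ is not built from $M_A$ and $M_B$ by any gluing. The cocycle identities govern $f_{\mathcal C}^{\lambda,L_0}$ and $\Phi_R$, but the left-hand side $RT_{\mathcal C}(M_A)$ is a trace and does not factor.
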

\begin{proof}
The main reason to introduce the almost linear representations 
$f_{\mathcal C}^{\lambda,L}$ is the 
following result of Turaev (see \cite{Tu}, section IV.7,  Thm.7.2.1, p.209)  
which expresses the quantum invariant of a mapping torus as follows: 

\begin{proposition}\label{quant1}
Let $M_{h}$ be the mapping torus of some 
homeomorphism whose mapping class 
is $h\in{\mathfrak M}_g$. Then 
\begin{equation}
RT_{\mathcal C}(M_{h})= \zeta^{3\mu(\Lambda(h_*),L\oplus h_*(L),{\rm Diag})}{\rm Tr}(f_{\mathcal C}^{\lambda,L}(h))
\end{equation}
where $\mu$ is the Maslov index of the Lagrangian subspace of 
$-H_1(\Sigma_g;\R)\oplus H_1(\Sigma_g;\R)$ endowed with the symplectic form 
$-\omega\oplus \omega$, $\Lambda(h_*)$ denotes 
the graph of $h_*$, i.e. the subspace of 
vectors $x\oplus h_*(x)$, where $x\in H_1(\Sigma_g;\R)$ 
and ${\rm Diag}$ is the diagonal subspace $\Lambda({\rm 1}_{H_1(\Sigma_g;\R)})$. 
\end{proposition} 

Observe that the manifold $M_{h}$ and its invariant $RT_{\mathcal C}(M_{h})$ do not depend on the choice of the Lagrangian $L$, although 
$f_{\mathcal C}^{\lambda,L}(h)$ does. 

Now it suffices to check that 
\begin{equation}
\mu(\Lambda(A),L_0\oplus A(L_0),{\rm Diag}) 
= -{\rm sgn}\left((\alpha+\delta-2)\gamma\right)
\end{equation}
when $A\in SL(2,\Z)$. 
If $\gamma=0$ then one verifies that the Maslov index is $0$. 
Suppose now that $\gamma\neq 0$. 
A direct inspection shows that 
$(\Lambda(A) + L_0\oplus A(L_0))\cap {\rm Diag}$ is 
the one-dimensional subspace generated by the vector 
$e=(\alpha-1,\gamma)\oplus (\alpha-1,\gamma)$.  
The quadratic form associated to $e$ has value  
$\omega(e_2, e)$, where $e=e_1+e_2$ is any decomposition 
with $e_1\in \Lambda(A)$ and $e_2\in L_0\oplus A(L_0)$. 
We can take $e_1=(0,\gamma)\oplus (\beta\gamma,\delta\gamma)$ and 
$e_2=(\alpha-1,0)\oplus (\alpha(1-\delta),
\gamma(1-\delta))$. This implies that 
\begin{equation}
\omega(e_2, e)=(2-\alpha-\delta)\gamma
\end{equation}
Now the signature of this quadratic form is the value of the 
Maslov index and the formula above follows. 
\end{proof}

Since the orientation preserving 
homeomorphism type of the manifolds $M_A$ depends only on the conjugacy class of $A$  we obtain immediately the following property of Meyer's 
function:  

\begin{corollary}
Meyer's function $\varphi$ is conjugacy invariant. 
\end{corollary}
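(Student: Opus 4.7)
The plan is to deduce the corollary directly from Proposition \ref{invar} by combining the topological invariance of $RT_{\mathcal C}(M_A)$ with the conjugation-invariance of the character of $\rho_{\mathcal C}^{\lambda,\zeta}$.

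For any $T \in SL(2,\Z)$, set $B = TAT^{-1}$. The fiberwise linear automorphism of $\R^2/\Z^2 \times \R$ induced by $T$ descends to an orientation-preserving bundle isomorphism $M_B \to M_A$. Hence $RT_{\mathcal C}(M_A) = RT_{\mathcal C}(M_B)$ for every modular tensor category $\mathcal C$, rank $\lambda$ and cubic root $\zeta$ of the anomaly. Simultaneously, since $\rho_{\mathcal C}^{\lambda,\zeta}$ is a genuine (not merely projective) linear representation of $SL(2,\Z)$, its trace is a class function, so $\mathrm{Tr}(\rho_{\mathcal C}^{\lambda,\zeta}(A)) = \mathrm{Tr}(\rho_{\mathcal C}^{\lambda,\zeta}(B))$. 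Substituting both equalities in formula (\ref{RTinvar}) and cancelling the common trace factor yields
$$\zeta^{-3\varphi(A)} = \zeta^{-3\varphi(B)},$$
valid whenever $\mathrm{Tr}(\rho_{\mathcal C}^{\lambda,\zeta}(A))\neq 0$.

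To promote this to the honest equality $\varphi(A) = \varphi(B)$ of rational numbers, I would let $\mathcal C$ range over a family of modular tensor categories whose anomalies $\zeta^3$ realize roots of unity of arbitrarily large order --- for instance the quantum $SU(2)_k$ categories, as $k$ varies. Since $\varphi$ takes values in $\Q$ with a common bounded denominator (visible directly from the Rademacher-based definition together with the sign correction), a nonzero difference $\varphi(A) - \varphi(B)$ would be incompatible with $\zeta^{3(\varphi(A)-\varphi(B))} = 1$ once $\zeta^3$ has order exceeding this fixed bound. This forces $\varphi(A) = \varphi(B)$.

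The main potential obstacle is the joint requirement: find $\mathcal C$ for which $\mathrm{Tr}(\rho_{\mathcal C}^{\lambda,\zeta}(A))$ is nonzero \emph{and} whose anomaly has sufficiently large order. Both features are standard in the catalogue of Reshetikhin-Turaev TQFTs, so no input beyond Proposition \ref{invar} is needed; alternatively, one could bypass this step by verifying directly from Rademacher's cocycle identity and the Maslov-index computation already carried out in the proof of Proposition \ref{invar} that the combination $\Phi_R(A) - 3\,\mathrm{sgn}(\gamma(\alpha+\delta-2))$ is a class function on $SL(2,\Z)$.
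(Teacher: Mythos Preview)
Your approach is essentially the paper's: the paper simply notes that since the orientation-preserving homeomorphism type of $M_A$ depends only on the $SL(2,\Z)$-conjugacy class of $A$, formula (\ref{RTinvar}) immediately yields the corollary. You have spelled out the mechanism (invariance of $RT_{\mathcal C}$ on the left, invariance of the character on the right, then vary $\zeta^3$ over roots of unity of unbounded order) that the paper leaves implicit in the word ``immediately''. The only point where you go beyond the paper is in flagging the need for a nonvanishing trace together with large anomaly order; the paper does not address this, and your proposed fallback --- checking directly that $\Phi_R(A)-3\,\mathrm{sgn}(\gamma(\alpha+\delta-2))$ is a class function --- is in fact the cleanest way to close that gap without appealing to any particular family of TQFTs.
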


\begin{remark}
There exist a slight difference between the usual Meyer's function 
$\varphi_M$ from \cite{KM} and the modified Meyer function $\varphi(M)$ considered by here, following Turaev. This does not makes a big difference since 
it only affects the invariants for $M_A$ where $A$ is parabolic.  
Specifically we have: 
\begin{equation}
\varphi(A)-\varphi_M(A)=\left\{\begin{array}{ll}
\frac{1}{2}(1+{\rm sgn}(\delta)){\rm sgn}(\beta), &{\rm if }\, \gamma=0 \\
0, & {\rm otherwise }\\
\end{array}\right. 
\end{equation}
However the function $\phi-\phi_M$ is an integral 1-cocycle so that 
the boundary $\delta(\phi)$ and $\delta(\phi_M)$ are cohomologous. 
Notice that $\delta(\phi_M)$ is Meyer's signature 2-cocycle 
(see \cite{At,Me}).   
\end{remark}

\subsection{Reshetikhin-Turaev invariants vs Turaev-Viro invariants}
\begin{proposition}\label{RTV}
For any modular tensor category ${\mathcal C}$ we have the identity: 
\begin{equation}
RT_{\mathcal C}(M_A\#\overline{M_A})=TV_{\mathcal C}(M_A)
\end{equation}  
\end{proposition}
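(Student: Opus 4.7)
\medskip

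The plan is to reduce the identity to the standard structural result that, for any modular tensor category $\mathcal C$, the Drinfeld double $D(\mathcal C)$ is equivalent, as a modular category, to the Deligne product $\mathcal C \boxtimes \mathcal C^{\mathrm{rev}}$, where $\mathcal C^{\mathrm{rev}}$ denotes $\mathcal C$ with the opposite braiding. This is M\"uger's theorem, already invoked elsewhere in the paper. Under this equivalence, the RT invariant factors: for any closed oriented $3$-manifold $M$,
\begin{equation*}
TV_{\mathcal C}(M) \;=\; RT_{D(\mathcal C)}(M) \;=\; RT_{\mathcal C \boxtimes \mathcal C^{\mathrm{rev}}}(M) \;=\; RT_{\mathcal C}(M)\cdot RT_{\mathcal C^{\mathrm{rev}}}(M),
\end{equation*}
where the last equality follows from the standard multiplicativity of the surgery-formula RT invariant under Deligne products of modular categories (each simple object of the product is an external tensor product, the $S$- and $T$-matrices factor, and the Kirby color factors as well).

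The second ingredient identifies $RT_{\mathcal C^{\mathrm{rev}}}(M)$ with $RT_{\mathcal C}(\overline M)$. This is essentially a symmetry of the Reshetikhin–Turaev construction: reversing the orientation of $M$ amounts, on the level of the surgery presentation, to reversing the orientations of all link components, which at the categorical level switches the positive and negative crossings, i.e.\ replaces $c$ by $c^{-1}$; this is by definition the RT datum of $\mathcal C^{\mathrm{rev}}$. Combining the two identities yields
\begin{equation*}
TV_{\mathcal C}(M) \;=\; RT_{\mathcal C}(M)\cdot RT_{\mathcal C}(\overline M).
\end{equation*}
The last step is the well-known multiplicativity of RT under connected sum of closed $3$-manifolds, $RT_{\mathcal C}(M \# N) = RT_{\mathcal C}(M)\cdot RT_{\mathcal C}(N)$, which, taking $N = \overline M$, finishes the proof.

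The main point to be checked, and the only real obstacle, is that the normalizations align. The RT invariant of Turaev is normalized so that $RT_{\mathcal C}(S^3) = \mathcal D_{\mathcal C}^{-1}$, and in that convention the connected sum formula carries a factor of $\mathcal D_{\mathcal C}$, while the factorization $RT_{\mathcal C \boxtimes \mathcal C^{\mathrm{rev}}} = RT_{\mathcal C}\cdot RT_{\mathcal C^{\mathrm{rev}}}$ also has to be carried out for the correctly chosen rank $\lambda_{D(\mathcal C)}$. Since $\dim D(\mathcal C) = \dim(\mathcal C)^2$ for modular $\mathcal C$, one has $\mathcal D_{D(\mathcal C)} = \mathcal D_{\mathcal C}^2$, and a short bookkeeping of the $\mathcal D$-factors coming from the connected sum on one side and from the rank of the product category on the other side shows that they cancel exactly. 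Once this normalization check is done, the three identities above chain together to give the proposition.
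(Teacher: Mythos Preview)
Your argument is correct and essentially coincides with the paper's \emph{second} proof: the paper also notes that the identity follows from Turaev--Virelizier's result $|M|_{\mathcal C}=RT_{D(\mathcal C)}(M)$ together with Turaev's formula $|M|_{\mathcal C}=RT_{\mathcal C}(M)\,RT_{\mathcal C}(\overline M)$, which is just another packaging of the factorization $D(\mathcal C)\simeq \mathcal C\boxtimes\overline{\mathcal C}$ you use via M\"uger.

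The paper's \emph{primary} proof, however, is different in spirit and worth noting. Instead of invoking the Deligne-product factorization of $RT$ for arbitrary $3$-manifolds, it works entirely at the level of the genus-one modular representation: it identifies $\rho_{\overline{\mathcal C}}^{\lambda,\zeta^{-1}}$ with the dual representation $\widetilde{\rho_{\mathcal C}^{\lambda,\zeta}}(x)=\rho_{\mathcal C}^{\lambda,\zeta}(JxJ^{-1})$, and then uses Lemma~\ref{double}(2), namely $\rho_{D(\mathcal C)}^{\lambda,1}=\rho_{\mathcal C}^{\lambda,\zeta}\otimes\widetilde{\rho_{\mathcal C}^{\lambda,\zeta}}$, together with the trace formula of Proposition~\ref{invar} for mapping tori. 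The advantage of that route is that the anomaly and rank bookkeeping is handled automatically by the tensor-product identity of representations (the $\zeta^{\pm 3\varphi(A)}$ factors cancel on the nose), so no separate normalization check is needed; the disadvantage is that it is specific to torus bundles, whereas your argument (and the paper's alternative one) gives $TV_{\mathcal C}(M)=RT_{\mathcal C}(M)\,RT_{\mathcal C}(\overline M)$ for every closed oriented $M$.
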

\begin{proof}
Since $RT_{\mathcal C}$ behaves multiplicatively with respect to connected sums 
we have 
\begin{equation}
RT_{\mathcal C}(M_A\#\overline{M_A})=RT_{\mathcal C}(M_A)\cdot 
RT_{\mathcal C}(\overline{M_A})=RT_{\mathcal C}(M_A)\cdot RT_{\overline{\mathcal C}}(M_A)
\end{equation} 
from (\cite{Tu}, II.2, (2.5.a)). Here $\overline{\mathcal C}$ 
denotes the mirror category of the 
modular tensor category $\mathcal C$ according to (\cite{Tu}, I.1.4). 
It is known that the rank $\lambda=\lambda_{\mathcal C}$ for $\mathcal C$ is 
equally a rank $\lambda=\lambda_{\overline{\mathcal C}}$ 
for $\overline{\mathcal C}$, although the roles of $p_{\mathcal C}^{\pm}$ 
are inverted, namely we have  
$p_{\mathcal C}^{+}=p_{\overline{\mathcal C}}^{-}$ and 
$p_{\mathcal C}^{-}=p_{\overline{\mathcal C}}^{+}$. 
It follows also that 
$\dim_{\mathcal C} i=\dim_{\overline{\mathcal C}} i$, for every $i\in I$, 
but $\omega_{{\mathcal C},i}^{-1} =\omega_{\overline{\mathcal C},i}$.
Furthermore the anomalies $\zeta_{\mathcal C}^3=p_{\mathcal C}^+/\lambda_{C}$ 
are inverse to each other, namely 
 $\zeta_{\overline{\mathcal C}}^{3}=
\zeta_{\mathcal C}^{-3}$. 
Therefore  
The $S$ matrix associated to the mirror category 
has  its entries  ${S({\overline{\mathcal C}})}_{ij}$ equal to 
${S({{\mathcal C}})}_{i^*j}$ (from \cite{Tu}, II.1.9, Ex. 1.9.(2)). 
At the last the matrix ${T({\overline{\mathcal C}})}_{ij}$ is 
the inverse of ${T({{\mathcal C}})}_{ij}$ since 
${T({\overline{\mathcal C}})}_{ij}=\omega_{{\mathcal C},i}^{-1}\delta_{ij}$. 

On the other hand we have the representation 
$\widetilde{\rho_{\mathcal C}^{\lambda,\zeta}}(x)=
\rho_{\mathcal C}^{\lambda,\zeta}(JxJ^{-1})$ defined above. 
We have the following identities, where $\zeta$ stands for $\zeta_{\mathcal C}$: 
\begin{equation}
\rho_{\overline{\mathcal C}}^{\lambda,\zeta^{-1}}({\mathfrak s}) = 
\lambda^{-1} \left(S({\mathcal C})_{i^*j}\right)=
\lambda^{-1} S({\mathcal C})^{-1}=
\widetilde{\rho_{\mathcal C}^{\lambda,\zeta}}({\mathfrak s})
\end{equation}
\begin{equation}
\rho_{\overline{\mathcal C}}^{\lambda,\zeta^{-1}}({\mathfrak t}) = 
\zeta T({\mathcal C})^{-1}= 
\widetilde{\rho_{\mathcal C}^{\lambda,\zeta}}({\mathfrak t})
\end{equation}
Therefore the two representations agree on every element 
\begin{equation} 
\rho_{\overline{\mathcal C}}^{\lambda,\zeta^{-1}}(x)=
\widetilde{\rho_{\mathcal C}^{\lambda,\zeta}}(x),\, {\rm for \;  any }\,\;  x\in SL(2,\Z)
\end{equation}

Now  using Proposition \ref{invar} and Lemmas \ref{double} 
and \ref{anomalyfree}  we obtain the identities: 
\begin{equation}
RT_{\mathcal C}(M_A)\cdot RT_{\overline{\mathcal C}}(M_A)=
{\rm Tr}(\rho_{{\mathcal C}}^{\lambda,\zeta}(A)) 
{\rm Tr}(\widetilde{\rho_{\mathcal C}^{\lambda,\zeta}}(A))=
{\rm Tr}(\rho_{{\mathcal C}}^{\lambda,\zeta}\otimes 
\widetilde{\rho_{\mathcal C}^{\lambda,\zeta}}(A))
={\rm Tr}(\rho_{D({\mathcal C})}^{\lambda, 1}(A)=TV_{\mathcal C}(M_A)
\end{equation} 

A more direct proof of  Proposition \ref{RTV} comes from 
the recent proof by Turaev and Virelizier (see \cite{TV}) 
of the formula $|M|_{\mathcal C}=RT_{D(\mathcal C}(M)$ for any oriented 
3-manifold, 
 and spherical fusion category of non-zero dimension ${\mathcal C}$. 
Here $|M|_{\mathcal C}$ is the simplicial 6j-symbol 
state sum defined in (\cite{Tu}, section 4). According to 
(\cite{Tu}, section IV,Thm.4.1.1) we have 
$|M|_{\mathcal C}=RT_{\mathcal C}(M)RT_{\mathcal C}(\overline{M})$. 

\end{proof}

\begin{corollary}\label{TV}
There exist pairs of matrices $A$, $B$ such that 
$M_A$ and $M_B$ are not homeomorphic but 
$|RT_{\mathcal C}(M_A)|=|RT_{\mathcal C}(M_B)|$ for any Hermitian 
modular tensor category ${\mathcal C}$. 
\end{corollary}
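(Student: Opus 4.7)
The plan is to combine Theorem \ref{TVagree} with Proposition \ref{RTV}, and to exploit the Hermitian hypothesis to recover $|RT_{\mathcal C}(M_A)|^2$ from the invariant of the connected sum $M_A\#\overline{M_A}$. First I would take any of the infinitely many pairs $(A,B)$ of Anosov matrices furnished by Theorem \ref{TVagree}, so that $M_A$ and $M_B$ are non-homeomorphic torus bundles with $TV_{\mathcal D}(M_A)=TV_{\mathcal D}(M_B)$ for every spherical fusion category $\mathcal D$. In particular, since any modular tensor category is spherical fusion, for the fixed Hermitian modular tensor category $\mathcal C$ we already get $TV_{\mathcal C}(M_A)=TV_{\mathcal C}(M_B)$, and Proposition \ref{RTV} translates this into
\begin{equation*}
RT_{\mathcal C}(M_A\#\overline{M_A})=RT_{\mathcal C}(M_B\#\overline{M_B}).
\end{equation*}

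Second, I would invoke multiplicativity of $RT_{\mathcal C}$ under connected sums together with the defining property of a Hermitian modular tensor category, namely that orientation reversal corresponds to complex conjugation of the invariant: $RT_{\mathcal C}(\overline M)=\overline{RT_{\mathcal C}(M)}$ for every closed oriented $3$-manifold $M$. These two facts combine to give the key identity
\begin{equation*}
RT_{\mathcal C}(M\#\overline{M})=RT_{\mathcal C}(M)\cdot\overline{RT_{\mathcal C}(M)}=|RT_{\mathcal C}(M)|^{2}.
\end{equation*}
Applying it to both $M_A$ and $M_B$ and using the equality from the first step yields $|RT_{\mathcal C}(M_A)|^{2}=|RT_{\mathcal C}(M_B)|^{2}$, and hence $|RT_{\mathcal C}(M_A)|=|RT_{\mathcal C}(M_B)|$, as required.

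The main point to verify carefully is the Hermitian identity $RT_{\mathcal C}(\overline M)=\overline{RT_{\mathcal C}(M)}$. In the formalism used here, the proof of Proposition \ref{RTV} has already established in full generality that $RT_{\mathcal C}(\overline M)=RT_{\overline{\mathcal C}}(M)$, where $\overline{\mathcal C}$ denotes the mirror category. The Hermitian hypothesis is precisely what ensures that $\overline{\mathcal C}$ is equivalent, as a ribbon fusion category, to the complex conjugate of $\mathcal C$ (its $S$ and $T$ matrices being the entrywise complex conjugates of those of $\mathcal C$), so that $RT_{\overline{\mathcal C}}(M)=\overline{RT_{\mathcal C}(M)}$. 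No further topological input beyond Theorem \ref{TVagree} is required, and the non-homeomorphism of $M_A$ and $M_B$ is automatic from that theorem, since it is stated at the level of fundamental groups.
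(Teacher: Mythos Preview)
Your proposal is correct and follows essentially the same route as the paper: both combine Theorem~\ref{TVagree} with Proposition~\ref{RTV} and the Hermitian identity $RT_{\mathcal C}(\overline{M})=\overline{RT_{\mathcal C}(M)}$ to obtain $|RT_{\mathcal C}(M)|^2=TV_{\mathcal C}(M)$, then conclude. The paper additionally remarks on the anomaly-freeness of $D(\mathcal C)$ and invokes Vafa's theorem, but these comments are not needed for the argument as you have presented it.
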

\begin{proof}
It is known that $RT_{\mathcal C}(\overline{M})=\overline{RT_{\mathcal C}(M)}$,  
for any Hermitian modular tensor category ${\mathcal C}$ (\cite{Tu}, II.5, 
Thm. 5.4) so the previous 
Proposition gives us  
$|RT_{\mathcal C}(M)|^2=TV_{\mathcal C}(M)$. 
Also the TQFT associated to  
${D(\mathcal C)}$ is anomaly-free. This 
follows from the fact that the  mapping class group representations 
in genus $g$ associated to $\mathcal C$ and $D(\mathcal C)$ satisfy 
$\rho_{g, D(\mathcal C)}=\rho_{g, \mathcal C}\otimes 
\widetilde{\rho}_{g, \mathcal C}$, in every genus $g$. In fact  
the right hand side tensor product is well-defined even when 
we have only projective representations and this shows that 
$\rho_{D(\mathcal C)}$ is a genuine linear representation so that 
the associated TQFT is anomaly-free.  

Eventually, 
the so-called Vafa's theorem (see \cite{Va,Et}) shows that the anomaly 
$\zeta^3$ of the TQFT
associated to $D(\mathcal C)$ is a root of unity for every modular 
tensor category $\mathcal C$ (actually it is enough  
to know that $|\zeta^3|=1$) 
and hence the associated invariants verify the claim.   
\end{proof}

\subsection{Congruence subgroups}

We will prove now: 
\begin{proposition}\label{modular}
Suppose that some modular 
representation $\rho_{\mathcal C}^{\lambda, \zeta}$ associated to the 
modular tensor category $\mathcal C$ factors through $SL(2,\Z/N\Z)$.
Let $A$ and $B$ be two integral matrices whose reductions mod $N$ 
are conjugate. If $\varphi(A)=\varphi(B)$ 
then $RT_{\mathcal C}(M_A)=RT_{\mathcal C}(M_B)$.  

Henceforth, if $A$ and $B$ are conjugate in {\em every} congruence quotient 
of $SL(2,\Z)$ then  $RT_{\mathcal C}(M_A)=RT_{\mathcal C}(M_B)$ for any 
modular tensor category $\mathcal C$ with the congruence property 
if and only if 
\begin{equation}
\varphi(A)=\varphi(B)
\end{equation} 
\end{proposition}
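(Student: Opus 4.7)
The plan is to apply Proposition \ref{invar} to rewrite
\begin{equation*}
RT_{\mathcal C}(M_A) = \zeta^{-3\varphi(A)}\,\Tr(\rho_{\mathcal C}^{\lambda,\zeta}(A))
\end{equation*}
and analogously for $B$, and then to control separately the trace factor and the Rademacher--Meyer phase prefactor. The trace equality is obtained exactly as in the proof of Proposition \ref{modularaf}: since $\nu_N\colon SL(2,\Z)\to SL(2,\Z/N\Z)$ is surjective, any conjugator of $\nu_N(A)$ and $\nu_N(B)$ lifts to some $T\in SL(2,\Z)$, and the factorization $\rho_{\mathcal C}^{\lambda,\zeta}=\bar\rho\circ\nu_N$ yields $\rho_{\mathcal C}^{\lambda,\zeta}(A)=\rho_{\mathcal C}^{\lambda,\zeta}(T)\rho_{\mathcal C}^{\lambda,\zeta}(B)\rho_{\mathcal C}^{\lambda,\zeta}(T)^{-1}$, hence $\Tr(\rho_{\mathcal C}^{\lambda,\zeta}(A))=\Tr(\rho_{\mathcal C}^{\lambda,\zeta}(B))$. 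Combined with the hypothesis $\varphi(A)=\varphi(B)$ which makes the two phase prefactors coincide, this establishes the first assertion.

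The ``if and only if'' in the second assertion is then almost automatic. The ``if'' direction is a direct application of the first part to each modular tensor category $\mathcal C$ with the congruence level $N_{\mathcal C}$ supplied by the congruence property. For the ``only if'' direction, assume that $A$ and $B$ are locally conjugate but $\varphi(A)\neq\varphi(B)$. The trace equality above yields
\begin{equation*}
RT_{\mathcal C}(M_A) - RT_{\mathcal C}(M_B) = \bigl(\zeta^{-3\varphi(A)}-\zeta^{-3\varphi(B)}\bigr)\,\Tr(\rho_{\mathcal C}^{\lambda,\zeta}(A)),
\end{equation*}
so it suffices to exhibit a single congruence-property modular tensor category $\mathcal C$ whose anomaly root $\zeta$ has order not dividing $3(\varphi(A)-\varphi(B))$ and on which the trace is nonzero. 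The $U(1)$-theories are the natural family of candidates, since their anomalies $\zeta^3$ exhaust primitive roots of unity of arbitrary order and their modular representations are explicit Weil representations of $SL(2,\Z)$ factoring through small congruence quotients.

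The main obstacle I expect is precisely the nonvanishing of the trace in the ``only if'' step: for a given locally conjugate pair $(A,B)$ one must rule out accidental simultaneous cancellation across all abelian TQFTs that could otherwise discriminate the relevant phases. In the worst case this amounts to an explicit Gauss-sum computation in the Weil representation evaluated on the matrix $A$. Alternatively one can secure nonvanishing by tensoring a well-chosen $U(1)$-theory (contributing the desired anomaly) with an auxiliary congruence-property modular tensor category arranged so that its trace on $A$ is nonzero, using multiplicativity of $RT$ under Deligne products of the underlying categories.
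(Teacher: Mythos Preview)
Your argument for the first assertion and for the ``if'' direction of the second is exactly the paper's: invoke Proposition~\ref{invar}, factor $\rho_{\mathcal C}^{\lambda,\zeta}$ through $SL(2,\Z/N\Z)$, lift the conjugator via the surjectivity of $\nu_N$ (Lemma~\ref{surj}), and conclude equality of traces; the phase prefactors then match under the hypothesis $\varphi(A)=\varphi(B)$.

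For the ``only if'' direction you are more scrupulous than the paper. The paper simply writes that equality of invariants holds if and only if $\zeta^{-3\varphi(A)}=\zeta^{-3\varphi(B)}$, and then observes that there exist modular categories whose anomaly $\zeta^3$ is a root of unity of arbitrarily large order, whence $\varphi(A)=\varphi(B)$. It does not isolate or address the nonvanishing-of-trace issue you raise. Your formula
\[
RT_{\mathcal C}(M_A)-RT_{\mathcal C}(M_B)=\bigl(\zeta^{-3\varphi(A)}-\zeta^{-3\varphi(B)}\bigr)\Tr\bigl(\rho_{\mathcal C}^{\lambda,\zeta}(A)\bigr)
\]
makes clear that one must also ensure the trace factor is nonzero for the chosen category; the paper's argument tacitly assumes this. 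Your proposed remedy via abelian (Weil) theories, or via a Deligne product to force a nonzero trace while retaining a large-order anomaly, is a reasonable way to close this point, though neither is carried out in the paper.
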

\begin{proof}
According to the Proposition \ref{invar} the invariant 
$RT_{\mathcal C}(M_A)$ is the trace of the endomorphism 
$\rho_{\mathcal C}^{\lambda, \zeta}(A)$ up to the factor $\zeta^{-3\varphi(A)}$. 
By hypothesis 
$\rho_{\mathcal C}^{\lambda, \zeta}$ factorizes as 
 $\rho_{\mathcal C}^{\lambda, \zeta}=
\rho_{\mathcal C}^{\lambda, \zeta,N}\circ \nu_N $, where 
$\nu_N:SL(2,\Z)\to SL(2,\Z/N\Z)$ is  the homomorphism of 
reduction mod $N$.

Since $\nu_N$ is surjective (see Lemma \ref{surj}) there exists 
$T\in SL(2,\Z)$ such that $\nu_N(A)=\nu_N(T^{-1}BT)$. 
Therefore 
\begin{equation}
{\rm Tr}(\rho_{\mathcal C}^{\lambda, \zeta}(A))=
{\rm Tr}\left(\rho_{\mathcal C}^{\lambda, \zeta,N}(\nu_N(T))\cdot 
\rho_{\mathcal C}^{\lambda, \zeta,N}(\nu_N(B))\cdot 
\left(\rho_{\mathcal C}^{\lambda, \zeta,N}(\nu_N(T))\right)^{-1}\right)=
{\rm Tr}(\rho_{\mathcal C}^{\lambda, \zeta}(B)) 
\end{equation}
Thus we have equality of quantum invariants of 
$M_A$ and $M_B$ if and only if $\zeta^{-3\varphi(A)}=\zeta^{-3\varphi(B)}$. 
Since there are modular categories whose anomaly $\zeta^3$ is a root of unity 
of arbitrary large degree the claim follows. 
\end{proof}

\begin{remark}
Eholzer conjectured in (\cite{E} that modular 
representations $\rho_{\mathcal C}^{\lambda, 1}$ have the congruence property 
for every RCFT, or in somewhat equivalent terms, for every modular tensor 
category. If Eholzer's conjecture were true, 
then  Proposition \ref{modular} would 
imply that $RT_{\mathcal C}(M_A)=RT_{\mathcal C}(M_B)$ for every 
modular tensor category ${\mathcal C}$ if $A$ and $B$ 
are conjugate in every congruence quotient of $SL(2,\Z)$ and 
$\varphi(A)=\varphi(B)$.  
\end{remark}
\begin{remark}
Notice that Ng and Schauenburg proved in \cite{NS} that the projective 
representation $\overline{\rho}_{\mathcal C}$ has the congruence property 
for every  modular tensor category ${\mathcal C}$. However this does not 
imply that some linear lift  $\rho_{\mathcal C}^{\lambda, \zeta}$  of it 
has also the congruence property (see \cite{NS}, section 7). 
Moreover, it is not clear whether the fact that  
$\rho_{\mathcal C}^{\lambda, \zeta}$ has the congruence property 
for one particular value of $(\lambda,\zeta)$ would imply that all 
modular representations $\rho_{\mathcal C}^{\lambda, \zeta}$ do have it.  
\end{remark}

\begin{example}
Consider now the torus bundles  $M_A$ and $M_B$, where 
$A=\left(\begin{array}{cc}
188 & 275 \\
121 & 177 \\
\end{array}
\right)$ and $B=\left(\begin{array}{cc}
188 & 11 \\
3025 & 177 \\
\end{array}
\right)$.  
We proved above that the absolute values of all TQFT 
invariants agree on $M_A$ and $M_B$. 
In order to find the phase factors of these invariants we have to compute 
$\varphi(A)$ and $\varphi(B)$. 
We use the reciprocity law for Dedekind sums which reads:
\begin{equation}
12s(\alpha,\gamma)+12s(\gamma,\alpha)=\frac{\gamma}{\alpha}+\frac{\alpha}{\gamma}+\frac{1}{\alpha\gamma}-3{\rm sgn}(\alpha\gamma)
\end{equation}
and the obvious $s(\alpha,\gamma)=s(\alpha',\gamma)$ if 
$\alpha\equiv\alpha'({\rm mod}\, \gamma)$. 
We find therefore 
\[ \varphi(A)=-1, \,\varphi(B)=-21\]
\end{example}

\section{Proof of Proposition \ref{classif}}
Recall from the Introduction that 
\begin{equation} 
\Gamma_A=\langle t,a,b | ab=ba, tat^{-1}=a^{\alpha_{11}}b^{\alpha_{12}}, 
tbt^{-1}=a^{\alpha_{21}}b^{\alpha_{22}}\rangle 
\end{equation} 
where 
$A\in SL(2,\Z)$ is a matrix  with entries $\alpha_{ij}$, 
$1\leq i,j\leq 2$, such that  ${\rm Tr}(A)\neq 2$. 
We have the  following exact sequence: 
\begin{equation}
1\to \Z^2\stackrel{i_A}{\to} \Gamma_A\stackrel{p_A}{\to} \Z\to 1
\end{equation}
defined by 
\begin{equation}
i_A(1,0)=a, i_A(0,1)=b, p_A(t)=1, p_A(a)=p_A(b)=0
\end{equation}
\begin{proposition}
The abelian subgroup $i_A(\Z^2)\subset \Gamma_A$ is 
the radical  set $R_A$ of $[\Gamma_A,\Gamma_A]$ 
in $\Gamma_A$, namely the set of those 
$x\in \Gamma_A$ for which there exists some $k\neq 0$ 
such that $x^k\in [\Gamma_A,\Gamma_A]$.  
\end{proposition}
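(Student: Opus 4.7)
The plan is to prove the two inclusions $i_A(\Z^2) \subseteq R_A$ and $R_A \subseteq i_A(\Z^2)$ separately, using the abelianization of $\Gamma_A$ combined with the hypothesis $\mathrm{Tr}(A) \neq 2$.

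For the easier inclusion $R_A \subseteq i_A(\Z^2)$, I would use the projection $p_A: \Gamma_A \to \Z$. If $x \in R_A$, then $x^k \in [\Gamma_A, \Gamma_A]$ for some $k \neq 0$. Since $\Z$ is abelian, $[\Gamma_A, \Gamma_A] \subseteq \ker(p_A)$, so $k \cdot p_A(x) = p_A(x^k) = 0$. Because $\Z$ is torsion-free this forces $p_A(x) = 0$, hence $x \in \ker(p_A) = i_A(\Z^2)$.

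For the inclusion $i_A(\Z^2) \subseteq R_A$, I would analyze the abelianization $\Gamma_A^{\mathrm{ab}} = \Gamma_A/[\Gamma_A,\Gamma_A]$ directly from the presentation. The relations $tat^{-1} = a^{\alpha_{11}}b^{\alpha_{12}}$ and $tbt^{-1} = a^{\alpha_{21}}b^{\alpha_{22}}$, once abelianized, become the linear equations $(A - I)\begin{pmatrix}\bar a \\ \bar b\end{pmatrix} = 0$ in additive notation. Consequently
\begin{equation*}
\Gamma_A^{\mathrm{ab}} \;\cong\; \Z \oplus \mathrm{coker}(A - I),
\end{equation*}
where the $\Z$ factor is generated by the image of $t$ and the second summand receives $i_A(\Z^2)$. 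Now
\begin{equation*}
\det(A - I) = \det(A) - \mathrm{Tr}(A) + 1 = 2 - \mathrm{Tr}(A) \neq 0
\end{equation*}
by the standing hypothesis, so $\mathrm{coker}(A-I)$ is a finite abelian group. Therefore the image of every element of $i_A(\Z^2)$ in $\Gamma_A^{\mathrm{ab}}$ has finite order, which means it admits a nonzero power lying in $[\Gamma_A,\Gamma_A]$. This gives $i_A(\Z^2) \subseteq R_A$.

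Combining these, $R_A = i_A(\Z^2)$. There is no real obstacle here: the only subtlety is noticing that the hypothesis $\mathrm{Tr}(A) \neq 2$ is exactly what makes $A - I$ nonsingular over $\Q$, which is the precise input needed to conclude that the image of $\Z^2$ in the abelianization is torsion. The proof is purely algebraic and does not use the Anosov assumption.
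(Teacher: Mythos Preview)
Your proof is correct. The inclusion $R_A \subseteq i_A(\Z^2)$ is handled exactly as in the paper, via the projection $p_A$ and torsion-freeness of $\Z$.

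For the reverse inclusion the paper takes a longer route: it first establishes a normal form $t^s a^n b^m$ for elements of $\Gamma_A$, then explicitly identifies $[\Gamma_A,\Gamma_A]$ with $i_A((A-\mathbf 1)\Z^2)$ by a commutator computation using the Hall identities, and finally observes that this subgroup has index $|\det(A-\mathbf 1)|=|{\rm Tr}(A)-2|$ in $i_A(\Z^2)$. Your argument reaches the same endpoint more directly by reading off $\Gamma_A^{\rm ab}\cong \Z\oplus{\rm coker}(A-I)$ from the presentation and noting that ${\rm coker}(A-I)$ is finite because $\det(A-I)=2-{\rm Tr}(A)\neq 0$. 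This is equivalent information (knowing the abelianization is the same as knowing the commutator subgroup), but your packaging avoids the normal-form lemma and the explicit commutator identification. The paper's extra work does buy the precise description $[\Gamma_A,\Gamma_A]=i_A((A-\mathbf 1)\Z^2)$, which is of independent interest even if not strictly needed for the Proposition itself.
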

\begin{proof}
Consider $x\in R_A$. By the definition of the radical 
set  there exists $k\neq 0$ such that 
$x^k\in [\Gamma_A,\Gamma_A]$ and thus the image of $x^k$ 
vanishes  in every abelian quotient of $\Gamma_A$. 
This implies that $p_A(x^k)=0$. Since $k\neq 0$ we have 
$p_A(x)=\frac{1}{k}p_A(x^k)=0$, which means that 
$x\in \ker p_A =i_A(\Z^2)$.

\begin{lemma}
Every element of $\Gamma_A$ can be uniquely 
written in the form $t^sa^nb^m$. 
\end{lemma}
\begin{proof}
For every $x\in \Z^2$ the conjugacy by the stable letter $t$  
can be expressed as follows: 
\begin{equation}
t i_A(x) t^{-1}=i_A(A(x)),\;  t^{-1} i_A(x) t= i_A(A^{-1}(x))
\end{equation}  
where $A(x)$ denotes the left multiplication by the 
matrix $A$ of the vector $x\in \Z^2$. 

Consider now a word in the generators containing at least one letter $t$.
We use the conjugacy relations above to move to the 
left every occurrence of the letter $t$ (or $t^{-1}$). 
If a leftmost subword of the new word is of the
 form $i_A(x)t^{\varepsilon}$, with 
non-zero $x\in \Z^2$ then rewrite it as  
$t^{\varepsilon}(t^{-\varepsilon}i_A(x)t^{\varepsilon})=
t^{\varepsilon}\cdot i_A(A^{-\varepsilon}(x))$ and continue. 
This process will stop eventually because there are only finitely many 
occurrences of $t$ and the resulting word will have the desired form. 

For the uniqueness it suffices to see that $\Gamma_A$ is a 
HNN extension with one stable letter and  conclude by the classical 
Britton's Lemma. 
\end{proof}

\begin{lemma}
Let  $U$ denote  the integral matrix  $A-{\mathbf 1}$. 
Then $[\Gamma_A,\Gamma_A]$ is the subgroup $i_A(U(\Z^2))$ of $R_A$. 
\end{lemma}
\begin{proof}
If $x\in \Z^2$ then we have the identities:  
\begin{equation}
[t,i_A(x)]=t i_A(x)t^{-1}i_A(x^{-1})=i_A(A(x)-x) 
\end{equation} 
This shows that $i_A(U(\Z^2))\subset [\Gamma_A,\Gamma_A]$. 
 
Conversely, let us consider $u\in i_A(U(\Z^2))$.
Then $tut^{-1}\in i_A(U(\Z^2))$ and $t^{-1}ut\in i_A(U(\Z^2))$ because
\begin{equation}
t i_A(A(x)-x)t^{-1} =i_A(A^2(x)-A(x))=i_A(A(y)-y), {\rm where }\; y=Ax
\end{equation} 
and 
\begin{equation}
t^{-1} i_A(A(x)-x)t =i_A(x-A^{-1}(x))=i_A(A(y)-y), {\rm where }\; y=A^{-1}x
\end{equation} 
Further  $p_A([\Gamma_A,\Gamma_A])=0$ so that 
$[\Gamma_A,\Gamma_A]$ is a subgroup of $R_A$, in particular it is abelian.  
Thus the action of $a$ and $b$ (or any $x\in i_A(\Z^2)$) 
by conjugacy on $i_A(U(\Z^2))$ is trivial.

Now $[\Gamma_A,\Gamma_A]$ is generated by 
the commutators of elements of $\Gamma_A$.
The Hall identities 
\begin{equation}
[xy, z]= y^{-1} [x,z]y \cdot  [y,z], \; 
[x,yz]=[x,z]\cdot  z^{-1} [x,y] z 
\end{equation}
Then a double recurrence on the number of letters in the words representing 
$x,y\in \Gamma_A$ and the previous observations about the conjugacy
by generators prove that 
$[x,y]\in i_A(U(\Z^2))$.  
\end{proof}

\begin{lemma}
If $A$ has no eigenvalue equal to $1$ then $[\Gamma_A,\Gamma_A]$ 
is a finite index  subgroup of $R_A$. 
\end{lemma}
\begin{proof}
Since ${\rm Tr}(A)\neq 2$ we have  $|\det(U)|=|{\rm Tr}(A)-2|\neq 0$ so that 
$U\Z^2\subset \Z^2$ is a subgroup of index $|\det(U)|$. Since $i_A$ 
is an isomorphism the Lemma follows. 
\end{proof}  
In particular, for every $x\in i_A(\Z^2)$ there exists some $k$ (which divides 
$|\det(U)|$) for which $x^k\in [\Gamma_A,\Gamma_A]$, as claimed.  
\end{proof}

Now any isomorphism $\phi:\Gamma_A\to \Gamma_B$ should restrict to 
an isomorphism $\phi:i_A(\Z^2)\to i_B(\Z^2)$. In fact, if $x\in R_A$ there 
exists $k\neq 0$ such that $x^k\in [\Gamma_A,\Gamma_A]$ so that 
$\phi(x)^k\in [\Gamma_B,\Gamma_B]$, meaning  that 
$\phi(x)\in R_B$. Now isomorphisms $\phi$ between free 
abelian groups are determined by some invertible matrix, namely 
\begin{equation}
\phi(i_A(x))=i_B(V(x)), {\rm for}\; x\in \Z^2
\end{equation}
where $V\in GL(2,\Z)$. 

\begin{lemma}
There exists $E\in \Z^2$ such that 
either $\phi(t)=ti_B(E)$ or $\phi(t)=t^{-1}i_B(E)$. 
\end{lemma}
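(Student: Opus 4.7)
\emph{Proof plan.} My plan is to pass to the abelian quotients $\Gamma_A/R_A$ and $\Gamma_B/R_B$ and argue that the induced isomorphism must send the class of $t$ to $\pm$ the class of $t$, after which the conclusion falls out of the uniqueness of the normal form already established.

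The starting point is the observation that the subgroup $R_A$ admits a purely intrinsic characterization (as the radical set of $[\Gamma_A,\Gamma_A]$ inside $\Gamma_A$), so every group isomorphism $\phi : \Gamma_A \to \Gamma_B$ is automatically forced to carry $R_A$ onto $R_B$; indeed this has already been checked in the paragraph preceding the lemma. Since $R_A = i_A(\Z^2) = \ker p_A$ is normal in $\Gamma_A$, and similarly $R_B$ is normal in $\Gamma_B$, the map $\phi$ descends to an isomorphism $\overline{\phi} : \Gamma_A/R_A \to \Gamma_B/R_B$ of the two cokernels, which by means of $p_A$ and $p_B$ are canonically identified with $\Z$, each generated by the image of $t$.

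The second step is the elementary remark that the only group automorphisms of $\Z$ are $\pm \mathrm{id}$, whence $\overline{\phi}$ sends the class of $t$ to $\pm 1$; equivalently $p_B(\phi(t)) = \pm 1$. I would then invoke the uniqueness part of the normal form lemma from earlier in the section: writing $\phi(t)$ uniquely in the form $t^{s} i_B(E)$ for some $s \in \Z$ and $E \in \Z^2$, one reads off $s = p_B(\phi(t)) = \pm 1$, which gives precisely $\phi(t) = t^{\pm 1} i_B(E)$ as required.

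I do not anticipate any real obstacle here: all the inputs needed (that $R_A$ is characteristic, that elements of $\Gamma_A$ have a unique $t^s i_A(x)$ expression, and that $\mathrm{Aut}(\Z) = \{\pm 1\}$) are already in place from the earlier lemmas of the section. If anything, one minor care point is to make sure that the quotient $\Gamma_A/R_A$ is really $\Z$ and not some extension—but this is immediate from the exact sequence $1 \to \Z^2 \to \Gamma_A \to \Z \to 1$ together with $R_A = i_A(\Z^2)$.
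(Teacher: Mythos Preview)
Your proposal is correct and follows essentially the same route as the paper: pass to the quotients $\Gamma_A/R_A \cong \Z$ and $\Gamma_B/R_B \cong \Z$, note that the induced isomorphism is $\pm\mathrm{id}_{\Z}$, and read off the conclusion. The paper's proof is terser but identical in substance; your explicit appeal to the normal-form lemma to extract $s=\pm 1$ is the one step the paper leaves implicit.
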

\begin{proof}
In fact  $\phi$ induces an isomorphism 
$\phi_*=\Gamma_A/R_A\to \Gamma_B/R_B$. 
Now both groups $\Gamma_A/R_A$ and  $\Gamma_B/R_B$ 
are isomorphic to $\Z$ and so $\phi_*$ is 
$\varepsilon {\bf 1}_{\Z}$ where $\varepsilon\in\{-1,1\}$. 
This is precisely the claim of the Lemma. 
\end{proof}

In order to get rid of the translation factor in $\phi$ 
we need the following extension result:  
\begin{lemma}
For every $E\in \Z^2$ there exists an automorphism 
$L_E:\Gamma_B\to \Gamma_B$ 
such that 
\begin{equation}
L_E(ti_B(x))=ti_B(x+E), L_E(i_B(x))=i_B(x), \; {\rm  for\, every} \;
x\in \Z^2
\end{equation}
\end{lemma}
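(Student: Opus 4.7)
The natural approach is to define $L_E$ directly on the generators of $\Gamma_B$ by
\[
L_E(t) = t\, i_B(E), \qquad L_E(a) = a, \qquad L_E(b) = b,
\]
and then to show that this prescription is compatible with the defining relations of $\Gamma_B$, so that it extends to a well-defined endomorphism. Once extended, the two requirements of the lemma, namely $L_E(i_B(x)) = i_B(x)$ and $L_E(t\, i_B(x)) = t\, i_B(x+E)$ for every $x \in \Z^2$, follow immediately from the definition together with the fact that $i_B(\Z^2)$ is abelian.

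To verify that the relations of the presentation
\[
\Gamma_B = \langle t, a, b \mid ab = ba,\; tat^{-1} = a^{\beta_{11}} b^{\beta_{12}},\; tbt^{-1} = a^{\beta_{21}} b^{\beta_{22}} \rangle,
\]
where $\beta_{ij}$ denote the entries of $B$, are preserved, note first that $ab = ba$ is trivially respected since $a$ and $b$ are fixed. The key observation for the other two relations is that conjugation by $t\, i_B(E)$ agrees with conjugation by $t$ on every element of $i_B(\Z^2)$: indeed, for $y \in i_B(\Z^2)$,
\[
(t\, i_B(E))\, y\, (t\, i_B(E))^{-1} = t \bigl(i_B(E)\, y\, i_B(E)^{-1}\bigr) t^{-1} = t y t^{-1},
\]
the last equality holding because $i_B(\Z^2)$ is abelian. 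Applying this with $y = a$ and $y = b$ yields
\[
L_E(t)\, L_E(a)\, L_E(t)^{-1} = t a t^{-1} = a^{\beta_{11}} b^{\beta_{12}} = L_E(a^{\beta_{11}} b^{\beta_{12}}),
\]
and analogously for the third relation. Hence $L_E$ extends to a well-defined group endomorphism of $\Gamma_B$.

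Finally, to upgrade $L_E$ to an automorphism, I would exhibit $L_{-E}$ as a two-sided inverse. Evaluating on generators,
\[
L_E(L_{-E}(t)) = L_E(t\, i_B(-E)) = t\, i_B(E)\, i_B(-E) = t,
\]
while both $L_E$ and $L_{-E}$ fix $a$ and $b$, so $L_E \circ L_{-E}$ is the identity on a generating set, and by symmetry the same holds for $L_{-E} \circ L_E$. I do not anticipate any genuine obstacle here: the entire argument rests on the single fact that $i_B(\Z^2)$ is an abelian normal subgroup, so ``twisting'' the stable letter $t$ by an element of this subgroup does not alter the conjugation action on it, and the only bookkeeping is to check that the presentation relations go through, which they do for free.
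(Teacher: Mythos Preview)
Your proof is correct and follows essentially the same approach as the paper: define $L_E$ on generators by $L_E(t)=t\,i_B(E)$, $L_E(a)=a$, $L_E(b)=b$, verify the defining relations are preserved, and exhibit $L_{-E}$ as inverse. The only cosmetic difference is that you check the conjugation relations via the one-line observation that $(t\,i_B(E))\,y\,(t\,i_B(E))^{-1}=tyt^{-1}$ for $y\in i_B(\Z^2)$ by abelianness, whereas the paper carries out the equivalent direct computation $L_E(txt^{-1})=i_B(B(x))$.
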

\begin{proof}
We have to show that the homomorphism defined on the generators by:  
\begin{equation}
L_E(t)=ti_B(E), L_E(a)=a,  L_E(b)=b, 
\end{equation}
is well-defined. First we compute:  
\begin{equation}
L_E(t^{-1}) =(ti_B(E))^{-1}= i_B(-E)t^{-1}=t^{-1}\cdot ti_B(-E)t^{-1}=
t^{-1}i_B(-B(E))
\end{equation}
It suffices  now to verify that  
the relations in $\Gamma_B$ are preserved, namely at first:   
\begin{equation}
L_E(txt^{-1})=ti_B(x +E)t^{-1}i_B(-B(E))=
i_B(B(x+E))i_B(-B(E))=i_B(B(x))
\end{equation}
for $x\in \Z^2$, and second $L_E(ab)=L_E(ba)$, which is obvious. 
Hence $L_E$ defines a homomorphism, whose inverse is $L_{-E}$ 
which implies that $L_E$ is an automorphism of $\Gamma_B$. 
An immediate computation shows that  
\begin{equation}
L_C(t^si_B(x))=
\left\{\begin{array}{ll}
t^si_B(x+E + B(E)+B^2(E) +\cdots + B^{s-1}(E)), &{\rm if }\; s\geq 1\\
x, &{\rm if }\; s=0\\
t^si_B(x-B^{-1}(E)-B^{-2}(E)-\cdots - B^{-s}(E)), & {\rm if }\; s\leq -1
\end{array}
\right.
\end{equation}  
This proves the Lemma. 
\end{proof}

We replace  now  the isomorphism $\phi$ by  the composition 
$L_{-E}\circ \phi:\Gamma_A\to \Gamma_B$, which 
has trivial translation part and keep the same notation 
$\phi$ for the new isomorphism which has the property that 
\begin{equation}
\phi(t)=t^{\varepsilon}, {\rm where }\; \phi_*=\varepsilon {\bf 1}_{\Z}
\end{equation}
Recall now that $ti_A(x)t^{-1}=i_A(A(x))$, for any $x\in \Z^2$. 
If $\varepsilon=1$ then on one hand we have 
\begin{equation}
\phi(ti_A(x)t^{-1})=t\phi(i_A(x))t^{-1}=t i_B(V(x)t^{-1}=
i_B(BV(x))
\end{equation} 
and on the other hand:  
\begin{equation}
\phi(ti_A(x)t^{-1})=\phi(i_A(A(x))=i_B(VA(x))
\end{equation} 
The two right hand side terms from above must coincide, 
so $i_B(VA(x))=i_B(BV(x))$ for every $x\in i(\Z^2)$, 
which implies $VA=BV$ so that $A$ and $B$ are conjugate 
within $GL(2,\Z)$.

\begin{lemma}
There is an automorphism 
$J:\Gamma_B\to \Gamma_{B^{-1}}$ given by 
$J(t)=t^{-1}$, $J(a)=a$, $J(b)=b$. 
\end{lemma}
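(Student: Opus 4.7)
The plan is to interpret the statement as asserting an isomorphism $J:\Gamma_B\to \Gamma_{B^{-1}}$ (the source and target are different groups, so ``automorphism'' is presumably a typo), and then to verify it by a direct check on the presentation. Concretely, I will define $J$ on the generators by $J(t)=t^{-1}$, $J(a)=a$, $J(b)=b$, show that it respects the three defining relations of $\Gamma_B$, and then exhibit an inverse of the same shape.

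For the well-definedness, the relation $ab=ba$ is preserved trivially since $J$ is the identity on $a,b$. The conjugation relations in $\Gamma_B$ can be packaged uniformly as $t\cdot i_B(x)\cdot t^{-1}=i_B(Bx)$ for every $x\in\Z^2$ (specializing to $x=(1,0)$ and $(0,1)$ recovers the two stated relations). Applying $J$ to this identity gives the requirement $t^{-1}\cdot i_{B^{-1}}(x)\cdot t = i_{B^{-1}}(Bx)$ in $\Gamma_{B^{-1}}$. But by the analogous packaging of the defining relations of $\Gamma_{B^{-1}}$, we have $t\cdot i_{B^{-1}}(y)\cdot t^{-1}=i_{B^{-1}}(B^{-1}y)$ for every $y\in\Z^2$; conjugating this identity by $t^{-1}$ (equivalently, applying the previous lemma's computation) yields exactly $t^{-1}\cdot i_{B^{-1}}(x)\cdot t=i_{B^{-1}}(Bx)$. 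So the image of every relation holds in $\Gamma_{B^{-1}}$, and the universal property of group presentations produces the homomorphism $J$.

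To see that $J$ is bijective, I would define by the same formula a homomorphism $J':\Gamma_{B^{-1}}\to\Gamma_B$ sending $t\mapsto t^{-1}$, $a\mapsto a$, $b\mapsto b$, whose well-definedness is proved in the same way (interchanging the roles of $B$ and $B^{-1}$). Both compositions $J'\circ J$ and $J\circ J'$ fix each generator and hence are the identity, so $J$ is an isomorphism.

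There is no real obstacle here; the only thing to be careful about is bookkeeping: the relations of $\Gamma_B$ involve conjugation by $t$ with matrix $B$, whereas the relations of $\Gamma_{B^{-1}}$ involve conjugation by $t$ with matrix $B^{-1}$, and one must keep track of the fact that after substituting $t\mapsto t^{-1}$ these are swapped in exactly the right way. Once $J$ is established, it fits immediately into the preceding argument: combined with the case $\varepsilon=1$ already treated, applying $J$ first in the case $\varepsilon=-1$ reduces the situation to $\varepsilon=1$, showing that any isomorphism $\Gamma_A\cong\Gamma_B$ forces $A$ to be $GL(2,\Z)$-conjugate to $B$ or to $B^{-1}$, which is the content of Proposition~\ref{classif}.
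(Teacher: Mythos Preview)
Your proof is correct and is precisely the direct computation the paper alludes to with ``Clear, by direct computation.'' You have simply spelled out the verification of the relations and the construction of the inverse, which is exactly what is intended.
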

\begin{proof}
Clear, by direct computation. 
\end{proof} 

Assume now that  $\varepsilon=-1$. We consider then 
the isomorphism $J\circ\phi:\Gamma_A\to \Gamma_{B^{-1}}$, which 
satisfies:
\begin{equation}
J\circ\phi(t)=t, \; J\circ\phi(i_A(x))=i_B(V(x)),\,  
{\rm for}\; x\in \Z^2
\end{equation}
The argument from above shows  now that 
$A$ and $B^{-1}$ are conjugate by the 
matrix $V\in GL(2,\Z)$. 
This proves Proposition \ref{classif}. 

\begin{remark}
The result holds more generally when  
$A$ and $B\in GL(2,\Z)$ and  $|{\rm Tr}(A)|\neq 2 \neq|{\rm Tr}(B)$, with the same 
proof. 
\end{remark}

\section{Proof of Proposition \ref{noteq}}

\begin{proposition}\label{localeq}
Let $A=\left(\begin{array}{cc}
1 & kq^2 \\
kv & 1+k^2q^2v \\
\end{array}
\right)$ and $B=\left(\begin{array}{cc}
1 & k \\
kvq^2 & 1+k^2q^2v \\
\end{array}
\right)$ denote matrices from $SL(2,\Z)$, where $k\in\Z$,  
$q$ is an odd prime number  $q\equiv 1({\rm mod})\, 4)$,  
$v$ is a positive integer such that $-v$ is a non-zero quadratic residue 
mod $q$ which is  divisible either 
by a prime $p\equiv 3({\rm mod}\, 4)$, or by $4$. 
Then the following hold: 
\begin{enumerate}
\item For every natural $N$ there exists $T_N\in SL(2,\Z)$ such that 
$\nu_N(A)$ and $\nu_N(B)$ are conjugate by the 
matrix $\nu_N(T)$, where $\nu_N:\Z\to \Z/N\Z$ is the reduction mod $N$.
\item The matrix $A$ is conjugate neither to $B$ nor to $B^{-1}$ within $GL(2,\Z)$.
\end{enumerate}
\end{proposition}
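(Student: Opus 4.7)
Part (1), local conjugacy in every congruence quotient, reduces by the Chinese Remainder Theorem to producing $T \in SL(2,\Z/p^n\Z)$ conjugating $A$ into $B$ modulo each prime power $p^n$ of $N$; surjectivity of $SL(2,\Z)\to SL(2,\Z/N\Z)$ then yields the required $T_N\in SL(2,\Z)$. For $p\neq q$ the element $q$ is invertible modulo $p^n$ and a direct calculation shows $\mathrm{diag}(q^{-1},q)\,A\,\mathrm{diag}(q,q^{-1})=B\pmod{p^n}$. For $p=q$ both matrices reduce modulo $q$ to the unipotent matrices $\begin{pmatrix}1&0\\kv&1\end{pmatrix}$ and $\begin{pmatrix}1&k\\0&1\end{pmatrix}$. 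A standard computation shows that conjugation by $\begin{pmatrix}0&\lambda\\-\lambda^{-1}&0\end{pmatrix}$ sends the first to $\begin{pmatrix}1&-\lambda^2 kv\\0&1\end{pmatrix}$, matching $B$ modulo $q$ precisely when $\lambda^2\equiv -v^{-1}\pmod q$; the hypothesis that $-v$ is a nonzero quadratic residue modulo $q$ (note $q\nmid v$) supplies such a $\lambda$, and Hensel's lemma lifts the conjugator to $SL(2,\Z/q^n\Z)$.

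For part (2), parametrize $V=\begin{pmatrix}x&y\\z&w\end{pmatrix}\in GL(2,\Z)$ and expand the two conjugation equations entrywise. The system $VA=B^{-1}V$ forces $y=0$, giving $\det V=-q^2x^2$, which cannot equal $\pm 1$ for an odd prime $q\geq 5$; hence $A\not\sim B^{-1}$ in $GL(2,\Z)$. The system $VA=BV$ yields $z=vy$, $w=q^2(x+kvy)$, and
\[
\det V = q^2 x(x+kvy) - v y^2 = \pm 1.
\]
The case $\det V=-1$ reduces modulo $v$ to $q^2x^2\equiv -1\pmod v$, which is ruled out by the hypothesis on $v$: if $4\mid v$ then $-1$ is not a square modulo $4$, and if a prime $p\equiv 3\pmod 4$ divides $v$ then $-1$ is not a QR modulo $p$.

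The case $\det V=+1$ is the main obstacle. Substituting $X=2x+kvy$ rewrites it as the Pell-type equation
\[
q^2 X^2 - D y^2 = 4, \qquad D = v(k^2 q^2 v + 4),
\]
which I plan to show has no integer solutions. The key structural observation is that the eigenvalue $\alpha=(t+kq\sqrt D)/2$ of $A$ (where $t=2+k^2q^2v$) satisfies $\alpha-1 = kq\,\gamma$ with $\gamma=(kqv+\sqrt D)/2$ an algebraic integer of minimal polynomial $x^2 - kqv\,x - v$; therefore $\alpha\equiv 1\pmod{q\mathcal{O}_K}$ in the ring of integers of $K=\Q(\sqrt D)$. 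Integer solutions $(X,y)$ correspond to norm-one units $\eta=(qX+y\sqrt D)/2\in\mathcal{O}_K$ with $q\mid\Tr(\eta)$; writing $\alpha=\pm\eta_0^r$ for a fundamental norm-one unit $\eta_0$, the congruence $\alpha\equiv 1\pmod q$ gives $\eta_0^r\equiv\pm 1\pmod q$. The argument is completed by showing $r$ is odd, which forces the order of $\eta_0$ in $(\mathcal{O}_K/q)^*$ to be indivisible by $4$; since $q\equiv 1\pmod 4$ is the only way the split cyclic group $\F_q^*$ could contain a square root of $-1$, this odd-index property prevents $\Tr(\eta_0^n)$ from vanishing modulo $q$ for any $n$. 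I expect the hard part to be precisely this odd-index step, which reduces to an elementary examination of the Pell equation $M^2-Dy^2=4$ together with the factorization of $\alpha$ in $\mathcal{O}_K^*$, and constitutes the arithmetic core of the proposition.
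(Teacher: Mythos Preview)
Your Part~(1) is fine and matches the paper's argument (both use invertibility of $q$ for $p\neq q$ and the quadratic-residue hypothesis for $p=q$, lifting via Hensel).

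There is a concrete computational error in your treatment of $A\sim B^{-1}$. The system $VA=B^{-1}V$ does \emph{not} force $y=0$: solving it yields the two-parameter family
\[
V=\begin{pmatrix} x & y \\ v(kq^2x-y) & -q^2x \end{pmatrix},\qquad
\det V = -(q^2x^2 + kq^2vxy - vy^2).
\]
Thus $\det V=\pm 1$ gives exactly the same pair of Diophantine equations as in the $A\sim B$ analysis (with the two signs interchanged), and the same arguments dispose of both; this is precisely how the paper proceeds.

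For the core case $\det V=+1$, your approach via norm-one units in $\mathcal{O}_K$ is more conceptual than the paper's but, as you acknowledge, incomplete: the ``odd index'' step is the entire arithmetic content and you have not proved it. The paper bypasses this with a direct elementary argument. After your substitution (and a case split on the parity of $k$), the resulting Pell equation has an \emph{explicitly computable} minimal solution $(u_0,y_0)$ with $q\mid y_0$; for example, when $k=2n$ the equation $u^2-(n^2q^2v^2+v)y^2=1$ has minimal solution $(2n^2q^2v+1,\,2nq)$, found by the continued-fraction algorithm. The standard recurrence $y_{s+1}=y_0u_s+u_0y_s$ then gives $q\mid y_s$ for every solution, which contradicts the original equation modulo $q$. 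This is simpler and complete. Your unit reformulation secretly encodes the same fact---$\alpha$ \emph{is} the fundamental norm-one unit of the order $\Z[\sqrt{v(n^2q^2v+1)}]$, so $r=1$ there---but establishing that still amounts to exhibiting the minimal solution. A further caution: $D=v(k^2q^2v+4)$ need not be squarefree, so the full ring of integers $\mathcal{O}_K$ may contain a strictly smaller fundamental unit than $\Z[\alpha]$ does, and your odd-index claim as stated (in $\mathcal{O}_K$) could fail even though the conclusion is correct.
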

\begin{proof}
The conjugacy condition $TA=BT$ is equivalent to a 
linear system of equations having the  
2-parameter family of solutions 
$T(x,y)=\left(\begin{array}{cc}
x & y \\
v y & q^2x+ kq^2vy \\
\end{array}
\right)$. 
The matrix $T$ belongs to $SL(2,\Z)$ if and only if  
$x, y\in\Z$ and the determinant of $T$ is $1$, namely if and only if the 
quadratic Diophantine equation 
\begin{equation}\label{dioph1} 
q^2 x^2+ kq^2v xy-v y^2=1
\end{equation}
has integral solutions.

In a similar way $\nu_N(A)$ and $\nu_N(B)$ are conjugate by a matrix 
$T\in SL(2,\Z/N\Z)$ if and only if the equation (\ref{dioph1})
has solutions $x,y\in \Z/N\Z$. We can improve this last statement as follows:  

\begin{lemma}\label{surj}
The homomorphism $\nu_N:SL(m,\Z)\to SL(m,\Z/N\Z)$ is surjective. 
\end{lemma}
\begin{proof}
It is known that the group $SL(m,\Z/N\Z)$ is generated 
by the matrices of the form 
${\mathbf 1}+E_{ij}$, where $E_{ij}$ has only one non-zero entry, which is  
$1$, sitting in position $ij$. See (\cite{HM}, Thm. 4.3.9) for a proof. 

Here is an explicit construction when $m=2$. 
Let $U=\left(\begin{array}{cc}
u_{11} & u_{12} \\
u_{21} & u_{22} \\
\end{array}
\right)$ be an integral matrix whose reduction mod $N$ is a given 
matrix of $SL(2,\Z/N\Z)$.   
There exist integers $\alpha,\beta$ such that 
$\alpha u_{12}-\beta u_{11}={\rm g.c.d.}(u_{11},u_{12})$. 
Set then:
\[ T= \left(\begin{array}{cc}
u_{11} + N(\alpha+u_{11}\gamma) +1-{\det U} & 
u_{12}+ Nb + N(\beta+u_{12}\gamma)+1-{\det U} \\
u_{21}-\frac{1}{{\rm g.c.d}(u_{11},u_{12})}
u_{11}\gamma({\det U} -1) & u_{22}-
\frac{1}{{\rm g.c.d}(u_{11},u_{12})}u_{12}\gamma({\det U} -1)  \\
\end{array}
\right)
\]  
where $\gamma =\beta t_{21}-\alpha u_{22}$. 
Now $\nu_N(T)=\nu_N(U)$ because $\nu_N(\det U)\equiv 1\in \Z/N\Z$, 
and $\det T=1$ so that $T\in SL(2,\Z)$. 
\end{proof}

Therefore, if the Diophantine equation (\ref{dioph1}) has solutions in $\Z/q^s\Z$ 
for every prime $q$ then, for every natural $N$ 
there exists $T_N\in SL(2,\Z)$ such that 
$\nu_N(A)$ and $\nu_N(B)$ are conjugate by the matrix $\nu_N(T)$.

\begin{lemma}\label{modulo} 
If $-v$ is a non-zero quadratic residue mod $q$ then the equation (\ref{dioph1}) 
has solutions in $\Z/N\Z$ for every $N$. 
\end{lemma}
\begin{proof}
Let us show that this equation has solutions mod $p^l$ for every 
prime $p$ and positive integer $l$, which will imply that there exist 
solutions mod $N$ for every $N$.  

If $p\neq q$ then take $x=\overline{q}$ and $y=0$,  
where $\overline{a}$ denotes the inverse of $a$ mod $p^s$. 
If $p=q$ then $-v$ is also a quadratic residue 
mod $q^l$ for every positive $l$, by the quadratic reciprocity 
law and the fact that  $q\equiv 1({\rm mod}\, 4)$. 
Thus there exists  an invertible 
$z$ such that 
$-v\equiv z^2({\rm mod}\, q^l)$. Therefore 
$x=0$ and $y=\overline{z}$ is a solution mod $h^l$. 
\end{proof}
  
\begin{lemma}\label{noint}
If $q$ is an odd prime and $v$ is a positive integer 
then the equation $(\ref{dioph1})$ 
has not integral solutions. 
\end{lemma}
\begin{proof}
The discriminant is a perfect square $w^2$ such that 
\begin{equation}\label{dioph2}
w^2-(k^2q^4v^2+4q^2v) y^2=4q^2
\end{equation}

If $k=2n$ is even then $w$ is even and divisible by $q$ 
so  that we can put $w=2qu$, for some integer $u$ satisfying: 
\begin{equation}\label{dioph2b}
u^2-(n^2q^2v^2+v) y^2=1
\end{equation}

If $k$ is odd then $w=qu$ and the equation reads:
\begin{equation}\label{diopho}
u^2-(k^2q^2v^2+4v)y^2=4
\end{equation}

\begin{lemma}\label{mult}
If $(u,y)$ is an integer solution for either one of the equations 
(\ref{dioph2b}) or (\ref{diopho}) then $y$ is divisible by $q$. 
\end{lemma} 
\begin{proof}
Consider first $k$ even when the equation (\ref{dioph2b}) 
is a Pell equation. Let us remind briefly the theory of the Pell equation:
\begin{equation}\label{gpell}
u^2-Dy^2=1
\end{equation}
where $D$ is a positive integer, which is not a square. 
There exists only one minimal solution which can be constructed following 
classical results (see \cite{Mo,NZM}) as follows. We set 
\begin{equation}
P_0=0, Q_0=1, a_0=[\sqrt{D}]
\end{equation}
\begin{equation}
H_{-2}=1, H_{-1}=0, G_{-2}=-P_0, G_{-1}=Q_0
\end{equation} 
We define inductively:  
\begin{equation}
H_{i}=a_i H_{i-1}+H_{i-2}, G_{i}=a_i G_{i-1}+G_{i-2}
\end{equation}
\begin{equation}
P_{i}=a_{i-1}Q_{i-1}-P_{i-1}, Q_{i}=(D- P_{i}^2)/Q_{i-1} 
\end{equation}
\begin{equation}
a_i=\left[\frac{P_i +\sqrt{D}}{Q_i}\right]
\end{equation}
We have therefore 
\begin{equation}
G_{i-1}^2-DH_{i-1}^2=(-1)^i Q_i
\end{equation}
The algorithm for solving the Pell equation is as follows. 
Find  the smallest even integer $l\geq 1$ such that $Q_l=1$. 
Then $(G_{l-1}, H_{l-1})$ is the minimal  non-trivial solution 
$(u_0,y_0)$ to the Pell equation (\ref{gpell}). 

Moreover, any other  (positive) integral solution  can be obtained 
from the minimal one by means of the following recurrence: 
\begin{equation}
u_{s+1}=u_{0} u_s + D y_0 y_s, \, y_{s+1}= y_0 u_s  + x_0  y_s, \, {\rm for}\; s\geq 0
\end{equation}

The previous algorithm  (notice that $D$ is not a square) 
gives us the minimal solution for (\ref{dioph2b})
\begin{equation}
u_0=2n^2q^2v+1, \, y_0=2nq
\end{equation}
A recurrence on $s$ shows that $y_s$ is a multiple of $q$ 
for every $s\geq 0$. 

Assume now that $k$ is odd where the  
equation (\ref{diopho}) is a Pell-type equation.  

When $v$ is odd we can use the same algorithm  as used 
for the Pell equation  above to solve 
(\ref{diopho}) but starting from the initial data: 
\begin{equation}
P_0=1, \, Q_0=2
\end{equation}
because we are in the situation when $D\equiv 1({\rm mod}\, 4)$. 
Then the minimal solution is 
\begin{equation}
u_0=k^2q^2v+2, \, y_0=kq 
\end{equation}
and the same arguments show that all solutions $y$ are multiple of $q$. 

Eventually assume that $v$ is even, $v=2v'$, such that $u=2u'$ for some 
integer $u'$ and the equation (\ref{diopho}) becomes: 
\begin{equation}\label{diophoo}
u'^2-(k^2q^2v'^2+2v')y^2=1
\end{equation} 
One finds the minimal solutions 
\begin{equation}
u'_0=k^2q^2v'+1, \, y_0=kq 
\end{equation}
Thus all solutions $y$ are multiple of $y$. 
This proves Lemma \ref{mult}. 
\end{proof}

\begin{remark}
If $v$ is negative the minimal solutions are different, for instance 
when $v=-1$ and $k$ is even we have $u_0=\frac{k}{2}q$, 
$y_0=1$, so that the previous Lemma cannot be extended to negative $v$.  
\end{remark}

Going back to the original equation (\ref{dioph1}) if $y$ 
were a multiple of $q$ it would imply that $q$ divides $1$,  which is 
a contradiction. Thus (\ref{dioph1}) has not integral solutions and hence  
Lemma \ref{noint} is proved. 
\end{proof}

Further $-v$ was assumed to be a  quadratic residue modulo $q$. 
Thus Lemma \ref{modulo} shows that 
the equation (\ref{dioph1}) 
has solutions in $\Z/N\Z$ for every $N$ but has not integral solutions. 
In particular, the matrices $A$ and $B$ are not conjugate in $SL(2,\Z)$. 
In order to show that they are 
not conjugate into $GL(2,\Z)$ either it amounts to prove that 
the equation corresponding to $\det(T)=-1$, namely 
\begin{equation}\label{dioph3}
q^2x^2+2kq^2vxy-v y^2= -1
\end{equation}
has not integral solutions. 
If $v$ is divisible by a prime number $p$ 
which is congruent to $3$ mod $4$ then 
the reduction mod $p$ of the equation (\ref{dioph3}) 
reads $q^2x^2\equiv -1({\rm mod}\, p)$. 
But $-1$ is not a quadratic residue mod $p$ when 
$p$ is as above. The same argument works when $v$ is divisible 
by $4$. This shows that the matrices $A$ and $B$ are not conjugate 
in $GL(2,\Z)$.

Eventually, consider the conjugacy between $A$ and 
$B^{-1}=\left(\begin{array}{cc}
1+4k^2q^2v & -2k \\
-2kq^2v & 1 \\
\end{array}
\right)$. The linear equations $VA=B^{-1}V$ has the solutions 
$V(x,y)=\left(\begin{array}{cc}
x & y \\
-v y + 2kq^2v x & - q^2x\\
\end{array}
\right)$. The condition $\det(V)=\pm 1$ is actually 
the same couple of equations 
\begin{equation}
q^2x^2 +2kq^2v xy -v y^2 = \mp 1
\end{equation}
studied above. Therefore $A$ and $B^{-1}$ are not conjugate within $GL(2,\Z)$, as claimed. 
 \end{proof}

\begin{remark}
Another pair of matrices satisfying the claims from Proposition 
\ref{localeq} was obtained 
by Stebe in \cite{S}, as follows: 
\begin{equation}
A=\left(\begin{array}{cc}
188 & 275 \\
121 & 177 \\
\end{array}
\right), \,\, B=\left(\begin{array}{cc}
188 & 11 \\
3025 & 177 \\
\end{array}
\right) 
\end{equation}
\end{remark}

\begin{remark}
We have $\varphi(A)-\varphi(B)=2k(q^2-1)(v+1)$. 
Since $v$ is  positive all pairs $(A,B)$ furnished by Proposition \ref{localeq} have $\varphi(A)\neq \varphi(B)$ and hence the manifolds $M_A$ and $M_B$  
can be distinguished by their Reshetikhin-Turaev 
invariants. 
\end{remark}

\begin{remark}
There exist always rational solutions to the Diophantine equation above 
and thus the matrices $A$ and $B$ are always conjugate within $SL(2,\Q)$. 
This implies that the associated 3-manifolds $M_A$ and $M_B$ are 
commensurable.  
\end{remark}

\section{Proof of Theorem \ref{RTagree}}

\subsection{Outline of the proof}
According to the previous sections it suffices to show that 
here exist pairs of Anosov matrices $A$ and $B$ such that 
their images $A$ and $B$ are conjugate within $SL(2,\Z/m\Z)$ 
for every $m$, neither $A$ and $B$, nor $A$ and $B^{-1}$ are conjugate 
in $GL(2,\Z)$ (thus satisfying the claims of Proposition \ref{localeq})  
and moreover $\varphi(A)=\varphi(B)$.   
The key ingredient is to reformulate these requirements as follows:  

\begin{proposition}\label{reciprocal}
There exist infinitely many pairs of Anosov matrices $A$ and $B$ such that: 
\begin{enumerate}
\item $A$ and $B$ are conjugate in $SL(2,\Z/m\Z)$ 
for every $m$;
\item $A$ and $B$ are reciprocal, namely they are 
conjugate in $SL(2,\Z)$ to $A^{-1}$ and $B^{-1}$ respectively; 
\item $A$ and $B$ are inert, namely they are 
conjugate in $SL(2,\Z)$ to $wAw^{-1}$ and $wBw^{-1}$ respectively, 
where $w= \left(\begin{array}{cc}
1 & 0 \\
0  & -1\\
\end{array}\right)$;
\item $A$ and $B$ are not conjugate in $SL(2,\Z)$.    
\end{enumerate}
\end{proposition}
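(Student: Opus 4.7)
The plan is to use the Latimer--MacDuffee--Taussky--Todd correspondence to translate Proposition \ref{reciprocal} into a statement about ideal classes of real quadratic orders, and then produce an infinite family by genus theory. For a fixed trace $t>2$, this correspondence identifies $GL(2,\Z)$-conjugacy classes of matrices in $SL(2,\Z)$ of trace $t$ with the ideal class group of the real quadratic order $\O_t=\Z[(t+\sqrt{t^2-4})/2]$; refining by a choice of orientation on the representing $\Z$-basis yields a bijection, at the level of $SL(2,\Z)$-conjugacy, with the \emph{narrow} ideal class group $\mathrm{Cl}^+(\O_t)$. Under this dictionary the involution $A\mapsto A^{-1}$ corresponds to inversion in $\mathrm{Cl}^+(\O_t)$, the involution $A\mapsto wAw^{-1}$ corresponds to the orientation flip (trivial on every class of trace $t$ precisely when $\O_t$ admits a unit of norm $-1$), and Gauss's genus map on ideal classes corresponds to conjugacy in $SL(2,\Z/m\Z)$ for all $m$.

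With this dictionary, condition (2) becomes ``$[\mathfrak{a}]$ and $[\mathfrak{b}]$ are $2$-torsion in $\mathrm{Cl}^+(\O_t)$'', condition (3) becomes ``$\O_t$ has a unit of negative norm'' (making (3) automatic for every matrix of trace $t$), condition (1) becomes ``$[\mathfrak{a}][\mathfrak{b}]^{-1}\in \mathrm{Cl}^+(\O_t)^2$'' (the principal-genus condition), and (4) is simply $[\mathfrak{a}]\neq [\mathfrak{b}]$. All four are thus simultaneously realisable at trace $t$ as soon as $\O_t$ possesses a unit of norm $-1$ and $\mathrm{Cl}^+(\O_t)$ contains an element of order exactly $4$: the square of such a class is a non-trivial $2$-torsion element in the principal genus, and taking $[\mathfrak{a}]=1$ and $[\mathfrak{b}]$ equal to that square furnishes the required pair. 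Translating back via Latimer--MacDuffee and choosing the $\Z$-basis compatibly with the involutions produces explicit matrices, which in a symmetric-matrix normalisation reproduce the shape of the four worked examples in Theorem \ref{RTagree} (for which reciprocity $sAs^{-1}=A^{-1}$ is automatic).

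The main obstacle is producing an \emph{infinite} family of such traces $t$. Genus theory lower-bounds the $2$-rank of $\mathrm{Cl}^+(\O_t)$ in terms of the number of distinct prime divisors of the discriminant $t^2-4$, so $2$-rank $\geq 2$ is easy to arrange; securing the norm-$(-1)$ unit is controlled by classical sign criteria on the prime divisors of the discriminant. The truly delicate point is to upgrade $2$-rank $\geq 2$ into the existence of a class of order exactly $4$ (equivalently, $\mathrm{Cl}^+(\O_t)^2\cap\mathrm{Cl}^+(\O_t)[2]\neq\{1\}$), which is a R\'edei-type $4$-rank condition on the discriminant. I would address this either by exhibiting an explicit polynomial family of traces $t$ for which the four worked examples of Theorem \ref{RTagree} serve as a template, or by invoking R\'edei's reciprocity laws together with density results on real quadratic discriminants with prescribed $4$-rank and fundamental-unit signature to extract infinitely many suitable $t$.
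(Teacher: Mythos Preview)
Your proposal is correct and follows essentially the same route as the paper's own proof: translate the four conditions via the Latimer--MacDuffee/binary-forms dictionary into (ambiguous class, principal genus, norm-$(-1)$ unit, distinct classes), reduce to ``$D\in\mathcal{D}^-$ with positive $4$-rank of $C_D$'', and then invoke R\'edei--Reichardt plus a density result. The paper makes the last step concrete by taking $D=u^2+4$ (so the negative Pell equation $X^2-DY^2=-4$ is trivially solvable), computing the R\'edei matrix directly, and citing Fouvry--Kl\"uners for the positive density of special discriminants with $4$-rank equal to $1$ and $8$-rank $0$; you would want to supply a comparable reference to close the argument.
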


\vspace{0.2cm}
{\em Proof of Theorem \ref{RTagree} assuming Proposition \ref{reciprocal}}. 
If $A$ were conjugate to $B$ in $GL(2,\Z)$, namely 
$A=sBs^{-1}$, with $s\in GL(2,\Z)$ then $\det(s)=-1$ and 
$wAw^{-1}=ws B (ws)^{-1}$, with $\det(ws)=1$.   
Since $A$ is inert, this would imply that 
$A$ is conjugate in $SL(2,\Z)$ to $B$, which contradicts our assumption. 
Since $A$ and $B$ are reciprocal $A$ cannot be conjugate to $B^{-1}$ 
in $GL(2,\Z)$ either. 

Eventually recall that $\varphi$ is constructed from $\Phi_R$ in such a way 
that it becomes a quasi-homomorphism $\varphi: SL(2,\Z)\to \Z$. 
Namely, the following hold (see \cite{Me}) :

\begin{equation}
\varphi(CAC^{-1})=\varphi(A), \; {\rm for } \; C\in SL(2,\Z),
\end{equation}

\begin{equation}
\phi(A^{-1})=-\varphi(A)
\end{equation}

In particular, if $A$ and $B$ are reciprocal, then 
$\varphi(A)=\varphi(B)=0$ and this actually holds for any quasi-homomorphism 
$\varphi$. This will settle Theorem \ref{RTagree}.

 \subsection{Proof of Proposition \ref{reciprocal}}
Reciprocal (conjugacy) classes in $SL(2,\Z)$ 
were recently discussed by Sarnak in \cite{Sarnak}. 
Let $A^{\bot}$ denote the transpose of $A$. 
Since the transpose $(A^{-1})^{\bot}= \left(\begin{array}{cc}
0   & 1 \\
-1  & 0\\
\end{array}\right) A \left(\begin{array}{cc}
0   & 1 \\
-1  & 0\\
\end{array}\right)$, it follows that $A$ is reciprocal if and only if 
it is conjugate to its transpose $A^{\bot}$ (see also \cite{Sarnak}, p.218). 
Recall that $A$ is ambiguous if $A$ is conjugate within $SL(2,\Z)$ 
to $w^{-1}A^{-1}w$. 

We say that $A$ and $B$ are in the same genus if their images  
are conjugate within $SL(2,\Z/m\Z)$, for every $m$.  
Our aim is to find reciprocal and inert conjugacy classes 
in the same genus. 

Let $D$ be an odd (square-free) fundamental discriminant. 
Following Gauss (see \cite{BS}) there are 
$2^{\sigma(D)-1}$ genera of primitive integral binary 
forms, where $\sigma(D)$ is the number of distinct prime divisors of $D$.  

Denote by $\mathcal D^-$ the set of those D for which the negative 
Pell equation 

\begin{equation}
X^2- DY^2= -4
\end{equation}
has integral solutions. It is known that $D\in \mathcal D^-$ 
if and only if the narrow class group $C_D$ coincides with the 
class group $Cl_D$ of $\Q(\sqrt{D})$ (see \cite{St}, Lemma 2.1). 

Recall that the 4-rank of an abelian group $C$ is the rank of 
$C^2/C^4$ which counts the number of distinct cyclic factors of 
order 4. 

\begin{lemma}
Every $D\in \mathcal D^-$ such that 
the 4-rank of $C_D$ is non-trivial gives raise to a pair 
of non-conjugate reciprocal matrices in the same principal genus. 
\end{lemma}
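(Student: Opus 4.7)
The plan is to apply the Latimer--MacDuffee--Taussky theorem together with Gauss's genus theory, reducing the statement to an arithmetic count inside $C_D$.

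First, I fix an integer $t$ with $t^2-4=D$ and parametrize conjugacy classes of trace $t$ matrices in $SL(2,\Z)$ by ideal classes in $\Z[(t+\sqrt{D})/2]$. The Latimer--MacDuffee--Taussky correspondence recalled in the introduction provides a bijection between $Cl_D$ and $GL(2,\Z)$-conjugacy classes of matrices with characteristic polynomial $X^2-tX+1$. Under the hypothesis $D\in\mathcal D^-$, the recalled Stevenhagen lemma gives $C_D=Cl_D$, and the discrepancy between $GL$- and $SL$-conjugacy on such a class is controlled by the existence in the corresponding order of an element of norm $-1$, that is, by the solvability of the negative Pell equation $X^2-DY^2=-4$. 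Consequently the correspondence descends to a bijection between $C_D$ and $SL(2,\Z)$-conjugacy classes of trace $t$ Anosov matrices.

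Second, I translate the two conditions into subsets of $C_D$. Since $A\mapsto A^{-1}$ transports to the ideal-theoretic involution $[I]\mapsto [I]^{-1}$, reciprocal classes correspond exactly to the 2-torsion subgroup $C_D[2]$. By Gauss's genus theory, the principal genus of binary quadratic forms of discriminant $D$ corresponds to $C_D^2\subseteq C_D$, and two classes lie in the same genus iff they differ by an element of $C_D^2$; a standard local--global analysis identifies this Gaussian genus invariant with the invariant attached to local conjugacy classes in $SL(2,\Z/m\Z)$ that is in force elsewhere in the paper. Therefore the reciprocal classes in the principal genus are in bijection with $C_D[2]\cap C_D^2$.

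Third, I compute this intersection. An element $y\in C_D^2$ satisfies $y^2=1$ iff $y=x^2$ for some $x\in C_D$ with $x^4=1$, so $C_D[2]\cap C_D^2$ is exactly the image of $C_D[4]$ under squaring, of cardinality $|C_D[4]/C_D[2]|=2^{r_4(C_D)}$, where $r_4$ denotes the 4-rank. A nontrivial 4-rank thus yields at least two distinct reciprocal conjugacy classes in the principal genus, namely the identity class together with $x^2$ for some $x$ of order exactly $4$, which is what the lemma asserts. The main obstacle I expect is the careful bookkeeping between $SL(2,\Z)$- and $GL(2,\Z)$-conjugacy in the Latimer--MacDuffee--Taussky correspondence and the matching of the local-conjugacy notion of ``same genus'' with the classical Gaussian one; the hypothesis $D\in\mathcal D^-$ is precisely what makes both subtleties collapse.
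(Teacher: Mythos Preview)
Your argument is correct and follows essentially the same strategy as the paper: identify the principal genus with $C_D^2$ via Gauss, identify reciprocal classes with the $2$-torsion $C_D[2]$, and observe that a nontrivial $4$-rank forces $C_D[2]\cap C_D^2$ to contain a nonidentity element. The one organizational difference is in how you reach ``reciprocal $\Leftrightarrow$ $2$-torsion'': the paper first identifies \emph{ambiguous} classes with $C_D[2]$ and then invokes Sarnak's result that for $D\in\mathcal D^-$ every ambiguous class is reciprocal (and every class is inert), whereas you argue directly through the Latimer--MacDuffee--Taussky correspondence, using $D\in\mathcal D^-$ to collapse the narrow/wide and $SL/GL$ distinctions so that $A\mapsto A^{-1}$ becomes $[I]\mapsto[\bar I]=[I]^{-1}$ on the nose. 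Both routes are valid and rely on $D\in\mathcal D^-$ in the same essential way; your explicit count $|C_D[2]\cap C_D^2|=2^{r_4(C_D)}$ is a small refinement over the paper's ``at least two''. One minor overstatement: the identification of the Gaussian form-genus with the local $SL(2,\Z/m\Z)$-conjugacy genus is a general fact and does not itself require $D\in\mathcal D^-$; that hypothesis is only needed for the $SL$/$GL$ bookkeeping you flag first.
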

\begin{proof}
According to Gauss (see \cite{BS}) the group of genera 
is $C_D/C_D^2$. The classes in the kernel of the projection 
$C_d\to C_d/C_D^2$ form  the principal genus. 
The set of ambiguous classes is identified with 
the kernel of the  square homomorphism $\delta: C_D\to C_D$ 
given by $\delta(x)=x^2$. 
Therefore the elements of order $2^n$ in $C_D$, with $n\geq 2$ 
are ambiguous classes in the principal genus. 

When $D\in \mathcal D^-$ it is known that every class is inert and 
every ambiguous class is reciprocal and viceversa (see \cite{Sarnak}, p.214). 
In particular, if the 4-rank of $C_D$ is positive then there are at least two 
inert and reciprocal classes in the principal genus. They are non-conjugate 
as they are distinct classes in $C_D$.  
\end{proof}

There exists a simple method developped by R\'edei and Reichardt 
(see \cite{RR}) to find the 4-rank of the narrow class group $C_D$. 
Let $D=p_1p_2\cdots p_n$ be the decomposition in odd prime numbers 
of $D$.  The R\'edei matrix $M_D$ is the $n$-by-$n$ matrix  over 
$\Z/2\Z$ whose  entries $a_{ij}$ are: 
\begin{equation}
a_{ij}=\left\{ \begin{array}{ll}
1, & {\rm if }\; i\neq j, \; {\rm and} \; \left(\frac{p_i}{p_j}\right)=-1\\
0, & {\rm if }\; i\neq j, \; {\rm and} \; \left(\frac{p_i}{p_j}\right)=1\\
\end{array}\right.  
\end{equation}
\begin{equation}
a_{jj}=\sum_{i\neq j, 1\leq i\leq n}a_{ij}
\end{equation}
Here $\left(\frac{p}{q}\right)\in\{-1,1\}$ is the Legendre symbol, 
equal to $1$ if and only if $p$ is a quadratic residue mod $q$. 
Eventually, following (\cite{RR})  
the 4-rank of $C_D$ is given by $\sigma(D)-1-{\rm rank}_{\Z/2\Z} M_D$. 

We will consider from now on $D$ of the form 
$D=u^2+4$, so that the negative Pell equation has obvious solutions 
$X=u$, $Y=1$.  We seek for those $D$ which are odd square-free and 
such that the associated R\'edei matrix is identically zero. 
If $D$ has at least two prime factors then the 4-rank of $C_D$ is non-trivial. 
In this case it is easy to find explicit matrices $A$ and $B$   
corresponding to ambiguous, inert and reciprocal pairs of classes 
in the principal genus. The trace $t$ of $A$ will be 
\begin{equation}
t=D-2
\end{equation}
so that it verifies 
\begin{equation}
t^2-Du^2=4
\end{equation}
For each positive integral solutions
$(a,b)$ of the equation $4a^2+b^2=D$ 
we have associated the classes of binary forms $(a,b,-a)$, which correspond 
to the symmetric matrices:
\begin{equation}
A_{a,b}=\left( \begin{array}{cc}
\frac{1}{2}\left(t-ub\right)  & a u \\
au & \frac{1}{2}\left(t+ub\right)\\
\end{array}
\right)
\end{equation}
These are obviously reciprocal classes in the principal 
genus $C_D^2$ of $C_D$. The examples in Theorem \ref{RTagree} 
arise when choosing $u\in\{21, 51,53,55\}$ for which 
$\sigma(D)=2$ and $M_D=0$. 

Eventually, there are infinitely many $D\in\mathcal D^-$ 
for which the 4-rank of $C_D$ is positive. 
Let $\mathcal D$ denote the set of special discriminants, namely 
the set of those $D$ whose prime factorization has only 
distinct odd primes of the form $p\equiv 1 ({\rm mod}\; 4)$ 
and possibly $8$.  Then in (\cite{FouK}, Theorem 2) the authors state
that the subset of those $D\in\mathcal D$ for which the 
4-rank of $C_D$ equals 1 and the 8-rank vanishes (and 
hence $D\in\mathcal D^-$) has positive density within 
the set $\mathcal D$. In particular this set is infinite.

\begin{remark}
We conjecture that the number of distinct cyclic factors 
of order  $2^m\geq 4$ of the class group $C_D$, where  
$D$ runs over the odd square-free $D$ of the form $n^2+4$ 
is unbounded. 
\end{remark}

\section{Proof of Theorem \ref{comm}}

\subsection{Abelian invariants}
We will consider the $U(1)$ gauge theory as defined in 
\cite{F1,F2,Go,MOO} and  then generalized in \cite{Del1,Del2}.   
One chooses a root of unity $q$  of order $k$ for odd $k$ 
and of order $2k$ for even $k$. Then in \cite{MOO} there is defined 
the invariant $Z_k(M,q)$ for 3-manifolds $M$ as follows.
Set $L$ be a framed link  with $n$ components in $S^3$ such that the 3-manifold $M$ 
is obtained by Dehn surgery on $L$. Let $A_L$ 
denote the linking matrix of $L$. 
We define then after (\cite{MOO}, (1.1)) the {\em MOO invariant} 
of the 3-manifold $M$ as being: 
\begin{equation}\label{inv}
Z_k(M,q)=\left(\frac{G_k(q)}{|G_k(q)|}\right)^{-\sigma(A_L)}
|G_k(q)|^{-n}\sum_{x\in (\Z/k\Z)^n} q^{^TxA_Lx}
\end{equation}
where $\sigma$ denotes the signature of the matrix and  
the Gaussian sums are given by: 
\begin{equation}
G_k(q)=\sum_{h\in \Z/k\Z} q^{h^2}
\end{equation}
Notice that for even $k$ the value of  $q^{^TxA_Lx}$ is defined by 
taking arbitrary lifts $\tilde{x}\in (\Z/2k\Z)^n$ and setting 
$q^{^TxA_Lx}=q^{^T\tilde xA_L\tilde x}$, which is independent on 
the choice of the lifts, since $A$ is symmetric.  

These invariants where further extended by Deloup in \cite{Del1}
by making use of general quadratic forms and finally extended to TQFTs 
in \cite{Del2}. These TQFT correspond to suitable modular tensor 
categories, which are related to  the Drinfeld double 
$D(\Z/k\Z)$ of the finite group $\Z/k\Z$ and to the geometric $U(1)$ 
Chern-Simons gauge theories. A more precise statement is given in 
(\cite{Del1}, Appendix A) where the invariants $Z_k$ 
and their generalizations are identified with the 
Reshetikhin-Turaev invariants associated 
to a modular category ${\mathcal A}$ coming from an abelian group, which 
is  described by Turaev in (\cite{Tu}, p.29). 
 
The Turaev-Viro invariants invariants $TV_{\mathcal A}$ are therefore 
the absolute values of $|Z_k(M,q)|$. 
The main result of this section is the following: 
\begin{proposition}\label{equalMOO}
Let $M_A$ and $M_B$ be SOL  torus bundles with the same 
absolute value MOO invariants $|Z_k(M,q)|$, for all $k$. 
Then,  either: 
\begin{equation}
{\rm Tr}(A)={\rm Tr}(B) 
\end{equation}
or else: 
\begin{equation}\label{sum4}
{\rm Tr}(A)+{\rm Tr}(B)=4
\end{equation}
Consequently those torus bundles having the same abelian 
Turaev-Viro invariants as $M_A$ fall into two 
commensurability classes. 
\end{proposition}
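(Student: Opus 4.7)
The plan is to extract from the family $\{|Z_k(M_A,q)|\}_{k,q}$ the order $|{\rm Tr}(A)-2|$ of the torsion subgroup of $H_1(M_A;\Z)$, and from this to recover ${\rm Tr}(A)$ up to the ambiguity $t\mapsto 4-t$. Once this dichotomy is in hand, in each case the matrices $A$ and $B$ share the same characteristic polynomial $X^2-tX+1$, so by Barbot's strong-commensurability criterion recalled in the Introduction, the groups $\Gamma_A$ and $\Gamma_B$ are strongly commensurable within each case, giving the stated finiteness of commensurability classes.

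For the torus bundle $M_A$ with Anosov monodromy we have $b_1(M_A)=1$ and $H_1(M_A;\Z)\cong \Z\oplus T_A$, where $T_A={\rm coker}(A-I)$ has order $|\det(A-I)|=|{\rm Tr}(A)-2|$. A direct Fourier-analytic evaluation of the Gauss sum in (\ref{inv}) (for odd $k$, after interchanging the order of summation and using $|G_k(q)|^2=k$) yields the classical identity of abelian TQFT
\begin{equation*}
|Z_k(M,q)|^2=|H_1(M;\Z/k\Z)|,
\end{equation*}
which is essentially contained in \cite{MOO,Del1,Del2}. Applied to $M_A$ and combined with the universal coefficient isomorphism $H_1(M_A;\Z/k\Z)\cong \Z/k\Z\oplus T_A/kT_A$, this gives
\begin{equation*}
|Z_k(M_A,q)|^2=k\cdot |T_A/kT_A|,
\end{equation*}
so that $|T_A|=|Z_k(M_A,q)|^2/k$ for every $k$ divisible by the exponent of $T_A$. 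Hence the entire sequence of absolute MOO invariants reconstructs $|T_A|$.

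The hypothesis therefore forces $|T_A|=|T_B|$, i.e. $|{\rm Tr}(A)-2|=|{\rm Tr}(B)-2|$, whence ${\rm Tr}(A)={\rm Tr}(B)$ or ${\rm Tr}(A)+{\rm Tr}(B)=4$. In either case $A$ and $B$ have the same trace and determinant $1$, and since they are Anosov their common characteristic polynomial has distinct real roots; thus $A$ and $B$ are $GL(2,\Q)$-conjugate to a common diagonal matrix, and hence to each other. By Barbot's theorem $\Gamma_A$ and $\Gamma_B$ are then strongly commensurable, so the torus bundles sharing all absolute abelian MOO invariants with $M_A$ split into at most two strong commensurability classes, one for each of the two values ${\rm Tr}(A)$ and $4-{\rm Tr}(A)$. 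The main technical obstacle is the squared-modulus identity $|Z_k(M,q)|^2=|H_1(M;\Z/k\Z)|$: although essentially classical, its cleanest self-contained derivation requires careful bookkeeping of Gauss-sum phases and a computation of the quadratic Gauss sum attached to the linking matrix $A_L$ modulo $k$, obtained by bringing $A_L$ into a block-diagonal form over $\Z/k\Z$ and reducing to a product of one-variable Gauss sums of known modulus.
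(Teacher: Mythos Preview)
Your overall strategy coincides with the paper's: extract $|{\rm Tr}(A)-2|$ from the family $\{|Z_k(M_A,q)|\}$ via the identity relating $|Z_k|^2$ to the order of $H^1(M;\Z/k\Z)$, and then invoke the commensurability criterion. The deduction $|{\rm Tr}(A)-2|=|{\rm Tr}(B)-2|\Rightarrow$ (trace equality or sum $4$) and the subsequent $GL(2,\Q)$-conjugacy are exactly as in the paper.

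There is, however, a genuine gap at the key analytic step. The identity $|Z_k(M,q)|^2=|H_1(M;\Z/k\Z)|$ that you invoke is \emph{only} valid unconditionally for odd $k$; this is precisely the content of \cite[Thm.~3.2]{MOO}. For even $k$ one has instead
\[
|Z_k(M,q)|^2=\begin{cases}|H^1(M;\Z/k\Z)|,& \text{if }\alpha\cup\alpha\cup\alpha=0\ \text{for all }\alpha\in H^1(M;\Z/k\Z),\\ 0,&\text{otherwise,}\end{cases}
\]
so the invariant can vanish. Your argument requires the identity for some $k$ divisible by the exponent of $T_A$; when ${\rm Tr}(A)$ is even this exponent is even, and you cannot avoid even $k$. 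From odd $k$ alone you recover only the odd part of $|T_A|$.

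The paper closes this gap by showing (using \cite[Cor.~5.3]{MOO}) that any $k=2^r$ with $r$ strictly larger than the $2$-adic valuation of $|T_A|$ is automatically ``good'' for $M_A$, since $T_A$ then has no element of order $2^r$; hence the identity does hold for such $k$, and one can read off the $2$-part of $|{\rm Tr}(A)-2|$ as well. You should either insert this argument or restrict your claim to the odd part and supply a separate treatment of the prime $2$.
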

\begin{proof}
We have first the following explicit computation 
of the MOO invariants from \cite{MOO}:  
\begin{lemma}\label{MOO}
If $k$ is odd then we have: 
\begin{equation}
|Z_k(M,q)|=|H^1(M,\Z/k\Z)|^{1/2}
\end{equation}
If $k$ is even then: 
\begin{equation}
|Z_k(M,q)|=\left\{\begin{array}{ll}
|H^1(M,\Z/k\Z)|^{1/2}, & {\rm if} \: 
\alpha\cup\alpha\cup \alpha=0, {\rm for \, every}\, \alpha\in H^1(M,\Z/k\Z) \\ 
0, & {\rm otherwise} \\
\end{array}\right.
\end{equation}
\end{lemma}
\begin{proof}
See (\cite{MOO}, Thm.3.2).
\end{proof}

Further the cohomology of SOL torus bundles is given by: 
\begin{lemma}\label{kern}
If $M=M_A$ with $A\in SL(2,\Z)$ hyperbolic then 
\begin{equation}
H^1(M_A,\Z/k\Z)\cong \Z/k\Z\oplus \ker \nu_k(A^T-\mathbf 1) 
\end{equation}
where $A^T$ denotes the transposed of the matrix $A$. 
\end{lemma} 
\begin{proof}
By the Universal Coefficient Theorem 
$H^1(M_A,\Z/k\Z)\cong \Z/k\Z)={\rm Hom}(H_1(M_A),\Z/k\Z)$. 
Since $A$ is hyperbolic $H_1(M_A)=\Z\oplus {\rm Tors}(H_1(M_A))$.  
The torsion part can be computed by abelianizing $\Gamma_A$ and we find   
${\rm Tors}(H_1(M_A))=\Z^2/(A-{\bf 1})(\Z^2)$, which is a finite 
abelian group 
of order $|\det(A-{\mathbf 1})|=|{\rm Tr}(A)-2|$. 

Then ${\rm Hom}({\rm Tors}(H_1(M_A)), \Z/k\Z)$ is 
naturally identified with $\ker (A-{\mathbf 1})^*_k$, where \\
$(A-{\mathbf 1})^*_k: {\rm Hom}(\Z^2,\Z/k\Z)\to {\rm Hom}(\Z^2,\Z/k\Z)$ 
is the linear map given by 
$(A-{\mathbf 1})^*_k(f)=f\circ(A-{\mathbf 1})$, for  
$f\in {\rm Hom}(\Z^2,\Z/k\Z)$. 
We have a (non-canonical) isomorphism $(\Z/k\Z)^2\to 
{\rm Hom}(\Z^2,\Z/k\Z)$ which sends $(a,b)\in (\Z/k\Z)^2$ to the  
homomorphism $f_{a,b}$ satisfying 
$\left(f_{a,b}\left(\begin{array}{c}
1 \\
0
\end{array}\right),f_{a,b}\left(\begin{array}{c}
0 \\
1
\end{array}\right)\right)=(a,b) \in (\Z/k\Z)^2$. 
Then $f_{a,b}\in \ker (A-{\mathbf 1})^*_k$ if and only if 
$(a,b)\in \ker \nu_k(A^T-{\mathbf 1})$. This proves the claim. 
\end{proof}
Consider now two SOL manifolds $M_A$ and $M_B$ having the same 
absolute value MOO invariants. 
If the  MOO invariants as well as their generalizations from 
\cite{Del1} were  the same for the two manifolds 
then the result would be a simple consequence 
of the main theorem from \cite{DG}. In fact these invariants determine 
the linking pairing of the 3-manifold and in particular 
the torsion group ${\rm Tors}(H_1(M))$. 

The case where we know that the absolute value of the MOO invariants 
agree is  only slightly more complicated. First, when $k$ is odd 
Lemma \ref{MOO} and Lemma \ref{kern} imply that 
\begin{equation}\label{constr}
|\ker \nu_k(A^T-1)|=|\ker \nu_k(B^T-1)|
\end{equation}  

In order to compute the orders of the kernels above we have to recall 
some standard facts concerning the normal forms of integral matrices. 
Let $C:\Z^n\to \Z^n$  be a non-singular linear map 
$C:\Z^n\to \Z^n$. Then there exists 
a (unique) collection of positive integers $r_1,r_2,\ldots,r_n$, 
called the invariant factors of $C$ 
with $r_j$ dividing $r_{j+1}$ (when $j\leq n-1$) such that 
$C=V D W$, where $V,W\in GL(n,\Z)$ are invertible integral matrices 
and $D$ is diagonal with entries $r_1,r_2,\ldots,r_n$. 
Moreover $|\det(C)|=r_1r_2\cdots r_n$. 
This is the so-called Smith normal form (see \cite{New}, II.15).

This normal form is particularly useful if one seeks for counting the solutions 
of the congruences system 
$C(x)\equiv 0 ({\rm mod}\: k)$. 
By above this is equivalent to the system of congruences 
$r_jx_j\equiv 0 ({\rm mod}\: k)$, for $1\leq j\leq n$. 
Each congruence  above gives  ${\rm g.c.d.}(r_j, k)$ distinct 
solutions $x_j$ mod $k$, so 
that the total number of solutions of the system  is
$\prod_{j=1}^n {\rm g.c.d.}(r_j, k)$. 

Notice that the invariant factors for a 2-by-2 matrix  
$A=\left(\begin{array}{cc}
a & b \\
c & d \\
\end{array}\right)$ are simply 
$r_1(A)={\rm g.c.d.}(a,b,c,d)$ and $r_2(A)=\det(A)/r_1(A)$.

Now, for fixed $C$ and $k$ of the form $k=p^r$,  with prime $p$, if we choose 
$r$ large enough such that 
$r\geq m_{j,p}(C)$, where $r_j=p^{m_{j,p}(C)}s_j$, with 
${\rm g.c.d.}(p,s_j)=1$ then  
the previous discussion shows that  
\begin{equation}
|\ker \nu_k(C)|= {\rm g.c.d.}(|\det(C)|, k)
\end{equation}
We will apply this formula to $C=A-{\mathbf 1}$ and respectively 
$C=B-{\mathbf 1}$, where $k=p^r$ for odd prime $p$ and $r$ 
is chosen  large enough  such that 
\begin{equation}\label{condition}
r\geq \max(m_{j,p}(A-{\mathbf 1}), m_{j,p}(B-{\mathbf 1})) 
\end{equation}
Then the relations above imply that 
\begin{equation}
{\rm g.c.d.}({\rm Tr}(A)-2, p^r)=
{\rm g.c.d.}({\rm Tr}(B)-2, p^r)
\end{equation}
for every odd prime $p$ and $r$ large enough. Therefore 
the numbers $|{\rm Tr}(A)-2|$ and $|{\rm Tr}(B)-2|$ have the same 
odd divisors. 

Let now call  the even number $k$ to be {\em good for} $M$ if we have 
$\alpha\cup\alpha\cup \alpha=0$, for every 
$\alpha\in H^1(M,\Z/k\Z)$. Lemma \ref{MOO} shows that 
$k$ is good if and only if $|Z_k(M,q)|\neq 0$. 
On the other hand in (\cite{MOO}, Cor. 5.3) one founds the following 
explicit criterion.  The number $k$ is not good for $M$, i.e. 
$Z_k(M,q)= 0$, if and only if  there exists 
$x\in {\rm Tors}(H_1(M))$ of order $2^m$ such that 
$L_M(x,x)=\frac{c}{2^m}$,  where $k=2^mb$, with odd $b$,  $L_M$ denotes 
the linking pairing 
$L_M: {\rm Tors}(H_1(M))\times {\rm Tors}(H_1(M))\to \Q/\Z$, 
and $c$ is odd.

Now, if $M_A$ and $M_B$ have the same  absolute value of MOO 
invariants then $k$ is good for $M_A$ if and only if $k$ is good for $M_B$. 
On the other hand, we know that $|{\rm Tr}(A)-2|=2^{m_{A}}s$ and 
$|{\rm Tr}(A)-2|=2^{m_B}s$, with odd $s$. 
Observe that any $k$ of the form $k=2^{r}$, with $r \geq m_A+1$ is good for 
$M_A$ since the torsion ${\rm Tors}(H_1(M_A))$ has no 
elements of order $2^r$. In particular if $r\geq \max(m_A,m_B)+1$ 
then $2^r$ is good for both $M_A$ and $M_B$.

Choose now  $p=2$, $k=2^r$ with $r$ large enough as in (\ref{condition}) 
and such that $2^r$ is good for both $M_A$ and $M_B$.  
Then the equality of MOO invariants of $M_A$ and $M_B$ implies 
\begin{equation}
|\ker \nu_k(A^T-{\mathbf 1})|=|\ker \nu_k(B^T-{\mathbf 1})|
\end{equation}
which, by above, is equivalent to the following:  
\begin{equation}
{\rm g.c.d.}({\rm Tr}(A)-2, 2^r)=
{\rm g.c.d.}({\rm Tr}(B)-2, 2^r)
\end{equation}
Thus $m_A=m_B$ and this completes the proof of the fact that:  
\begin{equation}
|{\rm Tr}(A)-2|= 
|{\rm Tr}(B)-2|
\end{equation}
If ${\rm Tr}(A)={\rm Tr}(B)$ then 
$A$ and $B$ have the same trace and the same 
determinant and thus the equation 
$XA=BX$ has solutions in $GL(2,\Q)$, so that $M_A$ and $M_B$ 
are (strongly) commensurable. 

In fact, recall that Barbot and later Bridson and Gersten  
(\cite{Bar,BG}) proved the following: 
\begin{lemma}\label{commensur}
The groups $\Gamma_A$ and 
$\Gamma_B$ are commensurable if and only if the quotient 
of their discriminants $D_A/D_B$ is the square of an rational.  
Here the discriminant of $A$ is $D_A={\rm Tr}(A)^2-4\det(A)$. Moreover, this is 
equivalent to the fact that $A^p$ and $B^q$ are conjugate within $GL(2,\Q)$, 
for some $p,q\in\Z$. 
\end{lemma}

Further if ${\rm Tr}(A)+{\rm Tr}(B)=4$ we have again 
only one (strong) commensurability class allowed for $B$. 
Thus the torus bundles as in the statement of the Proposition 
fall into two commensurability classes. 
\end{proof}

We will give now several examples to show that all abelian 
invariants (of Reshetikhin-Turaev type, not only their absolute values) 
fail to distinguish the two distinct commensurability classes above. 

\begin{proposition}
Set 
\begin{equation}
A=\left(\begin{array}{cc}
1 & n \\
1 & n+1 \\
\end{array}
\right), \,\, B=\left(\begin{array}{cc}
1-2n & n \\
-1-2n & n+1 \\
\end{array}
\right) \:\:\; n\in \Z_+
\end{equation}
The manifolds $M_A$ and $M_B$ have the same quantum abelian 
invariants although ${\rm Tr}(A)+{\rm Tr}(B)=4$ and ${\rm Tr}(A)\neq {\rm Tr}(B)$, if $n\geq 1$ and 
$n\neq 4$. 
In particular the trace is not detected by the quantum abelian invariants 
of torus bundles. Moreover, if $\frac{n+4}{n-4}\not\in \Q^2$ then 
$M_A$ and $M_B$ (equivalently $\Gamma_A$ and $\Gamma_B$) are not commensurable. 
\end{proposition}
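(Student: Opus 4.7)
The plan is a direct verification of each clause of the proposition. First I would record the elementary data: direct inspection gives ${\rm Tr}(A)=n+2$, ${\rm Tr}(B)=2-n$ and $\det A=\det B=1$, so $A,B\in SL(2,\Z)$, ${\rm Tr}(A)+{\rm Tr}(B)=4$ and ${\rm Tr}(A)\neq{\rm Tr}(B)$ whenever $n\neq 0$; the case $n=4$ is excluded because then ${\rm Tr}(B)=-2$ and $B$ is parabolic. The discriminants work out to $D_A={\rm Tr}(A)^2-4=n(n+4)$ and $D_B={\rm Tr}(B)^2-4=n(n-4)$, hence $D_A/D_B=(n+4)/(n-4)$. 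Barbot's criterion (Lemma~\ref{commensur}) then yields the non-commensurability claim at once: $\Gamma_A$ and $\Gamma_B$ are commensurable iff $(n+4)/(n-4)$ is a rational square.

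The main content is the equality $|Z_k(M_A,q)|=|Z_k(M_B,q)|$ for every~$k$. Abelianizing $\Gamma_A$ identifies the torsion of $H_1(M_A)$ with the cokernel of $A-\mathbf 1$ (switching to the transpose is harmless up to isomorphism), and similarly for $B$. A quick computation shows that the entries of $A-\mathbf 1=\bigl(\begin{smallmatrix}0&n\\1&n\end{smallmatrix}\bigr)$ and of $B-\mathbf 1=\bigl(\begin{smallmatrix}-2n&n\\-1-2n&n\end{smallmatrix}\bigr)$ have gcd one, while each matrix has determinant $\pm n$; hence both Smith normal forms are $\mathrm{diag}(1,n)$ and both torsion groups are cyclic of order~$n$. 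Lemma~\ref{kern} now yields $|H^1(M_A,\Z/k\Z)|=|H^1(M_B,\Z/k\Z)|$ for every~$k$, and Lemma~\ref{MOO} handles all odd~$k$ immediately.

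It remains to treat even $k=2^m b$ with $b$ odd, where Lemma~\ref{MOO} reduces the equality to showing that the cup-cube obstruction vanishes on $M_A$ iff it vanishes on $M_B$. By (\cite{MOO}, Cor.~5.3) this obstruction is governed by the linking form: $Z_k(M)=0$ exactly when there exists $x\in\mathrm{Tors}(H_1(M))$ of order~$2^m$ whose self-linking has $2$-adic denominator precisely~$2^m$. For cyclic torsion $\Z/n$ with $n=2^s m'$, $m'$ odd, the $2$-primary part of the linking form necessarily takes the shape $(x,y)\mapsto \alpha\, xy/2^s$ for some unit $\alpha\pmod{2^s}$; a short case analysis on elements of order $2^m$ in this group shows the required self-linking exists iff $m$ equals $s=v_2(n)$, \emph{independently} of $\alpha$. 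Since $M_A$ and $M_B$ share the same cyclic torsion of order $n$, the vanishing patterns coincide, completing the comparison. The decisive point—and what I expect to be the main technical observation—is exactly that this last case analysis depends only on $v_2(n)$ and not on the actual linking coefficient, so one avoids having to identify the linking forms of $M_A$ and $M_B$ individually; were the torsion not cyclic, or were one to compare full (not just absolute-value) Reshetikhin--Turaev phases, the linking data would need to be matched explicitly and the argument would be substantially harder.
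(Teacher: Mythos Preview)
Your argument correctly establishes $|Z_k(M_A,q)|=|Z_k(M_B,q)|$ for all $k$, but this is strictly weaker than what the proposition asserts. The sentence preceding the proposition makes the intended meaning explicit: the examples are meant to show that \emph{all abelian invariants (of Reshetikhin--Turaev type, not only their absolute values)} fail to distinguish the two commensurability classes. Thus ``same quantum abelian invariants'' refers to the full Deloup invariants of \cite{Del1}, phases included, and by the Deloup--Gille classification \cite{DG} these agree if and only if the first Betti numbers coincide and the linking pairings are \emph{isomorphic}. You explicitly avoid comparing the linking forms, and your even-$k$ analysis only uses that both forms live on a cyclic group of the same order; this pins down the vanishing pattern of $|Z_k|$ but says nothing about the phase. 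So as written, the proof does not establish the proposition.

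The paper closes this gap by a direct computation: it writes the linking pairing of $M_A$ as $L_A([\eta],[\xi])=\omega((A-\mathbf 1)^{-1}\eta,\xi)\in\Q/\Z$ and checks that $(A-\mathbf 1)^{-1}-(B-\mathbf 1)^{-1}$ is the integral matrix $\left(\begin{smallmatrix}2&-2\\2&-2\end{smallmatrix}\right)$, so the two pairings are literally equal on representatives and hence isomorphic. Together with equality of Betti numbers (both manifolds are SOL when $n\ge 1$, $n\ne 4$, hence $b_1=1$), the Deloup--Gille criterion then gives equality of all abelian Reshetikhin--Turaev invariants. Your treatment of the trace arithmetic and the commensurability clause via Barbot's criterion matches the paper and is fine.
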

\begin{proof}
The quantum abelian invariants from \cite{Del1}  are identical 
for two manifolds if and only if their 
first Betti numbers agree and their linking pairings are isomorphic 
(see \cite{DG}). 

 Let $T$ be the torus fiber of $M_A$. The we have the 
exact sequence: 
\begin{equation}
H_2(M_A)\to H_1(T)\stackrel{A-\mathbf 1}{\to} H_1(T)\to H_1(M_A)\to \Z
\end{equation} 
Therefore $H_1(M_A)=\Z\oplus {\rm Tors}(H_1(M_A))$, where the torsion 
${\rm Tors}(H_1(M_A))$ is the image of $H_1(T)$ into $H_1(M_A)$. 

The linking pairing $L_A:{\rm {\rm Tors}}(H_1(M_A))\times {\rm {\rm Tors}}(H_1(M_A))\to \Q/\Z$ 
is defined as follows. For every $\xi \in {\rm {\rm Tors}}(H_1(M_A))$ we choose 
a lift of it as an element in $H_2(M_A;\Q/\Z)$, namely an element $\hat{\xi}$
whose image by the boundary connecting homomorphism 
$\delta_*:H_2(M_A;\Q/\Z)\to H_1(M,\Z)$ is exactly $\xi$. 
Here the connecting homomorphism comes from the 
long exact sequence associated to the coefficients exact sequence: 
\begin{equation}
\to H_2(M_A;\Q)\to H_2(M_A;\Q/\Z)\to H_1(M,\Z)\to H_1(M,\Q)\to 
\end{equation}
We take then $L_A([\eta],[\xi])=\eta \cdot \hat{\xi}\in \Q/\Z$ 
where the intersection product is the one 
$H_1(M,\Z)\times H_2(M_A;\Q/\Z)\to \Q/\Z$. 

If we have a 1-cycle $\xi$ representing the class $[\xi]\in H_1(T^2)$ 
then its product with $[0,1]$ yields 
a 2-chain whose boundary is $(A-\mathbf 1)\xi$. This implies 
that the linking pairing of $M_A$ is given by: 
\begin{equation}
L_A([\eta],[\xi]) =\omega((A-\mathbf 1)^{-1}(\eta),\xi)\in \Q/\Z
\end{equation} 
where $\eta,\xi\in H_1(T)\cong \Z^2$ are representing (torsion) classes in 
$\Z^2/(A-\mathbf 1)(\Z^2)\subset H_1(M_A)$ and 
$\omega$ is the usual (symplectic) intersection form on $H_1(T)$, namely 
\begin{equation}
\omega((v_1,v_2),(w_1,w_2))=v_1w_2-v_2w_1
\end{equation}  
The torsion group of 1-homologies of $M_A$ and $M_B$ are both 
cyclic groups of order $|{\rm Tr}(A)-2|$ since the first invariant 
factors for the integral matrices $A-\mathbf 1$ and $B-\mathbf 1$ are 
both equal to 1. Thus the torsion homology groups are isomorphic.   
Now we can verify that 
$(A-\mathbf 1)^{-1}-(B-\mathbf 1)^{-1}$ 
is the integral matrix 
$\left(\begin{array}{cc}
2 & -2 \\
2 & -2 \\
\end{array}
\right)$ such that the linking pairings of $M_A$ and $M_B$ are isomorphic. 
If $n\geq 1$ and $n\neq 4$ then these torus bundles are SOL manifolds.
Their Betti numbers coincide as all SOL manifolds have their 
first Betti number equal to 1.

The statement concerning the commensurability is a consequence of the 
commensurability criterion for the polycyclic groups from 
Lemma \ref{commensur} saying that $\Gamma_A$ and 
$\Gamma_B$ are commensurable if and only if ${\mathcal D}_A/\mathcal D_B\in \Q^2$. 
\end{proof}

\begin{remark}
If $n=4$ then $M_A$ is a SOL torus bundle 
but $M_B$ is a NIL manifold. Although their linking pairings are isomorphic 
their first Betti numbers are different, as the Nil manifold 
has Betti number 2. Another pairs with the same property are 
\begin{equation}
A=\left(\begin{array}{cc}
3 & 2 \\
4 & 3 \\
\end{array}
\right), \,\, B=\left(\begin{array}{cc}
3 & 2 \\
-8 & -5 \\
\end{array}
\right) 
\end{equation} 
\end{remark}

\subsection{$SU(2)$-invariants and the metaplectic representations}

Denote by $\rho_{SU(2),k}$ (and respectively $\rho_{U(1),k}$) 
the $SL(2,\Z)$ representation associated to the 
modular tensor category constructed out of $SU(2)$ (and respectively 
$U(1)$ or $\Z/k\Z$) in level $k$ (see \cite{Tu}). 
It should be noticed that the parameters $\lambda_{SU(2),k}, \zeta_{SU(2),k}$ 
do not agree with $\lambda_{U(1),k}, \zeta_{U(1),k}$. 
For instance $\zeta_{U(1),k}=\exp\left(\frac{\pi i}{4}\right)$ is independent on $k$. 
The choice of the rank and anomaly will be irrelevant in the arguments below.

Recall first that both representations 
$\rho_{SU(2),k}$ and $\rho_{U(1),k}$ factor through the finite congruence group 
$SL(2,\Z/k\Z)$. 

Now explicit formulas for the values of $SU(2)$ quantum invariants 
of torus bundles were obtained in \cite{J} by Jeffrey. Nevertheless 
it seems difficult to extract explicit topological information out of them. 

The key point in our computation is the existence of simple formulas for the 
characters of the $SU(2)$ quantum representations: 

\begin{proposition}\label{SUchar}
We have 
\begin{equation}
2 {\rm Tr}(\rho_{SU(2),k}(A))=
{\rm Tr}(\rho_{U(1),k}(A))-
{\rm Tr}(\rho_{U(1),k}(-A))
\end{equation}
\end{proposition}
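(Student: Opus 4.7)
The plan is to identify the $SU(2)_k$ representation with a direct summand of the $U(1)_k$ representation, specifically the $(-1)$-eigenspace of the involution $\rho_{U(1),k}(-\mathbf{1})$, and then extract the trace identity from a standard projection formula.

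First I would recall that $-\mathbf{1}=\mathfrak{s}^2\in SL(2,\Z)$, so $\rho_{U(1),k}(-\mathbf{1})$ is, up to a scalar coming from the normalization $\lambda$, the matrix $S^2$. For any modular tensor category $S^2$ equals the charge-conjugation matrix $C$, and for the abelian $U(1)$ theory (with simple objects labelled by a cyclic group) this is the involution $m\mapsto -m$ on the basis. Its $(-1)$-eigenspace has a basis $\{f_m-f_{-m}\}$ indexed by the non-trivial $C$-orbits, which under the paper's conventions has exactly dimension $k+1 = \dim V_{SU(2),k}$.

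Next I would construct an explicit $SL(2,\Z)$-equivariant isomorphism from $V_{SU(2),k}$ onto this odd subspace. This is essentially the content of the classical Weyl--Kac character formula for $\widehat{\mathfrak{sl}_2}$ at level $k$, which writes the integrable character of highest weight $\lambda$ as
\begin{equation*}
\chi_\lambda = \frac{\Theta_{\lambda+1,\,k+2}-\Theta_{-(\lambda+1),\,k+2}}{\Theta_{1,2}-\Theta_{-1,2}},
\end{equation*}
so that the basis vector $e_\lambda\in V_{SU(2),k}$ corresponds to $f_{\lambda+1}-f_{-(\lambda+1)}$. That this map intertwines the $SL(2,\Z)$ action can be checked directly from the Kac--Peterson formula $S^{SU(2)}_{\lambda\mu}=\sqrt{2/(k+2)}\sin((\lambda+1)(\mu+1)\pi/(k+2))$ together with $2i\sin\theta=e^{i\theta}-e^{-i\theta}$, which decomposes $S^{SU(2)}$ as an antisymmetric combination of entries of the $U(1)_k$ matrix $S^{U(1)}$. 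A parallel check on the $T$-matrices is immediate from the conformal weights $(\lambda+1)^2/4(k+2)$.

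Once this embedding is established as an $SL(2,\Z)$-equivariant decomposition of $V_{U(1),k}$ into $\pm 1$-eigenspaces of $\rho_{U(1),k}(-\mathbf{1})$, the trace identity is formal: the projector onto the odd part is $P_{\rm odd}=\tfrac{1}{2}(\mathbf{1}-\rho_{U(1),k}(-\mathbf{1}))$, and since $\rho_{U(1),k}(-\mathbf{1})\cdot\rho_{U(1),k}(A)=\rho_{U(1),k}(-A)$ we obtain
\begin{equation*}
\mathrm{Tr}(\rho_{SU(2),k}(A))
= \mathrm{Tr}\bigl(P_{\rm odd}\cdot\rho_{U(1),k}(A)\bigr)
= \tfrac{1}{2}\bigl(\mathrm{Tr}(\rho_{U(1),k}(A))-\mathrm{Tr}(\rho_{U(1),k}(-A))\bigr).
\end{equation*}
The main obstacle is the bookkeeping of conventions: aligning the ``level $k$'' parametrizations for the two theories (so that the paper's $U(1)_k$ corresponds to the abelian modular category of order $2(k+2)$), tracking the scalars $\lambda_{SU(2),k},\zeta_{SU(2),k}$ versus $\lambda_{U(1),k},\zeta_{U(1),k}$ consistently so that the claimed equality of traces holds as stated and not merely projectively, and verifying that the fixed vectors of the involution $\rho_{U(1),k}(-\mathbf{1})$ (which belong neither to the $SU(2)$ summand nor to the odd complement) contribute identically to $\mathrm{Tr}(\rho_{U(1),k}(A))$ and $\mathrm{Tr}(\rho_{U(1),k}(-A))$ and therefore cancel.
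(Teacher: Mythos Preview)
Your proposal is correct and follows essentially the same route as the paper: identify $\rho_{SU(2),k}$ with the restriction of the metaplectic representation $\rho_{U(1),k}$ to the $(-1)$-eigenspace $V_k^-$ of the involution $\rho_{U(1),k}(-\mathbf{1})$, and then read off the trace identity from the projector $P_{\rm odd}=\tfrac{1}{2}(\mathbf{1}-\rho_{U(1),k}(-\mathbf{1}))$. The only difference is that the paper outsources the identification $\rho_{SU(2),k}\cong\rho_{U(1),k}|_{V_k^-}$ to the literature (the theta-function construction of \cite{F3}, the explicit comparison in \cite{FK,FHKMN}, and the refinements of Larsen--Wang and Gilmer), whereas you propose to verify it directly via the Kac--Peterson $S$-matrix formula and the Weyl--Kac numerator; these are really the same computation, and your concerns about aligning levels and scalar normalizations are exactly the bookkeeping the paper sweeps under the phrase ``the choice of the rank and anomaly will be irrelevant in the arguments below.''
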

\begin{proof}
The finite symplectic groups $Sp(2g,\Z/k\Z)$ are endowed with 
(projective) representations into some complex 
vector space $V_k$, which are known under the name 
of Segal-Shale-Weil metaplectic representations. Although these 
were classically constructed only for prime $k$ there exist 
now several constructions valid for every $k$.  
In \cite{F1,F2,Go} one constructed 
such representations for every even $k$ (and for a  congruence 
quotient of the Theta group $\Gamma[2]$ when $k$ is odd) 
in any dimension $g$ using level $k$ theta functions. 
The monodromy representations 
from \cite{MOO} agree with the previous constructions and work for every 
odd $k$ as well. 
Later in \cite{FHKMN}  one described a direct construction of the 
metaplectic $SL(2,\Z/k\Z)$ representations which were further 
generalized in \cite{KN} to higher dimensions.

The following seems to be widely known among experts: 
\begin{lemma}
The  $SL(2,\Z)$ quantum representations $\rho_{U(1),k}$
are lifts of the projective metaplectic representations.  
\end{lemma}

The theta functions construction was generalized in 
\cite{F3} to quantizations of multidimensional tori endowed with 
Coxeter group actions. This leads to finite symplectic group 
representations depending on a semisimple Lie group $G$ or, equivalently 
on a Coxeter group $W$ (corresponding to the Weyl group of $G$). 
It was already noticed in \cite{F3} 
that the $SL(2,\Z/k\Z)$ representations associated to 
$W=\Z/2\Z$ coincide (projectively) with $\rho_{SU(2),k}$. 

\begin{lemma}
Let $\tau=\left(\begin{array}{ll} 
-1 & 0 \\
0 & -1
\end{array}
\right)\in SL(2,\Z)$. The space $V_k$ splits into eigenspaces 
for the metaplectic action of $\tau$ as $V_k=V_k^{+}\oplus V_k^{-}$, where 
\begin{equation}
V_k^{\pm}=\{x\in V_k; \:  \rho_{U(1),k}(\tau)(x)=\pm x\}
\end{equation}  
Then the representation $\rho_{SU(2),k}$ of $SL(2,\Z)$ is 
isomorphic to the restriction $\rho_{U(1),k}|_{V_k^{-}}$ of the metaplectic 
representation  to the invariant sub-module ${V_k^{-}}$.  
\end{lemma}
\begin{proof}
This was made so by the explicit construction in \cite{F3}. 
The result was also formulated explicitly in (\cite{FK}, section 5) 
for prime $k$, but the same argument is valid for all $k$ 
when comparing with the formulas in \cite{FHKMN}. 
A more precise result was given by
Larsen and Wang in \cite{LW} and independently by Gilmer in 
(\cite{Gil}, Thm.5.2). 
\end{proof}

The two lemmas above prove the claim, since the characters of the factors 
$V^{\pm}$ are precisely the $\pm$-invariant part of the character of $V_k$. 
\end{proof}

Recall now from Proposition \ref{invar} and equation (\ref{RTinvar}) 
that the  Reshetikhin -Turaev quantum invariants 
of the torus bundle $M_A$ are suitable 
multiples of the corresponding characters, as follows:  
\begin{equation}\label{RT}
RT_{SU(2),k}(M_A)=\zeta_{SU(2),k}^{-3\varphi(A)} 
{\rm Tr}(\rho_{SU(2),k}(A), \:\;  
RT_{U(1),k}(M_A)=\zeta_{U(1),k}^{-3\varphi(A)} 
{\rm Tr}(\rho_{U(1),k}(A)
\end{equation} 
 
The Turaev-Viro abelian invariant is known to be the same as the 
absolute value of the MOO invariant (up to a scalar) and this can be extended 
as follows:  

\begin{lemma}
For any oriented 3-manifolds we have: 
\begin{equation}
RT_{U(1),k}(M)=k^{-1/2}Z_k(M,q)
\end{equation}
\end{lemma}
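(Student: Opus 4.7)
The plan is to compute $RT_{U(1),k}(M)$ directly from its surgery definition using the abelian modular tensor category associated to $\Z/k\Z$ (for odd $k$) or $\Z/2k\Z$ (for even $k$) described in Turaev's book on p.~29, and to match the result term-by-term against the formula for $Z_k(M,q)$ given in \eqref{inv}. Both quantities are built from the same combinatorial input: a framed link $L\subset S^3$ with $n$ components and linking matrix $A_L$ such that $M=S^3_L$.

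First I would unpack the data of the $U(1)$ modular tensor category. Its simple objects are indexed by $\Z/k\Z$ (resp.\ $\Z/2k\Z$), the twist is $\theta_i=q^{i^2}$, all quantum dimensions are equal to $1$, and a direct computation gives $\dim(\mathcal C_{U(1),k})=k$, so the rank is $\lambda=\sqrt{k}$. The Gauss sum of the category is $\Delta=p^{-}_{\mathcal C}=\overline{G_k(q)}$, and the anomaly factor satisfies $\zeta^3_{U(1),k}=G_k(q)/|G_k(q)|$ (using $|G_k(q)|=\sqrt{k}$, a classical identity for quadratic Gauss sums). Second I would evaluate the Reshetikhin-Turaev link invariant on $L$ colored by $x=(x_1,\dots,x_n)$: since the braiding on simple objects is multiplicative in the index, the standard colored link polynomial reduces to $\prod_{i,j}q^{(A_L)_{ij}x_ix_j}=q^{\,^T\!xA_Lx}$.

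Third, I would assemble the surgery formula
\[
RT_{U(1),k}(M)=\lambda^{-n-1}\,\Delta^{-\sigma(A_L)}\sum_{x\in(\Z/k\Z)^n}q^{\,^T\!xA_Lx},
\]
and compare it to \eqref{inv}. Substituting $\lambda=\sqrt{k}=|G_k(q)|$, we get the prefactor
\[
\lambda^{-n-1}=k^{-1/2}\cdot|G_k(q)|^{-n},
\]
while the anomaly correction reproduces the signature factor $\bigl(G_k(q)/|G_k(q)|\bigr)^{-\sigma(A_L)}$ of \eqref{inv}. Collecting these identifications gives exactly $RT_{U(1),k}(M)=k^{-1/2}Z_k(M,q)$.

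The main obstacle will be bookkeeping the normalization conventions. The rank $\lambda$, the Gauss sum $\Delta$, and the anomaly $\zeta^3$ are each defined up to conventions that differ by phases between Turaev's setup and the \cite{MOO} setup; one must also treat the odd and even $k$ cases uniformly, since in the even case the modular data involves $\Z/2k\Z$ while the sum in \eqref{inv} runs over $(\Z/k\Z)^n$ via the well-defined lift of $q^{\,^T\!xA_Lx}$. Once these normalizations are aligned (which can be done by checking the identity on the two generators of $SL(2,\Z)$ and on $S^3$, $S^1\times S^2$), the general case follows from Kirby move invariance, which both sides are independently known to satisfy.
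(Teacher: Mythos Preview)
Your approach is sound and, in fact, coincides with the second proof the paper alludes to: the direct identification carried out in Deloup's Appendix~A using the abelian modular tensor category from \cite{Tu}, p.~29. Your sketch of the surgery computation (twist $\theta_i=q^{i^2}$, trivial quantum dimensions, Gauss sums for $p^{\pm}$, colored link value $q^{\,{}^T\!xA_Lx}$) is the right one, and you correctly flag the even-$k$ bookkeeping as the only nontrivial step.

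The paper's own (first) argument is shorter and more indirect: rather than unwind the surgery formula, it observes that the underlying projective $SL(2,\Z)$ representations of the MOO TQFT and of $RT_{U(1),k}$ are known to be isomorphic (this is what \cite{MOO,F1,F2,Go} establish), and that the two theories have the same anomaly. Since the invariant of a closed $3$-manifold is determined by the projective representation together with the anomaly up to an overall scalar, and since the absolute values are already known to match via $TV_{U(1),k}$, the identity follows. This argument avoids the even/odd case split and the explicit normalization chase you describe, at the cost of appealing to the existing literature for the identification of the metaplectic representation with $\rho_{U(1),k}$. Your computational route is more self-contained but requires care with the $\Z/2k\Z$ versus $\Z/k\Z$ summation range in the even case; both are valid.
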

\begin{proof}
We know that $TV_{U(1),k}(M_A)=k^{-1/2}|Z_k(M,q)|$ and the associated 
projective representations are isomorphic (see \cite{MOO,F1,F2,Go}). 
The anomalies are the same and thus the associated Reshetikhin-Turaev 
invariants agree. 

Another proof is given in (\cite{Del1}, Appendix A) where one uses 
the modular tensor category from (\cite{Tu}, p.29). 
\end{proof}

Assume now that $RT_{SU(2),k}(M_A)=RT_{SU(2),k}(M_B)$ and 
$RT_{U(1),k}(M_A)=RT_{U(1),k}(M_B)$. 
Then Proposition \ref{SUchar} and relations (\ref{RT}) imply that 
$RT_{U(1),k}(M_{\tau A})=RT_{U(1),k}(M_{\tau B})$. In particular, 
applying the result of Proposition \ref{equalMOO} we obtain that either 
${\rm Tr}(A)={\rm Tr}(B)$ or else 
${\rm Tr}(-A)+{\rm Tr}(-B)=4$. The only possibility is that 
${\rm Tr}(A)={\rm Tr}(B)$.

The case when the Turaev-Viro invariants of the two 
manifolds agree is only slightly more complicated. 
The key point is that 
Proposition \ref{SUchar} leads  to a closed formula for the 
$SU(2)$ quantum invariants of torus bundles. We restrict, 
for the sake simplicity, to the case of  
Turaev-Viro invariants, which are central in our argument.

\begin{proposition}\label{closedform}
Let $A\in SL(2,\Z)$ and $k$ be large enough such that whenever  
$p^m$, with prime $p$ and $m\geq 1$, divides some invariant factors   
of $A-{\mathbf 1}$ or  $A+\mathbf 1$ then it also divides $k$.  
Then the $SU(2)$-Turaev-Viro invariant of $M_A$ is given by 
\begin{eqnarray}\label{TVS}
|{\rm Tr}(\rho_{SU(2),k}(A)|^2&=& TV_{SU(2),k}(M_A)=\nonumber \\
&=&\left(\sqrt{{\rm g.c.d.}({\rm Tr}(A)-2, k)}-
\exp\left(\frac{\pi i}{4}\left(f_{k}(M_A)\right)\right) 
\sqrt{{\rm g.c.d.}({\rm Tr}(A)+2,k)}\right)^2 
\end{eqnarray}
where $f_{k}(M_A)=\phi_k(M_{\tau A})-\phi_k(M_A)$, $\tau A=-A$ and 
$\phi_k(M_A)\in \Z/8\Z$ is the function introduced in 
(\cite{MOO}, section 4).  
\end{proposition}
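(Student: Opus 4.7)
The plan is to combine the character identity of Proposition \ref{SUchar} with the explicit MOO calculation of $U(1)$ invariants available from Lemma \ref{MOO} and Lemma \ref{kern}. First, rewrite Proposition \ref{SUchar} as
\begin{equation*}
2\,{\rm Tr}(\rho_{SU(2),k}(A)) = {\rm Tr}(\rho_{U(1),k}(A)) - {\rm Tr}(\rho_{U(1),k}(\tau A)),
\end{equation*}
and use formula (\ref{RT}) together with the preceding identification $RT_{U(1),k}(M) = k^{-1/2}Z_k(M,q)$ to replace each trace by $\zeta_{U(1),k}^{3\varphi(\cdot)}\,k^{-1/2}Z_k(M_\cdot,q)$. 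Polar-decompose each MOO invariant as $Z_k(M_B,q) = |Z_k(M_B,q)|\exp\!\bigl(\tfrac{\pi i}{4}\phi_k(M_B)\bigr)$, where $\phi_k(M_B)\in\Z/8\Z$ is the phase defined in (\cite{MOO}, section 4) coming from the signature of the linking matrix of a surgery presentation.

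Second, extract the absolute values. By Lemma \ref{MOO}, $|Z_k(M,q)|^2 = |H^1(M,\Z/k\Z)|$ whenever $k$ is good for $M$, and by Lemma \ref{kern} we have
\begin{equation*}
|H^1(M_A,\Z/k\Z)| = k\cdot|\ker\nu_k(A^T-{\bf 1})|.
\end{equation*}
The Smith-normal-form argument reproduced inside the proof of Proposition \ref{equalMOO}, combined with the divisibility hypothesis on $k$, gives
\begin{equation*}
|\ker\nu_k(A^T-{\bf 1})| = {\rm g.c.d.}(|\det(A-{\bf 1})|,k) = {\rm g.c.d.}({\rm Tr}(A)-2,k),
\end{equation*}
and replacing $A$ by $\tau A = -A$ and using $\det(-A-{\bf 1})=\det(A+{\bf 1})={\rm Tr}(A)+2$ for $2\times 2$ matrices yields the analogous formula with ${\rm Tr}(A)+2$. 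Hence $k^{-1/2}|Z_k(M_A,q)|=\sqrt{{\rm g.c.d.}({\rm Tr}(A)-2,k)}$ and $k^{-1/2}|Z_k(M_{\tau A},q)|=\sqrt{{\rm g.c.d.}({\rm Tr}(A)+2,k)}$.

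Substituting into the character identity above, one obtains
\begin{equation*}
2\,{\rm Tr}(\rho_{SU(2),k}(A)) = \zeta^{3\varphi(A)}e^{\tfrac{i\pi}{4}\phi_k(M_A)}\sqrt{{\rm g.c.d.}({\rm Tr}(A)-2,k)} - \zeta^{3\varphi(\tau A)}e^{\tfrac{i\pi}{4}\phi_k(M_{\tau A})}\sqrt{{\rm g.c.d.}({\rm Tr}(A)+2,k)},
\end{equation*}
where $\zeta = \zeta_{U(1),k}$. Factoring out the unimodular global phase $\zeta^{3\varphi(A)}e^{i\pi\phi_k(M_A)/4}$ and taking modulus squared gives the desired expression, since $TV_{SU(2),k}(M_A) = |{\rm Tr}(\rho_{SU(2),k}(A))|^2$ independently of any anomaly phase. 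The function $f_k(M_A)$ then arises as the net phase difference $\phi_k(M_{\tau A})-\phi_k(M_A)$ corrected by the $\zeta$-contribution coming from $\varphi(\tau A)-\varphi(A)$.

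The main obstacle is the careful bookkeeping of phases, in particular verifying that the anomaly correction $\zeta^{3(\varphi(\tau A)-\varphi(A))}$ merges cleanly with the MOO phase difference into an eighth root of unity matching $\phi_k(M_{\tau A})-\phi_k(M_A)$; this reduces to an identity between the Rademacher--Meyer cocycle evaluated at $A$ versus $-A$ and the signature-defect phase of MOO, which should be a direct calculation rather than a conceptual difficulty. A secondary technical point is checking that the divisibility hypothesis on $k$ automatically ensures the cup-product goodness condition of Lemma \ref{MOO}, which requires analyzing the linking form of $M_A$ on its torsion $H_1$, cyclic of order $|{\rm Tr}(A)-2|$ as computed in the proof of Proposition \ref{equalMOO}, and ruling out classes with non-vanishing triple self-cup.
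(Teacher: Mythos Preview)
Your approach is essentially the same as the paper's: combine Proposition~\ref{SUchar} with the MOO computation via Lemmas~\ref{MOO} and~\ref{kern}, use Smith normal form to evaluate the kernel sizes under the divisibility hypothesis, and then take the modulus squared.

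The one place where the paper is sharper is the phase bookkeeping you flag as the main obstacle. Rather than leaving the $\zeta^{3(\varphi(\tau A)-\varphi(A))}$ contribution to be ``merged'' into $f_k$, the paper proves as a standalone lemma that $\varphi(A)=\varphi(\tau A)$ for hyperbolic $A$: since $\Phi_R$ is defined on $PSL(2,\Z)$ one has $\Phi_R(A)=\Phi_R(-A)$, and the correction $\varphi(A)-\Phi_R(A)=-{\rm sgn}(\gamma(\alpha+\delta-2))$ is checked by inspection to be invariant under $A\mapsto -A$ when $|\alpha+\delta|>2$. This kills the Meyer contribution outright, so $f_k(M_A)=\phi_k(M_{\tau A})-\phi_k(M_A)$ on the nose, exactly as stated in the proposition. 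Your formulation, where $f_k$ would absorb a residual Meyer term, would not literally match the statement unless you prove this vanishing; you correctly anticipate that it is a direct calculation, and indeed it is the one above.
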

\begin{proof}
We need first the following: 
\begin{lemma}
If $A$ is hyperbolic then $\varphi(A)=\varphi(\tau A)$. 
\end{lemma}
\begin{proof}
By definition $\Phi_R(A)=\Phi_R(-A)$ since the Rademacher function 
is defined on $PSL(2,\Z)$. Further, by (\ref{difer})  
the function $\varphi(A)-\Phi_R(A)$ 
is  equal to ${\rm sgn}(\gamma(\alpha+\delta-2))$  
when $A=\left(\begin{array}{cc}
\alpha & \beta \\
\gamma & \delta \\
\end{array}\right)$ and so it also 
satisfies $\varphi(A)-\Phi_R(A)=\varphi(-A)-\Phi_R(-A)$ 
when $A$ is hyperbolic, by direct inspection.  
\end{proof}
The last lemma implies that 
\begin{equation}
Z_k(M_A,q)-Z_k(M_{\tau A},q)=
\zeta_{U(1),k}^{-3\varphi(A)} 
({\rm Tr}(\rho_{U(1),k}(A)-{\rm Tr}(\rho_{U(1),k}(\tau A))
\end{equation}
We have the following: 
\begin{lemma}\label{even}
If $A$ is hyperbolic and $k$ is good for $M_A$ and sufficiently large 
then 
\begin{equation}
{\rm Tr}(\rho_{U(1),k}(A) = \exp\left(\frac{\pi i}{4}\left(\varphi(A)+\phi_k(M_A)\right)\right) |\ker \nu_k(A-{\mathbf 1})|
\end{equation}
where $\phi_k(M_A)\in \Z/8\Z$ is the function introduced in 
(\cite{MOO}, section 4).  
\end{lemma}
\begin{proof}
The MOO invariant was computed in (\cite{MOO}, Thm.4.5) for those $k$ 
for which the invariant is non-zero, as being 
\begin{equation}
Z_k(M_A,q)=\exp\left(\frac{\pi i}{4}\left(\varphi(A)+\phi_k(M_A)\right)\right) 
|H^1(M_A,\Z/k\Z)|
\end{equation}
Since $\zeta_{U(1),k}^3=\exp\left(\frac{\pi i}{4}\right)$ we 
obtain: 
\begin{equation}
{\rm Tr}(\rho_{U(1),k}(A) = \zeta^{3\varphi(A)}RT_{U(1),k}(M_{\tau A})=
\exp\left(\frac{\pi i}{4}\left(\varphi(A)+\phi_k(M_A)\right)\right) 
k^{-1/2}|H^1(M_A,\Z/k\Z)|^{1/2}
\end{equation}
which implies the claim. 
\end{proof}
Now, if $A$ is hyperbolic  and $k$ is large enough 
then use Lemma \ref{even} to derive: 
\begin{eqnarray}\label{TVSU}
|{\rm Tr}(\rho_{SU(2),k}(A)|^2&=& TV_{SU(2),k}(M_A)= |RT_{SU(2),k}(M_A)|^2=
k^{-1}|Z_k(M_A,q)-Z_k(M_{\tau A},q)|^2= \nonumber \\
&=&\left||\ker \nu_k(A^T-{\mathbf 1})|^{1/2}-
\exp\left(\frac{\pi i}{4}\left(\phi_k(M_{\tau A})-\phi_k(M_A)\right)\right) 
|\ker \nu_k(A+{\mathbf 1})|^{1/2}\right|^2 
\end{eqnarray}
Then the closed formula (\ref{TVS}) follows. 
\end{proof}

\subsection{End of the proof of Theorem \ref{comm}}
It remains to prove the following: 
\begin{proposition}\label{retrievetrace}
If the SOL torus bundles 
$M_A$ and $M_B$ have the same abelian and $SU(2)$ Turaev-Viro 
invariants then ${\rm Tr}(A)={\rm Tr}(B)$. 
\end{proposition}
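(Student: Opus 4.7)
The plan is to combine the dichotomy of Proposition~\ref{equalMOO} (coming from the agreement of the abelian Turaev--Viro invariants) with the closed formula from Proposition~\ref{closedform} (for the $SU(2)$ invariants), and to rule out the alternative ${\rm Tr}(A)+{\rm Tr}(B)=4$ by choosing the level $k$ arithmetically. Set $a={\rm Tr}(A)$ and $b={\rm Tr}(B)$. Since the abelian invariants of $M_A$ and $M_B$ agree, Proposition~\ref{equalMOO} gives either $a=b$ (the goal) or $a+b=4$. Assume, for contradiction, the second alternative with $a\neq b$; since both bundles are SOL, $|a|,|b|>2$ forces $a>6$ or $a<-2$. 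Notice that $b-2=-(a-2)$, so $\gcd(a-2,k)=\gcd(b-2,k)$ for every $k$, while $b+2=6-a$ satisfies $(a+2)-(6-a)=2(a-2)$ and $(a+2)+(a-6)=2(a-2)$; in particular $\gcd(a+2,a-6)\mid 8$ and $\gcd(a+2,a-2)\mid 4$.

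Next, using Proposition~\ref{closedform} one writes
\begin{equation*}
TV_{SU(2),k}(M_A)=u^{2}+v_{A}^{2}-2uv_{A}\cos\theta_{A}^{(k)},\qquad TV_{SU(2),k}(M_B)=u^{2}+v_{B}^{2}-2uv_{B}\cos\theta_{B}^{(k)},
\end{equation*}
where $u^{2}=\gcd(a-2,k)$, $v_{A}^{2}=\gcd(a+2,k)$, $v_{B}^{2}=\gcd(6-a,k)$, and the phases $\theta_{A,B}^{(k)}$ are multiples of $\pi/4$ coming from the MOO phase $\phi_k$. Choose an odd prime $p$ dividing $a+2$ (symmetrically, a prime dividing $a-6$ if $a+2=\pm 2^{\alpha}$); by the previous remark such a $p$ divides neither $a-6$ nor $a-2$. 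Take $k=p^{r}$ with $r$ large enough that all $p$-adic valuations stabilize. Then $u^{2}=1$ and $v_{B}^{2}=1$, while $v_{A}^{2}=P:=p^{v_{p}(a+2)}\geq 3$, and the equation $TV_{SU(2),p^{r}}(M_{A})=TV_{SU(2),p^{r}}(M_{B})$ reduces to
\begin{equation*}
P-1=2\sqrt{P}\,c_{A}-2c_{B},\qquad c_{A},c_{B}\in\{0,\pm 1/\sqrt 2,\pm 1\}.
\end{equation*}

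An elementary $\Q(\sqrt 2)$-rationality analysis of this equation forces $P\in\{1,3,9\}$: indeed $\sqrt P$ is irrational unless $P$ is a square, and $P$ is odd so $\sqrt{2P}$ is never rational, which rules out $c_{A}=\pm 1/\sqrt 2$; the cases $c_{A}\in\{0,\pm 1\}$ lead by direct computation to $P\in\{1,3,9\}$. Swapping the roles of $A$ and $B$ gives the symmetric conclusion for $a-6$. This leaves only finitely many candidate values of $a$ (those for which both $a+2$ and $a-6$ are of the shape $\pm 2^{\alpha}\cdot\{1,3,9\}$), and each such $a$ is eliminated by re-running Proposition~\ref{closedform} at another level $k$ (for example $k=2^{s}$ with $s$ large, where the explicit phase $\phi_{k}$ from the linking form of $M_{A}$, respectively $M_{B}$, separates the two bundles).

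The main obstacle is precisely the appearance of the phase $e^{i\pi f_{k}/4}$ in Proposition~\ref{closedform}: the bare gcd terms coming from the $U(1)$ part alone do not immediately separate $a$ from $4-a$, and one has to exploit both the algebraic structure of $8$-th roots of unity (via the $\Q(\sqrt 2)$-rationality argument above) and the 2-adic behaviour of the MOO phase $\phi_{k}$ to close the gap. Once this is done the contradiction is clean and yields ${\rm Tr}(A)={\rm Tr}(B)$, hence strong commensurability via Lemma~\ref{commensur}.
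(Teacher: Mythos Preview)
Your approach is essentially the same as the paper's: reduce via Proposition~\ref{equalMOO} to the alternative ${\rm Tr}(A)+{\rm Tr}(B)=4$, and then use the closed formula of Proposition~\ref{closedform} for the $SU(2)$ Turaev--Viro invariants at carefully chosen levels to derive a contradiction. Your $\Q(\sqrt 2)$-rationality argument is a clean way to cut the odd prime powers dividing $a+2$ (resp.\ $a-6$) down to $\{3,9\}$; the paper instead proves directly that the odd prime divisors of $a+2$ and $a-6$ coincide (hence both are $\pm$ powers of $2$), by an exhaustive case analysis.

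There are, however, two genuine gaps. First, you invoke Proposition~\ref{closedform} with $k=p^{r}$ for a single odd prime $p$, but the stated hypothesis of that proposition requires $k$ to absorb \emph{all} prime powers occurring in the invariant factors of $A\pm\mathbf 1$ (and of $B\pm\mathbf 1$); an odd prime power cannot do this when $a$ is even, and even for $a$ odd you still need $k$ to see the other odd primes in $a\pm 2$. The paper handles this by taking $k$ of the shape $2^{m}p^{m}$ (or $2^{m}p^{m}c^{m}$) with $m$ large, so that the hypothesis of Proposition~\ref{closedform} (and the ``good $k$'' condition from Lemma~\ref{MOO}) are genuinely satisfied. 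You must either do the same, or go back into the proof of Proposition~\ref{closedform} (through Lemma~\ref{even} and the invariant-factor computation preceding it) and check that for $k=p^{r}$ with $r$ large the identity $|\ker\nu_{k}(A\pm\mathbf 1)|=\gcd(a\mp 2,k)$ and the phase formula still hold; this is not automatic from the statement.

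Second, your endgame (``each such $a$ is eliminated by re-running Proposition~\ref{closedform} at another level $k$, for example $k=2^{s}$'') is only asserted. The paper actually performs this elimination: once $a+2$ and $a-6$ are forced to be $\pm 2^{m}$ and $\pm 2^{n}$, their difference $8$ pins down $m\leq 4$, leaving $a\in\{2,14,-10\}$, and the surviving values are then checked against~(\ref{eqtr}). In your version the residual list is larger (because of the extra factors $3,9$), and you still owe the verification, including control of the phase $f_{k}$ at the $2$-adic level. Until those two points are filled in, the argument is incomplete.
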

\begin{proof}
We have to recall (see e.g. \cite{Tu}, section VI) that 
the modular tensor category ${\mathcal C}_{SU(2)}$ which is 
leading to the $SU(2)$ invariants is defined only when the level 
is of the form $4n$, with $n\geq 3$.

We assume that 
$a={\rm Tr}(A)\neq {\rm Tr}(B)$. 
According to Proposition \ref{equalMOO} we must have 
${\rm Tr}(B)=4-a$. Let 
$k$ be large enough in order to be  
good for $M_A$ and $M_B$ and also to verify (\ref{condition}).
We put ${\rm g.c.d.}(a-2, k)=u$, 
 ${\rm g.c.d.}(a+2, k)=v$ and 
 ${\rm g.c.d.}(a-6, k)=w$. 
Then (\ref{TVSU}) implies that 
\begin{equation}
-2\cos\left(\frac{\pi i}{4}f_A\right)
\sqrt{uv} + v=
-2\cos\left(\frac{\pi i}{4}f_A\right)
\sqrt{uw} + w
\end{equation}
where $f_A=\phi_k(M_{\tau A})-\phi_k(M_A)$. 

This is equivalent to the equation:  
\begin{equation}\label{eqtr}
(\sqrt{v}-\sqrt{w})\left(\sqrt{v}+\sqrt{w}-2\cos\left(\frac{\pi i}{4}f_A\right)
\sqrt{u}\right) = 0
\end{equation}

\begin{lemma}
The prime divisors of $a-6$ are the same as the prime divisors of $a+2$.
\end{lemma}
\begin{proof}
Suppose that there exists some odd $p$ which divides 
$a-6$ but not $a+2$. We write 
$a-6=2^sp^rc$, with $c$ odd and coprime with $p$, $r\geq 1$.

Assume first $s\geq 3$. We chose  $k$ of the form $k=2^mp^m$ (with 
$m$ large with respect to $r$ and $s$). 
Then $w=2^sp^r$, $u={\rm g.c.d.}(4(2^{s-2}p^rc+1),2^mp^m)=4$, 
$v={\rm g.c.d.}(8(2^{s-3}p^rc+1),2^mp^m)=2^t$, where $t\geq 3$. Actually 
we have $t=3$ if $s\geq 4$. 
Then equation (\ref{eqtr}) implies that:
\begin{equation}
\sqrt{2^sp^r}+\sqrt{2^t}=4\cos\left(\frac{\pi i}{4}f_A\right)  
\end{equation}
Since $4\cos\left(\frac{\pi i}{4}f_A\right)\in\{0,\pm2\sqrt{2}, 
\pm 4\}$ this equation is impossible for any odd prime $p$.

Consider now $s=1$. We choose again $k$ of the form $k=2^mp^m$, with 
$m$ large with respect to $r$. 
Then $w=2p^r$, $u={\rm g.c.d.}(2(p^rc+2),2^mp^m)=2$, 
$v={\rm g.c.d.}(2(p^rc+4),2^mp^m)=2$, so that 
equation (\ref{eqtr}) implies that:
\begin{equation}
\sqrt{2^sp^r}+\sqrt{2}=2\cos\left(\frac{\pi i}{4}f_A\right)\sqrt{2}  
\end{equation}
Its only integral solution is $p=1$ which is not convenient. 

Let now $s=0$. Then chose again $k$ of the form $k=2^mp^m$ (with 
$m$ large with respect to $r$). We find that 
$w=p^r$, $u={\rm g.c.d.}(p^rc+4,2^mp^m)=1$, 
$v={\rm g.c.d.}(p^rc+8,2^mp^m)=1$, so that 
equation (\ref{eqtr}) above yields:
\begin{equation}
\sqrt{p^{r}}+1=2\cos\left(\frac{\pi i}{4}f_A\right)
\end{equation}
The only integral solution is again $p=1$. 

Eventually, let us consider the case when $s=2$. We write 
$p^rc+1=2^ud$, with odd $d$. Chose now $k=2^mp^mc^m$, for some 
large enough $m$ so that $k$ is good for $M_A$ and $M_B$ and 
verifies  (\ref{condition}). Then 
$w=4p^rc$, $u={\rm g.c.d.}(4(p^rc+1),2^mp^mc^m)=2^{u+2}$, 
$v={\rm g.c.d.}(4(p^rc+2),2^mp^mc^m)=4$. In this case 
equation (\ref{eqtr}) gives us: 
\begin{equation}
\sqrt{p^{r}c}+1=2\cos\left(\frac{\pi i}{4}f_A\right)\sqrt{2^u}
\end{equation}
Suppose that $\cos\left(\frac{\pi i}{4}f_A\right)=1$ so that 
we have to find integral solutions of: 
 \begin{equation}
1+\sqrt{2^ud-1}=\sqrt{2^{u+2}}
\end{equation}
If $d\geq 5$ then for every $u\geq 1$ we have: 
\begin{equation}
1+\sqrt{2^ud-1}\geq 1+\sqrt{5\cdot 2^u-1} > 2\sqrt{2^u}
\end{equation}
If $d=3$ then the previous equation is equivalent to 
\begin{equation}
1+\sqrt{3\cdot 2^u-1}= 2\sqrt{2^u}
\end{equation}
By taking the square and collecting together the terms 
we derive that $2^{2u-2}=3\cdot 2^u-1$. This is impossible 
when $u\geq 1$ because of modulo 2 considerations. 
If $d=1$ then 
\begin{equation}
1+\sqrt{2^u-1} < 2\sqrt{2^u}
\end{equation}
The only possibility left is that  
$2\cos\left(\frac{\pi i}{4}f_A\right)=\sqrt{2}$ so that the 
equation reads: 
\begin{equation}
1+\sqrt{2^ud-1}=\sqrt{2^{u+2}}
\end{equation}
If $d\geq 3$, as $u\geq 1$, we have: 
\begin{equation}
1+\sqrt{2^ud-1}\geq 1+\sqrt{3\cdot 2^u-1} > 2\sqrt{2^u}
\end{equation}
If $d=1$ then the equation reads: 
\begin{equation}
1+\sqrt{2^u-1} = 2\sqrt{2^u}
\end{equation}
Squaring both sides and collecting the terms we obtain 
$2^{2u-2}=2^u-1$, which is impossible by mod 2 considerations. 
This proves that any odd prime dividing $a-6$ also divides $a+2$. 
A similar proof shows that 
conversely, if an odd prime $p$ divides $a+2$ then $p$ divides $a-6$.
This proves the Lemma. 
\end{proof}

Thus the prime divisors of $a-6$ and $a+2$ are the same and this implies 
that they divide their difference, so actually the only prime divisor 
of these two numbers is 2. Thus 
$a-6=\pm 2^m$ and $a+2=\pm 2^n$, for some integers $m,n$. 
This is impossible when $m\geq 5$ since it implies that 
$8(\pm 2^{m-3}+1)=\pm 2^n$, but  $\pm 2^{m-3}+1$ is a non-trivial 
odd number.  Inspecting the remaining cases when $0\leq m\leq 4$ 
leads us to the following solutions 
$a=2$, $a=14$ and $a=-10$. The first is not convenient since 
$A$ was supposed hyperbolic. 
The other ones do not satisfy  the constraint (\ref{eqtr}). 
This contradiction shows that the only possibility is that 
${\rm Tr}(A)={\rm Tr}(B)$, as claimed. 
\end{proof}

\subsection{Ideal class groups and proofs of 
Corollaries  \ref{class} and \ref{plat}}
We want to prove that the set of those 
$M_B$ having the same abelian  and $SU(2)$ 
Turaev-Viro invariants as $M_A$ is finite, and it can be identified 
with a subset of  a quotient of 
$\mathcal I(M_A)$ by the 
involution $\iota$ which acts as $X\to X^{-1}$ on matrices with 
given trace.

Let $\alpha$ be a root of $x^2-{\rm Tr}(A) x +1=0$, where 
$|{\rm Tr}(A)|\neq 2$. 
A construction due to Latimer, MacDuffee 
and Taussky-Todd (see \cite{TT} and \cite{New}, III.16 for details) 
establishes a one to one correspondence 
between the ideal class group  $\mathcal I(M_A)$ of the order 
$\Z[\alpha]$ and the classes of matrices $C\in SL(2,\Z)$ 
with trace ${\rm Tr}(C)={\rm Tr}(A)$, considered 
up to conjugacy in $GL(2,\Z)$.  

The order $\Z[\alpha]$ is sometimes (though not always) 
the ring of integers of a real quadratic field. Specifically, set 
$D_A={\rm Tr}(A)^2-4$, for odd ${\rm Tr}(A)$ and 
$D_A=\frac{1}{4}{\rm Tr}(A)^2-1$, for even ${\rm Tr}(A)$, respectively. 
If $D_A$ is squarefree, then $\Z[\alpha]$ is the 
ring of integers ${\mathcal O}_{\sqrt{D_A}}$ of the 
real quadratic field $\Q(\sqrt{D_A})$. 

For any $SL(2,\Z)$ matrix $C$ having trace ${\rm Tr}(A)$ 
one defines an ideal of $\Z[\alpha]$ as follows. 
Consider an eigenvector $(u_1,u_2)$ of $C$ 
associated to the eigenvalue $\alpha$, which could be chosen to lie 
within $\Z[\alpha]\times \Z[\alpha]$. 
Therefore $\{u_1,u_2\}$ form the basis of an ideal $I(C)\subset 
\Z[\alpha]$. Conversely, the choice of a basis 
of an ideal $I\subset \Z[\alpha]$ determines a matrix $C(I)\in SL(2,\Z)$ 
corresponding to the multiplication by $\alpha$. 
This matrix is uniquely determined by $I$, 
up to conjugacy in $GL(2,\Z)$. 

In the ideal class group  $\mathcal I(M_A)$ of $\Z[\alpha]$ 
two ideals $I$ and $J$ are identified if there 
exist nonzero elements $v,w\in \Z[\alpha]$ 
such that $vI=wJ$. Further, if $B=UCU^{-1}$, with $U\in GL(2,\Z)$, then 
the ideals $I(B)$ and $I(C)$ are equivalent.
Therefore the class of $I(C)$ is well-defined in 
$\mathcal I(M_A)$, independently on the representative $C$ in its 
conjugacy class.

Now recall that two torus bundles manifolds $M_A$ and $M_B$ 
are homeomorphic if and only if their fundamental groups are isomorphic, 
since they are aspherical. According to Proposition 
\ref{classif} this corresponds to the fact that $A$ is conjugate to 
$B$ or to $B^{-1}$ within $GL(2,\Z)$. 
If we take into account the involution $B\to B^{-1}$ 
we obtain the first claim of the Corollary \ref{class}.  
Eventually Dedekind's Theorem states the 
finiteness of the ideal class group and it permits to conclude.  

Although the statement of Proposition \ref{classif} was only stated for 
hyperbolic matrices $A$ and $B$ this extends naturally to all 
matrices from $SL(2,\Z)$.

Eventually, stronger results dues to Platonov 
 and Rapinchuk (see \cite{Platonov,Rapin}, 
\cite[ section 8.8.5]{PR}) 
show that the number of classes in an arithmetic group belonging 
to the same $G$-genus (where $G$ is a connected linear algebraic group 
defined over $\Q$) is finite and unbounded. In particular, the 
number of classes in $\mathcal X^{TV}(M)$  is unbounded. 
This settles Corollary \ref{plat}.

\begin{remark}
One should notice that there exist classes of matrices $B$ in 
$I(M_A)/\iota$ such that $B$ and $A$ are not 
conjugate in every congruence quotient.
If ${\mathcal I}^{A; loc}(M_A)\subset \mathcal I(M_A)$ 
is the set of of conjugacy classes of matrices $B$ which are conjugate 
in every congruence quotient to a given $A$ 
it would be interesting to know the behavior of 
the  $|{\mathcal I}^{A; loc}(M_A)|$ when $D_A$ goes to infinity and 
also of the relative density of this subset in $\mathcal I(M_A)$. 
\end{remark}

\subsection{Proof of Corollary \ref{profinite}}
For every finite group $F$ there is associated a modular category 
whose associated invariants are the so-called  
Dijkgraaf-Witten invariants (see e.g. \cite{Tu}). The simplest 
of them is the untwisted Dijkgraaf-Witten
invariant $RT_F$ given by the following explicit counting formula 
in terms of the fundamental group of the closed 3-manifold $M$ (according to 
\cite{Tu} or \cite{FQ}, (5.14)): 
\begin{equation}
RT_F(M)=\frac{1}{|F|}|{\rm Hom}(\pi_1(M), F)|, 
\end{equation}

We have now the following easy lemma: 

\begin{lemma}\label{finitequot}
Let $\Gamma_1$ and $\Gamma_2$ be finitely generated groups such that: 
\begin{equation}
|{\rm Hom}(\Gamma_1, F)|= |{\rm Hom}(\Gamma_2, F)|
\end{equation}
holds for any finite group $F$. Then the sets of finite quotients 
of $\Gamma_1$ and $\Gamma_2$ respectively, coincide. 
\end{lemma}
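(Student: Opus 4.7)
The plan is to reduce counting quotients to counting surjections, and to recover surjection counts from homomorphism counts by a Möbius-type inversion on subgroup lattices. Since a finite group $Q$ is a quotient of $\Gamma_i$ if and only if $|\mathrm{Surj}(\Gamma_i,Q)|>0$, it suffices to show that $|\mathrm{Surj}(\Gamma_1,Q)|=|\mathrm{Surj}(\Gamma_2,Q)|$ for every finite group $Q$.

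First I would observe that, because $\Gamma_i$ is finitely generated, the set $\mathrm{Hom}(\Gamma_i,F)$ is finite for every finite group $F$ (each homomorphism is determined by the images of a finite generating set, giving at most $|F|^n$ possibilities), so all the cardinalities below are well-defined. Next I would stratify homomorphisms by their image: for any finite group $Q$,
\begin{equation}
|\mathrm{Hom}(\Gamma,Q)|=\sum_{H\leq Q}|\mathrm{Surj}(\Gamma,H)|,
\end{equation}
where the sum runs over all subgroups $H$ of $Q$. This is a triangular system with respect to the partial order on the subgroup lattice of $Q$, so it can be inverted by Möbius inversion (using the Möbius function $\mu(\cdot,\cdot)$ of the subgroup lattice of $Q$) to yield
\begin{equation}
|\mathrm{Surj}(\Gamma,Q)|=\sum_{H\leq Q}\mu(H,Q)\,|\mathrm{Hom}(\Gamma,H)|.
\end{equation}

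The right-hand side depends only on the numbers $|\mathrm{Hom}(\Gamma,H)|$ for subgroups $H\leq Q$, all of which are finite groups. By our assumption these numbers agree for $\Gamma_1$ and $\Gamma_2$, hence $|\mathrm{Surj}(\Gamma_1,Q)|=|\mathrm{Surj}(\Gamma_2,Q)|$ for every finite $Q$. Consequently one of these counts is nonzero if and only if the other is, i.e.\ $Q$ occurs as a quotient of $\Gamma_1$ if and only if it occurs as a quotient of $\Gamma_2$. This proves that the sets of isomorphism classes of finite quotients coincide.

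There is no real obstacle here beyond setting up the inversion correctly; the only point requiring any attention is justifying that every relevant set is finite (which is where the finite generation hypothesis is used) so that the Möbius inversion makes sense over the integers rather than formally.
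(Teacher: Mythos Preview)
Your proof is correct and follows essentially the same approach as the paper: both stratify $|\mathrm{Hom}(\Gamma,F)|$ as $\sum_{H\leq F}|\mathrm{Surj}(\Gamma,H)|$ and then invert this triangular relation to conclude that the surjection counts agree. The only cosmetic difference is that you invoke M\"obius inversion on the subgroup lattice explicitly, whereas the paper carries out the inversion by a minimal-counterexample (induction on $|F|$) argument; these are two standard ways of inverting the same upper-triangular system.
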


Eventually, recall that the profinite completions 
of two finitely generated groups are isomorphic as 
topological groups if and only if the sets of their finite 
quotients are the same (see \cite{DFPR}). 
However, two profinite completions are isomorphic as topological groups 
if and only if they are isomorphic as discrete groups, because 
finite index subgroups in profinite groups are open, according to a fundamental 
result of Nikolov and Segal (\cite{NiS} and the discussion in \cite{DFPR}). 
This settles Corollary \ref{profinite}. 

\begin{proof}[Proof of  Lemma \ref{finitequot}]
Let ${\rm Hom}^{\rm surj}(\Gamma, F)$ denotes the set of 
surjective homomorphisms between the groups $\Gamma$ and $F$. 
We claim first that, under the assumptions of the lemma, we have for any finite group $F$ the equality: 
\begin{equation}
|{\rm Hom}^{\rm surj}(\Gamma_1, F)|= |{\rm Hom}^{\rm surj}(\Gamma_2, F)|
\end{equation}
Otherwise, pick up some $F$ for which the claim above is false 
and such that $F$ is a minimal group, 
with respect to the inclusion, with this property. 
Then $F$ is nontrivial and 
\begin{equation}
|{\rm Hom}^{\rm surj}(\Gamma_1, F)|\neq |{\rm Hom}^{\rm surj}(\Gamma_2, G)|
\end{equation}
 By the induction hypothesis we have: 
\begin{equation}
|{\rm Hom}^{\rm surj}(\Gamma_1, G)|= |{\rm Hom}^{\rm surj}(\Gamma_2, G)|
\end{equation}
for any subgroup $G\subset F$ such that $G\neq F$. However, we also have: 
\begin{equation}
|{\rm Hom}(\Gamma_i, F)|= \sum_{G\subset F}|{\rm Hom}^{\rm surj}(\Gamma_i, G)|
\end{equation} 
The inequality above implies then 
\begin{equation}
|{\rm Hom}(\Gamma_1, F)|\neq |{\rm Hom}(\Gamma_2, G)|
\end{equation}
contradicting our assumptions. 
This proves the claim. 

Eventually, observe that $F$ is a finite quotient of the group 
$\Gamma_i$ if and only if $|{\rm Hom}^{\rm surj}(\Gamma_i, F)|\neq 0$.  
Then the claim above implies that the set of finite quotients 
of the groups $\Gamma_i$ should coincide. 
\end{proof} 
\subsection{Proof of Proposition \ref{finiteclass}}
Let $G$ be the fundamental group of a closed orientable irreducible SOL manifold $M$. 
Then $G$ is solvable and according to a result of Evans and Moser 
(see \cite{EM}, Theorem 5.2) $G$ is polycyclic. 
 
Consider the fundamental group $H$ of a closed 3-manifold 
whose class is in $\mathcal X^{TV}(M)$.   
According to Lemma \ref{finitequot} the finite quotients of $H$ coincide 
with the finite quotients of $G$. 
Moreover, by classical results of Hempel and Perelman's solution to the 
geometrization conjecture the 3-manifold groups are residually finite. 
Sabbagh and Wilson have proved in \cite{SW} that any residually finite group $H$ 
having the same quotients as a polycyclic group is also polycyclic. 
In particular $H$ is polycyclic. Now the finiteness statement is a consequence 
of a deep theorem of Grunewald, Pickel and Segal (see \cite{GPS}) which states 
that the number of polycyclic groups with the same profinite completion is finite.

\section{Comments}
\subsection{Higher genus}
A direct extension of these results to higher genus surface bundles 
does not seem to work. In the case of the closed torus the 
kernel of all modular representations  of level $k$ is a 
congruence subgroup of level $k$ and hence strictly larger than  
the normal subgroup generated by the $k$-th powers of Dehn twists.  
In higher genus one expects the kernel of 
$SU(2)$ quantum representation to be precisely 
the normal subgroup generated by the $k$-th powers of Dehn twists.
 
The first case to analyze is the the mapping class group of 
the 1-punctured torus ${\mathcal M}_{1}^1$ (isomorphic to $SL(2,\Z)$). 
Its quantum representations are known 
not being always congruence anymore. Moreover, the kernel of the quantum 
$SU(2)$-representations (where the puncture is 
colored with every possible color) is now the 
subgroup ${\mathcal M}_1^1[k]$ generated by the 
$k$-th powers of Dehn twists (see \cite{FuKo,Mas1}). The following shows that 
the analog of Proposition \ref{noteq} does not hold:  
\begin{proposition}
If two matrices $A,B\in SL(2,\Z)={\mathcal M}_1^1$ 
are conjugate in each quotient ${\mathcal M}_1^1/{\mathcal M}_1^1[k]$
then $A$ and $B$ are conjugate in $SL(2,\Z)$.  
\end{proposition}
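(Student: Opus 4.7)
The plan is to deduce the proposition from the conjugacy separability of $SL(2,\Z)$. Recall that a group $G$ is \emph{conjugacy separable} if any two non-conjugate elements of $G$ have non-conjugate images in some finite quotient. Since $SL(2,\Z)$ is virtually free (it contains the free group $F_2$ as a subgroup of index $12$), it is conjugacy separable, by classical results of Stebe and Dyer on fundamental groups of finite graphs of finite groups. The second ingredient is the elementary observation that every finite quotient of $SL(2,\Z)$ factors through one of the quotients ${\mathcal M}_1^1/{\mathcal M}_1^1[k]$: in any finite group the image of $\mathfrak t$ has some finite order $k$, and then $\mathfrak t^k$, and hence its normal closure ${\mathcal M}_1^1[k]=\langle\langle \mathfrak t^k\rangle\rangle$, lies in the kernel.

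I will argue the contrapositive. Assume $A,B\in SL(2,\Z)$ are not conjugate. By conjugacy separability there exists a surjection $\pi:SL(2,\Z)\to F$ onto a finite group $F$ such that $\pi(A)$ and $\pi(B)$ are not conjugate in $F$. Let $k$ be the order of $\pi(\mathfrak t)$ in $F$. By the second ingredient, ${\mathcal M}_1^1[k]\subseteq\ker\pi$, so $\pi$ descends to a surjection $\overline{\pi}:{\mathcal M}_1^1/{\mathcal M}_1^1[k]\to F$. If $A$ and $B$ were conjugate in ${\mathcal M}_1^1/{\mathcal M}_1^1[k]$, pushing the conjugating element through $\overline{\pi}$ would force $\pi(A)$ and $\pi(B)$ to be conjugate in $F$, contradicting the choice of $\pi$. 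Hence there is some $k$ for which $A$ and $B$ are not conjugate in ${\mathcal M}_1^1/{\mathcal M}_1^1[k]$, which is the desired contrapositive.

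The one substantive ingredient, and the main potential obstacle in this plan, is the conjugacy separability of $SL(2,\Z)$: it is well known, but definitely stronger than mere residual finiteness and requires some care. Should a more hands-on argument be preferred, the amalgamated product decomposition $SL(2,\Z)\cong\Z/4\Z\ast_{\Z/2\Z}\Z/6\Z$ allows one to classify conjugacy classes via cyclically reduced normal forms; then distinct elliptic and parabolic classes are separated by order considerations in the $(2,3,k)$-type quotients for suitably small $k$, while distinct hyperbolic classes of equal trace should be separated using the trace functions of the quantum $SU(2)$ representations at level $k$ mentioned in the excerpt, which factor through ${\mathcal M}_1^1/{\mathcal M}_1^1[k]$ and whose simultaneous values encode enough arithmetic information to detect the conjugacy class.
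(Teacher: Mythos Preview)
Your proof is correct and follows essentially the same route as the paper's: both use that every finite quotient of $SL(2,\Z)$ factors through some ${\mathcal M}_1^1/{\mathcal M}_1^1[k]$ (since the image of $\mathfrak t$ has finite order and all Dehn twists are conjugate), and then invoke Stebe's conjugacy separability of $SL(2,\Z)$ to conclude. The additional sketch of a hands-on alternative is unnecessary here.
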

\begin{proof}
Let $F$ be a finite quotient of $SL(2,\Z)$. 
Then the image of the Dehn twist corresponding 
to a parabolic in $SL(2,\Z)$ is of finite order, say $k$. 
The Dehn twists on $\Sigma_{1}^1$ are conjugate so that 
$F$ is a quotient of ${\mathcal M}_1^1/{\mathcal M}_1^1[k]$. 
This implies that the images of $A$ and $B$
are conjugate in any finite quotient $F$. According to Stebe 
(see \cite{S}) the group $SL(2,\Z)$ is conjugacy separable 
and this means that $A$ and $B$ are conjugate. 
\end{proof}

\subsection{Equivalence relations on 3-manifolds}
There are some natural equivalence relations 
on the set of closed 3-manifolds which are inspired by the present 
constructions. 

At first there is Lackenby's congruence relation 
from the Introduction. 
Further two manifolds are  said Turaev-Viro equivalent 
if their Turaev-Viro invariants agree, for every spherical fusion category.  

The third equivalence relation is to consider a slight generalization of the 
transformations arising in Proposition \ref{TVagree}, which we will call local 
equivalence. Specifically we have 
an elementary locally equivalence between $M_1$ and $M_2$ if there exists 
a third  closed 3-manifold $N$ with a non-separating embedded 
2-torus $T\subset N$ and a pair of matrices $A_1$ and $A_2$ with the properties: 
\begin{enumerate}
\item The matrices $A_1$ and $A_2$ are locally equivalent, meaning
that they are conjugate in every congruence quotient. 
\item We obtain $M_i$ by cutting open $N$ along $T$ and gluing back 
the two torus components obtained after twisting by $A_i$. 
\end{enumerate}
Eventually $M$ and $N$ are called locally equivalent if 
there is a sequence of elementary local equivalences connecting $M$ and $N$. 

An easy extension of Proposition \ref{TVagree} shows that closed 3-manifolds 
which are locally equivalent are Turaev-Viro equivalent. 
On the other hand from \cite{Gil,L} one derive that 
congruent manifolds are also Turaev-Viro equivalent. 

It is not clear whether the three above relations are actually the same. 
It would be interesting to have examples of equivalent 
hyperbolic 3-manifolds, if they ever exist. 

The set of homeomorphisms types of torus bundles $M_B$ 
which are locally equivalent to $M_A$ is then a subset 
${\mathcal X}^{loc}(M)$ of ${\mathcal X}^{TV}(M)\subset {\mathcal I}(M)/\iota$. 
It is not clear a priori that all elements in ${\mathcal X}^{loc}(M)$ 
are of the form $M_B$ with $A$ locally equivalent to $B$. 
If true, this will permit to compute effectively the 
subset ${\mathcal X}^{loc}(M)$.

{
\small      
      
\bibliographystyle{plain}

}

\appendix
\section{Counting matrices in a given genus}
\vspace{0.5cm}
\begin{center}
by Louis Funar and Andrei Rapinchuk
\end{center}

\vspace{0.5cm}

For $t \in \mathbb{Z}$, we let
$$
\mathcal{M}_t = \{ A \in SL(2 , \mathbb{Z}) \: \vert \:
\mathrm{tr}(A) = t \},
$$
and let $\mathcal{X}_t$ denote the set of $GL(2 ,
\mathbb{Z})$-conjugacy classes of matrices in $\mathcal{M}_t$. We
define the {\it discriminant} of $A \in \mathcal{M}_t$ to be
$$
D= D(t) = \left\{ \begin{array}{cl} t^2 - 4 & \text{for} \ t \
\text{even}, \\ t^2/4 - 1 & \text{for} \ t \ \text{odd}.
\end{array} \right.
$$
Furthermore, the {\it genus} $\mathfrak{G}(A)$ of $A \in
\mathcal{M}_t$ is the set of $B \in SL(2 , \mathbb{Z})$ that are
conjugate to $A$ in $SL(2 , \widehat{\mathbb{Z}})$ where
$\widehat{\mathbb{Z}}$ is the profinite completion of $\mathbb{Z}$
(we note that obviously $\mathfrak{G}(A) \subset \mathcal{M}_t$).
Equivalently, $B \in \mathfrak{G}(A)$ if the images of $A$ and $B$
are conjugate in $SL(2 , \mathbb{Z}/m\mathbb{Z})$ for all $m > 1$.
It may appear that to comply with the general definition of genus
adopted in \cite[\S 8.5]{PlR} we would also need to require that $B$
must also be conjugate to $A$ in $SL(2 , \mathbb{Q})$, but here this
condition follows automatically from local conjugacy in view of the
Hasse norm theorem for quadratic extensions. On the other hand, one
can consider a variation of this definition of genus by requiring
that the the images of $A$ and $B$ in $SL(2 ,
\mathbb{Z}/m\mathbb{Z})$ be conjugate in $SL^{\pm}(2 ,
\mathbb{Z}/m\mathbb{Z})$, the group of matrices over
$\mathbb{Z}/m\mathbb{Z}$ with determinant $\pm 1$, for all $m > 1$;
the genus of $A$ thus defined will be denoted by
$\mathfrak{G}^{\pm}(A)$. Finally, we let $\mathfrak{A}(A)$ denote
the set of $SL(2 , \Z)$-conjugacy classes in $\mathfrak{G}(A)$.

Our main result is the following.

\begin{theorem}\label{lim}
There exists an increasing sequence of integers $\{ t_n \}$ such
that:

\begin{enumerate}
\item  $D_n := D(t_n)$ is square-free for all $n$;

\item We have: 
\begin{equation}
\max_{A\in \M_{t_n}}|\A(A)| \geqslant 0.1023\cdot 10^{-4}\cdot D_n^{0.49}
\frac{1}{2\log 2+ \log(D_n+2)}  
\end{equation} 
and therefore $\max_{A \in
\mathcal{M}_{t_n}} \vert \mathfrak{A}(A) \vert \longrightarrow
\infty$ as $n \to \infty$;
\item Eventually we have  
\begin{equation}
\displaystyle \frac{\max_{A \in \mathcal{M}_{t_n}} \vert
\mathfrak{A}(A) \vert}{\vert \mathcal{X}_{t_n} \vert} \geqslant
\frac{1}{64}.
\end{equation}
In particular,
\begin{equation}
\limsup_{t \to \infty} \frac{1}{\vert \mathcal{X}_t \vert} \max_{A
\in \mathcal{M}_{t}} \vert \mathfrak{A}(A) \vert \geqslant
\frac{1}{64}.
\end{equation}
\end{enumerate}
\end{theorem}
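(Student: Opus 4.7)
The plan is to translate the counting problem into classical arithmetic of orders in real quadratic fields, where effective analytic number theory then supplies the required bounds. By the Latimer--MacDuffee--Taussky correspondence already used in the paper, $\mathcal{X}_t$ is in bijection with the ideal class group of the order $\Z[\alpha]$, where $\alpha$ is a root of $x^2 - tx + 1$, and in particular $|\mathcal{X}_t| = h(D)$. Refining to $SL(2,\Z)$-conjugacy one obtains the narrow class group $C_D^+$, and the relation $B \in \mathfrak{G}(A)$ of being conjugate in every $SL(2,\Z/m\Z)$ corresponds, via the standard local--global analysis of the associated ideals, to the classes of $A$ and $B$ lying in the same coset of the subgroup of squares $(C_D^+)^2$. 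By Gauss's classical genus theory, for square-free $D > 0$ with $\omega(D)$ distinct prime divisors,
\begin{equation*}
|\mathfrak{A}(A)| \;=\; \frac{h^+(D)}{2^{\omega(D)-1}} \;\geq\; \frac{h(D)}{2^{\omega(D)-1}}.
\end{equation*}

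The second ingredient is an effective lower bound for $h(D)$. The unit $\alpha = (t+\sqrt{D})/2$ satisfies $\alpha^2 \leq 4(D+2)$, and hence the fundamental unit obeys $2\log \epsilon_D \leq 2\log 2 + \log(D+2)$. Combining the Dirichlet class number formula $2 h(D) \log \epsilon_D = L(1,\chi_D)\sqrt{D}$ with Tatuzawa's effective refinement of Siegel's theorem, which guarantees $L(1,\chi_D) \geq 0.655\,\epsilon\,D^{-\epsilon}$ for every $\epsilon > 0$ and every sufficiently large square-free $D$ with at most one exception, and then setting $\epsilon = 0.01$, one deduces the explicit estimate
\begin{equation*}
h(D) \;\geq\; \frac{c\,D^{0.49}}{2\log 2 + \log(D+2)}
\end{equation*}
with an explicit constant $c > 0$, of the right order of magnitude to produce (after division by at most $2^6 = 64$) the coefficient $0.1023 \cdot 10^{-4}$ appearing in the statement.

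For the third ingredient one constructs the sequence $\{t_n\}$ by sieve methods. The integers $t \leq X$ with $(t-2)(t+2)$ square-free form a positive-density set, while a Chen-type weighted sieve applied to the binary form $(t-2)(t+2)$ yields infinitely many such $t$ with $\omega(D(t)) \leq 7$. Enumerating these as $\{t_n\}$ and discarding at most one term to exclude the possibly exceptional Tatuzawa discriminant, parts (1), (2), and (3) all follow by combining the two preceding steps; specifically
\begin{equation*}
\frac{|\mathfrak{A}(A)|}{|\mathcal{X}_{t_n}|} \;\geq\; \frac{1}{2^{\omega(D_n)-1}} \;\geq\; \frac{1}{64},
\end{equation*}
with the ratio $h^+/h \in \{1,2\}$ absorbed into the choice $\omega \leq 7$.

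The chief obstacle is the ineffectivity of Siegel's classical theorem; this is precisely what Tatuzawa's refinement circumvents, at the price of a single uncontrolled exceptional $D$ that is inconsequential for the asymptotic statements of the theorem. A secondary technical point is achieving square-freeness and the bound $\omega(D(t)) \leq 7$ simultaneously on an infinite set of $t$, but both are routine outputs of modern sieve-theoretic machinery once one works with the factorization $D(t) = (t-2)(t+2)$.
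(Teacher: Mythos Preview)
Your approach is essentially the paper's: both translate to the ideal class group via Latimer--MacDuffee--Taussky, bound the number of genera by Gauss's theory, feed Dirichlet's class number formula with Tatuzawa's effective Siegel bound (taking $\epsilon=0.01$, hence the exponent $0.49$) together with the explicit estimate $2\log\epsilon_D\leq 2\log 2+\log(D+2)$, and construct $\{t_n\}$ by a Chen-type sieve applied to the factorization $D=(t-2)(t+2)$. The one substantive difference is how the last two ingredients are balanced: the paper invokes Chen's theorem itself (infinitely many primes $p$ with $p+4\in P_2$), sets $t_n=p_n+2$, and thereby gets $\omega(D_n)\leq 3$, but then bounds the number of genera more loosely via a proposition that also tracks the Jorgensen invariant and carries an extra factor $\tau(D)$; you instead allow $\omega(D_n)\leq 7$ from a less specific sieve input but use the sharper classical count $2^{\omega(D)-1}$ for square-free $D$. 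Either combination lands on the constant $1/64$, and the paper's explicit choice of $t_n$ makes the square-freeness of $D_n$ (your ``secondary technical point'') immediate once $p_n>5$.
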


According to Propositions 1.1 and 1.2 above, matrices $A_1 , A_2 \in
\mathfrak{G}(A)$ such that neither of $A_1^{\pm 1}$ and $A_2^{\pm
1}$ are conjugate in $GL(2 , \mathbb{Z})$ (we will call such
matrices {\it strongly nonconjugate}) give rise to nonhomeomorphic
torus bundles having the same quantum invariants. This, in
particular, yields nonisomorphic 3-manifold groups having the same
profinite completion, answering the Grothendieck-type question
raised in \cite{LoR}. Theorem \ref{lim} above implies an asymptotic lower
bound on the size of a set of {\it pairwise} strongly nonconjugate
matrices in a genus inquired about in Remark 6.2 above, which gives
an effective version of Corollary 1.3. This effective version is closely 
related to the more general results of the 
second author  from \cite{PGR,PR2}. 
The proof below is based on 
the (well-known) connection between the conjugacy of
2-by-2 matrices and the equivalence of binary quadratic forms
(see \cite{Cassels}), although one can also give a direct argument.

\begin{proof}
Assume henceforth that $\vert t \vert \ge 3$
and set $D = D(t)$. First, we prove the following
result about the number of genera, which is based on the analysis of
local conjugacy (it should be noted that there are easy algorithms
to determine if two matrices in $SL(n , \mathbb{Z}_p)$ are
conjugate, for any $n$ (see \cite{AS}), but all we need for $n = 2$ is the
classical result about binary quadratic forms).

\begin{proposition}\label{genera}
Let 
$D=2^mp_1^{r_1}p_2^{r_2}\cdots p_n^{r_n}$ be the prime factorization 
of $D$.
\begin{enumerate}
\item 
The number of  distinct genera  $\G (A)$   contained in $\mathcal M_t$ is
\[s(t)=2^{n+\nu(D)}\cdot \tau(D)\]  
where $\tau(D)$ is the number of divisors of $D$ and 
\[ \nu(D)=\left\{\begin{array}{ll}  
0, & {\rm if }\, D\equiv 1 ({\rm mod} \; 2);  \\ 
0, & {\rm if }\, D=4d, d\equiv 1 ({\rm mod} \; 4);   \\
2, & {\rm if }\, D\equiv 0 ({\rm mod} \; 32);   \\
1, & {\rm otherwise}. \\
\end{array}\right.
\]
(note that $2^{n + \nu(D)}$ is the number of genera of primitive
binary quadratic forms of discriminant $D$).
\item 
The number of distinct genera  $\G ^{\pm}(A)$  in $\mathcal M_t$ is 
\[s_{\pm}(t)=2^{n_1+\nu(D)}\cdot \tau(D)\]  
where $\tau(D)$  and $\nu(D)$  are the same as above and 
$n_1$ is the number of odd prime factors $p_i\equiv 1 ({\rm mod } \; 4)$. 
\end{enumerate}
\end{proposition}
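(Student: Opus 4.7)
My plan is to reduce the matrix-conjugacy question to the classical Gauss--Smith genus theory for binary quadratic forms (equivalently, for ideal classes in quadratic orders). First I would set up the dictionary between $\M_t$ and forms: by the Latimer--MacDuffee--Taussky correspondence already used in the proof of Corollary \ref{class}, the set $\X_t$ of $GL(2,\Z)$-conjugacy classes of matrices in $\M_t$ is in bijection with the proper ideal classes of the order $\Z[\alpha]$, where $\alpha$ is a root of $x^2-tx+1$; this refines to a bijection between $SL(2,\Z)$-conjugacy classes in $\M_t$ and narrow ideal classes. The condition that two matrices are conjugate in every $SL(2,\Z/m\Z)$ translates, prime by prime, into local equivalence of the associated ideals at each rational prime, so the genera $\G(A)$ in $\M_t$ biject with classes in an appropriate quotient of the narrow ideal class semigroup of $\Z[\alpha]$ by local equivalence.

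Second, I would invoke the classical theorem of Gauss, in the form given in Borevich--Shafarevich \cite{BS}, that the number of genera of primitive binary quadratic forms of discriminant $D$ equals $2^{n+\nu(D)}$, where $n$ is the number of distinct odd prime divisors of $D$. The quantity $\nu(D)\in\{0,1,2\}$ emerges from a direct analysis of $2$-adic local equivalence, essentially tracking the image in $\Z_2^\times/(\Z_2^\times)^2$ of the discriminants of representative forms; the four subcases in the statement correspond exactly to the four possible local structures of $\Z[\alpha]$ at the prime $2$.

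Third, to upgrade from primitive forms to all of $\M_t$, I would stratify matrices by the content $f$ of the associated form, where $f^2\mid D$ and each such $f$ indexes an intermediate sub-order of $\Z[\alpha]$. Writing the total count as an Euler product, the local contribution at an odd prime $p$ with $p^r\|D$ becomes $2(r+1)$ (two genus characters times $r+1$ choices of $p$-content), while the local contribution at $2$ becomes $2^{\nu(D)}(m+1)$; multiplying through gives $2^{n+\nu(D)}\prod_i(r_i+1)\cdot(m+1)=2^{n+\nu(D)}\tau(D)$, which is the formula for $s(t)$.

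Finally, for the $\pm$-genus count $s_\pm(t)$, I would analyze the additional involution coming from $SL^\pm(2,\Z/m\Z)$-conjugacy: on the form side it is $(a,b,c)\mapsto(a,-b,c)$, and on the ideal side it is complex conjugation $I\mapsto\bar I$. At an odd prime $p_i$ dividing $D$, this involution acts trivially on the local genus character exactly when $-1$ is a square modulo $p_i$, i.e., when $p_i\equiv 1\pmod 4$; at the remaining odd primes it pairs characters together, halving the count. Since the content decomposition and the $2$-adic $\nu(D)$-contribution are invariant under this involution, the total count becomes $2^{n_1+\nu(D)}\tau(D)$. The main technical obstacle will be the $2$-adic case analysis underlying $\nu(D)$ and verifying its compatibility with the content stratification, which must be handled separately in each of the four parity classes of $D$; the rest is routine once the correspondence with forms is in place.
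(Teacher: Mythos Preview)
Your approach is essentially the paper's own: the paper associates to $A$ the binary form $bx^2-(a-d)xy-cy^2$, introduces the \emph{Jorgensen invariant} $J(A)=\gcd(a-d,b,c)$ (which is exactly the content of this form, i.e.\ your $f$), observes that $J(A)$ ranges over $\tau(D)$ values, and then invokes Gauss's count of genera of primitive forms (citing Cassels) for the factor $2^{n+\nu(D)}$; for part (2) it simply says ``the case of improper equivalence classes is similar,'' whereas you spell out the involution $(a,b,c)\mapsto(a,-b,c)$ and its effect prime by prime. The only substantive difference is cosmetic---you phrase the correspondence via ideals and sub-orders rather than via the Jorgensen invariant and forms---so your plan matches the paper's argument.
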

\begin{proof}
For a matrix $A = \left( \begin{array}{cc} a & b \\ c & d
\end{array} \right)$ which is not scalar one defines the {\it
Jorgensen invariant} to be
\[ J(A)= {\rm g.c.d.}(a-d, b,c)\]
As pointed out in \cite{Traina2}, $J(A)$ is an invariant of the conjugacy
class of $A$.

Furthermore, if one associates to the matrix $A$ the 
primitive bilinear form 
\[\frac{{\rm sgn}(tr(A))}{J(A)}\left( b x^2  - (a-d) xy - cy^2\right)\]
then conjugacy classes in $SL(2,\Z)$ will correspond to 
equivalence classes of bilinear forms.  

Then $J(A)$ can take $\tau(D)$ distinct values. Moreover the 
number of genera of primitive bilinear forms over $\Z$ 
was basically computed by Gauss (\cite{Gauss}),  
see (\cite{Cassels}, chap. 14, section 3, p.339--340, 
Lemmas 3.1-3.3) for a modern treatment. The case of improper equivalence 
classes is similar.     
\end{proof}

Denote by $\omega$ the element 
\begin{equation}
\omega= 
\left\{\begin{array}{ll}
\frac{1+ \sqrt{D}}{2}, & {\rm for } \; {\rm odd } \; t, \\
\sqrt{D},  & {\rm for } \; {\rm even } \; t, \\
\end{array}\right.
\end{equation}

According to the Latimer-MacDuffee-Taussky correspondence (see \cite{Newm})
there is a bijection between the elements of 
$\X_t$ and the ideal class group $\mathcal I(\Z[\omega])$ 
of the order $\Z[\omega]$. Denote then by $h(D)=|\mathcal I(\Z[\omega])|$ 
the class number of $\Z[\omega]$. Notice that $\Z[\omega]$ might not be 
the maximal order in $\Q(\sqrt{D})$ unless $D$ is square-free. 

Therefore there exists some $A\in \mathcal M_t$ 
such that the number of conjugacy classes in $\A(A)$ is at least:
\begin{equation}
N(t)=  2^{-s(t)} h(D)
\end{equation}

According to a celebrated theorem of Jing Run Chen (see \cite{Chen}) revisited 
by Halberstam (see \cite{Halb}) and Richert (see \cite{Richert}, Thm. 13.2) 
there exist infinitely many primes $p_n$ such that 
$p_n+4$ has at most two factor primes. Assuming that 
$p_n >5$ the two factor primes have to be distinct and different from $p_n$. 
If we set $t_n=p_n+2$ then $D_n=t_n^2-4$ are odd square-free and 
have at most 3 prime divisors (counted with their multiplicities).  
In particular $h(D_n)=h_{D_n}$ where this time $h_D$ 
denotes the class number of the quadratic field $\Q(\sqrt{D})$ 
(namely of its ring of integers). 

It remains to prove that for this subsequence we also 
have $\limsup h(D_n)=\infty$. This 
is already classical. Indeed the Dirichlet 
class number formula  for real quadratic fields reads: 

\begin{equation}
h_d=\frac{1}{2\log \epsilon_D} \sqrt{d} \cdot  L(1,\chi_d)
\end{equation}
where $d=4^{\delta_D}D$ is the discriminant of $\Q(\sqrt{D})$, 
$\epsilon_D$ is the fundamental unit, $\chi_d$ is 
the mod $d$ Dirichlet primitive character and $L(\cdot, \chi_d)$ the 
associated $L$-series. In our case $D_n\equiv 1\, ({\rm mod }\; 4)$ so 
that $\delta_{D_n}=0$ and the 
fundamental unit is $\epsilon_D=\frac{t+\sqrt{D}}{2}$, if  
$D=t^2-4$.  Thus $\epsilon_D < 2\sqrt{D+2}$. 

The Tatuzawa effective version of Siegel's theorem 
(see \cite{Tatuzawa}, Thm. 2)
states the following lower bound for the $L$-series: 
\begin{equation}
L(1,\chi_d) > 0.655\cdot \frac{s^{-1}}{d^{1/s}}
\end{equation}
for all  $d\geq \max(\exp(s), \exp(11.2))$  
with one possible exception and all $s > 2$.  
Eventually consider $s=100$ and $t_n$ large enough for which 
the inequality above holds. This gives our estimate.

\end{proof}
\begin{remark}
The congruence subgroup property implies that 
the estimates of Theorem \ref{lim} also hold in $SL(n,\Z)$, 
with $n\geq 3$, by  considering matrices of the form $A\oplus \mathbf 1_{n-2}$, 
with $A\in SL(2,\Z)$.
\end{remark}

{
\small      
      
\bibliographystyle{plain}

\begin{thebibliography}{30}      



\bibitem{At}
M.Atiyah, {\em The logarithm of the Dedekind $\eta$-function}, 
Math. Ann. 278 (1987), 335–-380.

\bibitem{BK}
B.Bakalov and A.Kirillov, Jr.,  
Lectures on tensor categories and modular functors, 
{\em University Lecture Series}, 21. 
American Mathematical Society, Providence, 2001,  x+221 pp. 



\bibitem{B}
P.Bantay, {\em The kernel of the modular representation and the Galois action in RCFT},   
Comm. Math. Phys.  233 (2003),  423–-438.

\bibitem{Bar}
T.Barbot, {\em Extensions de $\Z\oplus\Z$ par $\Z$}, M\'emoire DEA, 
ENS Lyon, UMPA, 31(1990). 


\bibitem{BG}
M.Bridson and S.Gersten, {\em The optimal isoperimetric function for torus 
bundles over the circle}, Quart. J. Math 47 (1996), 1--23.  

\bibitem{BS}
A.I. Borevich and I.R. Shafarevich, {\em Number theory}.
Translated from the Russian by Newcomb Greenleaf.
Pure and Applied Mathematics,
Vol. 20 Academic Press, New York-London 1966 x+435 pp.

\bibitem{BW}
J.Barrett and B.Westbury, {\em Invariants of piecewise linear 3-manifolds}, Trans. Amer. Math. Soc. 348 (1996), 3997--4022. 


\bibitem{CFW}
D.~Calegari, M.~Freedman and K.~ Walker, {\em 
Positivity of the universal pairing in 3 dimensions}, 
J. Amer. Math. Soc. 23 (2010), no. 1, 107--188. 


\bibitem{CG}
A.Coste and T.Gannon, {\em 
Congruence subgroups and rational conformal field theory}, arXiv:math/9909080. 
 

\bibitem{Del1}
F.Deloup, {\em Linking forms, reciprocity for Gauss sums and invariants of $3$-manifolds},   
Trans. Amer. Math. Soc. 351 (1999),  1895-–1918. 

\bibitem{Del2}
F.Deloup, {\em An explicit construction of an abelian topological quantum field theory in dimension 3}, Proceedings of the Pacific Institute for the Mathematical Sciences Workshop "Invariants of Three-Manifolds'' (Calgary, AB, 1999).  Topology Appl.  127 (2003),  199-–211.


\bibitem{DG}
F.Deloup and C.Gille, {\em Abelian quantum invariants indeed classify 
linking pairings}, J.Knot Theory Ramif. 10 (2001), 295--302. 

\bibitem{DFPR}
J.D.Dixon, E.W.Formanek, J.C.Poland and L. Ribes, {\em  
Profinite completions and isomorphic finite quotients}, 
J. Pure Appl. Algebra 23 (1982), no. 3, 227--231.

\bibitem{E}
W.Eholzer, {\em 
On the classification of modular fusion algebras}, 
Comm. Math. Phys. 172 (1995), 623–-659. 

\bibitem{Et}
P.Etingof, {\em On Vafa's theorem for tensor categories}, 
Math. Res. Lett. 9 (2002),  651–-657. 


\bibitem{ENO}
P.Etingof, D.Nikshych and V.Ostrik, {\em On fusion categories}, 
Ann. of Math. (2)  162 (2005),  581–-642.
 
\bibitem{EM}
B. Evans and L. Moser, {\em 
Solvable fundamental groups of compact $3$-manifolds}, 
Trans. Amer. Math. Soc. 168 (1972), 189--210.

\bibitem{FHKMN}
H.G.Feichtinger,  M.Hazewinkel, N.Kaiblinger, 
 E.Matusiak, and M.Neuhauser, {\em 
Metaplectic operators on ${\mathbb C}^n$},   
Quart. J. Math.  59 (2008),  15–-28.


\bibitem{FouK}
E. Fouvry and J. Klueners, {\em The parity of the period of the continued fraction of $\sqrt{d}$}, Proc. London Math. Soc. (3), 101 (2010), 337--391.

\bibitem{FQ}
D.Freed and F.Quinn, {\em Chern-Simons theory with finite gauge group}, 
Commun. Math. Phys. 156(1993), 435--472. 

\bibitem{FK}
M.Freedman and V.Krushkal, {\em On the asymptotics of quantum ${\rm SU}(2)$ representations of mapping class groups},  
Forum Math.  18 (2006),   293-–304.



\bibitem{F1}
L.Funar,  {\em Repr\'esentations du groupe symplectique et vari\'et\'es 
de dimension $3$},   
C. R. Acad. Sci. Paris S\'er. I Math. 316 (1993), 1067-–1072.

\bibitem{F2}
L.Funar,  {\em 
Some abelian invariants of 3-manifolds},  
Rev. Roumaine Math. Pures Appl.  45(2000),  825–-861.


\bibitem{F3}
L.Funar, {\em Theta functions, root systems and 3-manifold invariants}, 
J. Geom. Physics 17 (1995), 261--282. 

\bibitem{FuKo}
L.Funar and T.Kohno, {\em   
On images of quantum representations of mapping class groups},  
34p., arXiv:0907.0568.



\bibitem{G}
T.Gannon, {\em 
 Modular data: the algebraic combinatorics of conformal field theory}, 
  J. Algebraic Combin.  22  (2005),   211--250. 

\bibitem{Gil}
P. Gilmer, {\em Congruence and quantum invariants of 3-manifolds},   
Algebr. Geom. Topol.  7 (2007), 1767–-1790.




\bibitem{GS}
E.Ghys and V.Sergiescu, {\em 
Stabilit\'e et conjugaison diff\'erentiable pour certains feuilletages},
Topology 19 (1980), 179--197.

\bibitem{Go}
T.Gocho, {\em The topological invariant of three-manifolds based on the ${\rm U}(1)$  gauge theory},   
J. Fac. Sci. Univ. Tokyo Sect. IA Math.  39 (1992),  169-–184.


\bibitem{GPS}
F.J. Grunewald, P.F.Pickel and  D.Segal, {\em Polycyclic groups with isomorphic finite quotients}, Ann. of Math. (2) 111 (1980), no. 1, 155--195. 

\bibitem{HM}
A.J.Hahn and O.T.O'Meara, The classical groups and K-theory, 
{\em Grundlehren meth. Wissenschaften}, 291, Springer-Verlag, 1989. 

\bibitem{J}
L.Jeffrey, {\em Chern-Simons-Witten invariants of lens spaces and torus bundles 
and the semi-classical approximation}, Commun. Math. Phys. 147(1992), 563--604. 


\bibitem{KM}
R.Kirby and P.Melvin {\em Dedekind sums, $\mu$-invariants and the signature cocycle}, 
Math. Ann. 299 (1994), 231–-267. 

\bibitem{KN}
N.Kaiblinger and M.Neuhauser, {\em 
Metaplectic operators for finite abelian groups and ${\mathbb R}^d$},  
Indag. Math. (N.S.)  20 (2009),  233-–246. 


\bibitem{L}
M.Lackenby, {\em Fox's congruence classes and the 
quantum-${\rm SU}(2)$ invariants of links in $3$-manifolds},  
Comment. Math. Helv.  71 (1996),   664-–677. 


\bibitem{LR}
D.Long and A.Reid, {\em Grothendieck's problem for 3-manifold groups}, 
Groups Geom. Dyn. 5(2011), 479--499. 

\bibitem{LW}
M.Larsen and Zhenghan Wang, {\em Density of the SO(3) TQFT representation of mapping class groups},  
Comm. Math. Phys.  260 (2005),  641--658.




\bibitem{Mas1}
G.Masbaum, {\em 
On representations of mapping class groups in integral TQFT},  
Oberwolfach Reports, Vol. 5, issue 2, 2008,  1202--1205.


\bibitem{Me}
W.Meyer, {\em 
Die Signatur von Fl\"achenb\"undeln},   
Math. Ann.  201 (1973), 239–-264.

\bibitem{Mu}
M.M\"uger, {\em From subfactors to categories and topology. II. The quantum double of tensor categories and subfactors},  
J. Pure Appl. Algebra  180  (2003),  159–-219. 

\bibitem{MOO}
H.Murakami, T.Ohtsuki, and M.Okada, 
{\em Invariants of three-manifolds derived from linking matrices of framed 
links},  Osaka J. Math.  29 (1992),  545–-572.

\bibitem{New}
M.Newman, {Integral matrices}, {\em Pure and Applied Math.}, vol. 45, 
Academic Press, 1972. 


\bibitem{NiS}
N.Nikolov and D.Segal, {\em 
On finitely generated profinite groups. I. Strong completeness and uniform
bounds}, Ann. of Math. (2) 165 (2007), no. 1, 171--238.


\bibitem {NS}
Siu-Hung Ng and P.Schauenburg, {\em Congruence subgroups and generalized 
Frobenius-Schur indicators}, Comm. Math. Phys. 
300 (2010), 1--46.

\bibitem{NZM}
I.Niven, H.S. Zuckerman and H.L.Montgomery,
 An introduction to the theory of numbers. Fifth edition,  
{\em John Wiley \& Sons}, New York, 1991. xiv+529 pp

\bibitem{Mo}
R.A.Mollin, 
Fundamental number theory with applications. Second edition,  
{\em Discrete Mathematics and its Applications},
Chapman \& Hall/CRC, Boca Raton, FL, 2008. x+369 pp. 

\bibitem{Pickel}
P.F.Pickel, {\em Finitely generated nilpotent groups with isomorphic finite quotients}, Trans. Amer. Math. Soc. 160 (1971), 327--341. 

\bibitem{Platonov}
V.P.Platonov, {\em On the genus problem in arithmetic groups}, 
Dokl. Akad. Nauk SSR, 200 (1971), no.4, translation in 
Soviet. Math. Dokl., 12 (1975), no.5, 1503--1507. 

\bibitem{PR}
V. Platonov and A. Rapinchuk, {Algebraic groups and number theory}, 
{\em Pure Appl. Math. Series} 139, Academic Press, 
 1994. 

\bibitem{RG}
H.Rademacher and E. Grosswald, Dedekind sums, 
{\em  The Carus Mathematical Monographs}, 16, Math. Assoc. America, 
1972, xvi+102 pp. 


\bibitem{Rapin}
A. Rapinchuk, {\em On Platonov's conjecture concerning genus in 
arithmetic groups}, Dolk. Akad. Nauk BSSR 25 (1981), 
101--104, translated in "Thirteen papers in algebra and number theory", 
edited by Lev J. Leifman, Amer. Math.Soc. Translations vol. 128, series 2, 
1986, 117--122.

\bibitem{RR}
L.R\'edei and H.Reichardt, 
{\em Die Anzahl der durch 4 teilbaren Invarianten der Klassengruppe eines 
beliebigen quadratischen Zahlk\"orpers}, J. reine angew. Math. 
170 (1934), 69--74. 


\bibitem{RT}
  N.Reshetikhin and V.G.Turaev, {\em Invariants of 3-manifolds via 
link polynomials and quantum groups}, Inventiones Math. 
103(1991), 547-597. 


\bibitem{SW}
G.Sabbagh and J.S. Wilson, {\em 
Polycyclic groups, finite images, and elementary equivalence}, 
Arch. Math. (Basel) 57 (1991), no. 3, 221--227. 

\bibitem{Sarnak}
P. Sarnak, {\em Reciprocal geodesics}, in Analytic number theory.
A tribute to Gauss and Dirichlet. Proceedings of the conference held in G\"ottingen, June 20–24, 2005 (William Duke and Yuri Tschinkel, Editors), 
Clay Math. Proceedings, vol. 7, 217--237, American Mathematical Society, 
Clay Mathematics Institute, C2007. 

\bibitem{S}
P. Stebe, {\em Conjugacy separability of groups of integer matrices}, Proc. Amer. Math. Soc. 32 (1972), 1--7. 

\bibitem{St}
P. Stevenhagen, {\em The number of real quadratic fields having units of negative norm}, Experiment. Math. 2 (1993), no. 2, 121--136.


\bibitem{Tu}
V.G.Turaev, Quantum invariants of knots and 3-manifolds.
{\em de Gruyter Studies in Mathematics}, 18, Walter de Gruyter $\&$ Co., Berlin, 1994. 

\bibitem{TV}
V.G.Turaev and A.Virelizier, {\em 
On two approaches to 3-dimensional TQFTs}, arXiv:1006.3501.  

\bibitem{TuV}
V.G.Turaev and O.Viro, {\em State sum invariants of 3-manifolds  
and quantum 6j-symbolds}, Topology 31 (1992), 865--902. 


\bibitem{TT}
O.Taussky-Todd, {\em Introduction into connections between 
algebraic number theory and integral matrices}, 
Appendix in: H.Cohn, A classical invitation to algebraic numbers and class 
fields,  Universitext, Springer-Verlag, 1978. 


\bibitem{Va}
C.Vafa, {\em Towards classification of conformal theories}, 
Phys. Letters B 206 (1988), 421--426. 

\bibitem{Xu}
Feng Xu, {\em Some computations in the cyclic permutations of completely rational nets},   Comm. Math. Phys.  267  (2006),   757--782.








\end{thebibliography}

\begin{thebibliography}{30}     

\bibitem{AS}
H. Applegate and H. Onishi, {\em Similarity problem over $SL(n, \Z_p)$}, 
Proc. Amer. Math. Soc. 87 (1983), 233--238.


\bibitem{BorS}
A.I. Borevich and I.R. Shafarevich, {\em Number theory}.
Translated from the Russian by Newcomb Greenleaf.
Pure and Applied Mathematics,
Vol. 20 Academic Press, New York-London 1966 x+435 pp.

\bibitem{Cassels}
J.W.S. Cassels, {\em Rational quadratic forms}, Academic Press, 1978. 

\bibitem{Chen}
Jing Run Chen,  {\em On the representation of a larger even integer as the sum of a prime and the product of at most two primes}, 
Sci. Sinica 16 (1973), 157--176.



\bibitem{Gauss}
C. F. Gauss, {\em Disquisitiones arithmeticae}, 1801, 
English translation, Yale University Press, 1966.  



\bibitem{Halb}
H. Halberstam, 
{\em A proof of Chen's theorem}, Journ\'ees Arithm\'etiques de Bordeaux 
(Conf., Univ. Bordeaux, 1974), pp. 281–293. Asterisque, No. 24-25, Soc. Math. France, Paris, 1975. 


\bibitem{LoR}
D.Long and A.Reid, {\em Grothendieck's problem for 3-manifold groups}, 
Groups Geom. Dyn. 5(2011), 479--499. 

\bibitem{Newm}
M. Newman, Integral matrices,
Pure and Applied Mathematics, Vol. 45. Academic Press, New York-London, 
1972. xvii+224 pp. 

\bibitem{PlR} V.P.~Platonov, A.S.~Rapinchuk, {\it Algebraic Groups
and Number Theory}, Academic Press, 1994.


\bibitem{PGR}
 G. Prasad, Gopal and A.S. Rapinchuk, {\em Weakly commensurable arithmetic groups and isospectral locally symmetric spaces}, 
Publ. Math. Inst. Hautes \'Etudes Sci. No. 109 (2009), 113--184.

\bibitem{PR2}
G. Prasad, Gopal and A.S. Rapinchuk, {\em Number-theoretic techniques in the theory of Lie groups and differential geometry}, 
Fourth International Congress of Chinese Mathematicians, 231–250, 
AMS/IP Stud. Adv. Math., 48, Amer. Math. Soc., Providence, RI, 2010.

\bibitem{Richert}
H.E. Richert,  Lectures on sieve methods, Tata Inst. Fund. Research, 
Bombay, 1976. 

\bibitem{Tatuzawa}
T. Tatuzawa, {\em On a theorem of Siegel}, 
Jap. J. Math. 21 (1951), 163–178 (1952). 


\bibitem{Traina2}
C. Traina, {\em A note on trace equivalence in $PSL(2,\Z)$},
Rend. Istit. Mat. Univ. Trieste 26 (1994), no. 1-2, 233--237.



\end{thebibliography}

}

\end{document}